\tikzset{ modal/.style={>=stealth,shorten >=1pt,shorten <=1pt,auto,node distance=1.2cm, semithick}, world/.style={ }, point/.style={circle,draw,inner sep=0.5mm,fill=black}, reflexive above/.style={->,loop,looseness=7,in=120,out=60}, reflexive below/.style={->,loop,looseness=7,in=240,out=300}, reflexive left/.style={->,loop,looseness=7,in=150,out=210}, reflexive right/.style={->,loop,looseness=7,in=30,out=330}, itria/.style={draw,shape border uses incircle, isosceles triangle,shape border rotate=270,yshift=-1.45cm, minimum height = 18mm} }
\newcommand{\proofs}{\vdash}
\newcommand{\impl}{\rightarrow}
\newcommand{\Impl}{\Rightarrow}
\newcommand{\calM}{\mathcal{M}}
\newcommand{\calK}{\mathcal{K}}
\newcommand{\calL}{\mathcal{L}}
\newcommand{\calX}{\mathcal{X}}
\newcommand{\calH}{\mathcal{H}}
\newcommand{\HA}{{\sf HA}}
\newcommand{\IPC}{{\sf IPC}}
\newcommand{\K}{{\sf K}}
\newcommand{\GL}{{\sf GL}}
\newcommand{\iK}{{\sf iK}_\Box}
\newcommand{\iKD}{{\sf iKD}_\Box}
\newcommand{\iGL}{{\sf iGL}_\Box}
\newcommand{\iSL}{{\sf iSL}_\Box}
\newcommand{\LKGL}{{\sf G3GL}}
\newcommand{\LJ}{{\sf G3ip}}
\newcommand{\LJm}{{\sf G3im}}
\newcommand{\LJX}{{\sf G3iX}_\Box}
\newcommand{\LJK}{{\sf G3iK}_\Box}
\newcommand{\LJKD}{{\sf G3iKD}_\Box}
\newcommand{\LJGL}{{\sf G3iGL}_\Box}
\newcommand{\LJGLC}{{\sf G3iGLC}_\Box}
\newcommand{\LJSL}{{{\sf G3iSL}_\Box}}
\newcommand{\LJSLvier}{{{\sf G3iSL}^{4}_\Box}}
\newcommand{\LJSLa}{{{\sf G3iSL}^{a}_\Box}}
\newcommand{\DY}{{\sf G4ip}}
\newcommand{\DYm}{{\sf G4im}}
\newcommand{\DYK}{{\sf G4iK}_\Box}
\newcommand{\DYKD}{{\sf G4iKD}_\Box}
\newcommand{\DYGL}{{\sf G4iGL}_\Box}
\newcommand{\DYSL}{{{\sf G4iSL}_\Box}}
\newcommand{\DYSLprime}{{{\sf G4iSL}'_\Box}}
\newcommand{\DYSLa}{{{\sf G4iSL}^a_\Box}}
\newcommand{\G}{{\sf G}}
\newcommand{\iHSL}{{\sf iHSL}_\Box}
\newcommand{\iHGLC}{{\sf iHGLC}_\Box}
\newcommand{\rsch}{{\EuScript R}}
\newcommand{\lgc}{{\sf L}}
\newcommand{\lang}{\ensuremath {{\EuScript L}}}
\newcommand{\defn}{\equiv _{\mbox{\em \tiny df}}} 
\newcommand{\af}{\vdash}
\newcommand{\leftimprule}{\imp}
\newcommand{\imp}{\rightarrow}
\newcommand{\en}{\wedge} 
\newcommand{\of}{\vee}
\newcommand{\bx}{\raisebox{-.3mm}{$\Box$}}
\newcommand{\dbx}{\raisebox{.13mm}{$\boxdot$}}
\newcommand{\bof}{\bigvee}
\newcommand{\ben}{\bigwedge}
\newcommand{\seq}{\Rightarrow}
\newcommand{\sml}{\ll}
\newcommand{\cdwlord}{\prec}
\newcommand{\ord}{\sqsubset}
\newcommand{\cd}{{\it cd}}
\newcommand{\cw}{{\it cw}}
\newcommand{\cl}{{\it cl}}
\newcommand{\cdwl}{{\it dwl}}
\newcommand{\De}{\Delta}
\newcommand{\Ga}{\Gamma}
\newcommand{\Sig}{\Sigma}
\newcommand{\gam}{\gamma}
\renewcommand{\phi}{\varphi}
\newcommand{\varchi}{\raisebox{.4ex}{\mbox{$\chi$}}}
\newcommand{\calb}{{\EuScript B}}
\newcommand{\cald}{{\EuScript D}}
\newtheorem{Theor}{Theorem}
\newenvironment{theorem}{\begin{Theor}\em }{\end{Theor}}
\newtheorem{Lemma}{Lemma}
\newenvironment{lemma}{\begin{Lemma}\em }{\end{Lemma}}
\newtheorem{Coro}{Corollary}
\newenvironment{corollary}{\begin{Coro}\em }{\end{Coro}}
\newtheorem{Remark}{Remark}
\newenvironment{remark}{\begin{Remark}\em }{\end{Remark}}
\newtheorem{Claim}{Claim} 
\newtheorem{defin}{Definition}
\newtheorem{exam}{Example}
\newenvironment{proof}{{\bf Proof}}{\hfill $\slot$}
\newcommand{\slot}{\hfill \mbox{$\dashv$}}
\numberwithin{figure}{section}
\begin{document}
\title{Proof Theory for Intuitionistic Strong L\"ob Logic} 
\vskip5pt 
\author{
Iris van der Giessen\footnote{At the time of this version: Utrecht University, the Netherlands. At the time of arXiv submission (March 2023): University of Birmingham, United Kingdom, i.vandergiessen@bham.ac.uk.}\ \  \& Rosalie Iemhoff
\footnote{Utrecht University, the Netherlands, r.iemhoff@uu.nl. Support by the Netherlands Organisation for Scientific Research under grant 639.073.807 is gratefully acknowledged.} }
    
\maketitle

\begin{abstract}
 \noindent  
This paper introduces two sequent calculi for intuitionistic strong L\"ob logic $\iSL$: a terminating sequent calculus $\DYSL$ based on the terminating sequent calculus $\DY$ for intuitionistic propositional logic $\IPC$ and an extension $\LJSL$ of the standard cut-free sequent calculus $\LJ$ without structural rules for $\IPC$.  One of the main results is a syntactic proof of the cut-elimination theorem for $\LJSL$. In addition, equivalences between the sequent calculi and Hilbert systems for $\iSL$ are established. It is known from the literature that $\iSL$ is complete with respect to the class of intuitionistic modal Kripke models in which the modal relation is transitive, conversely well-founded and a subset of the intuitionistic relation. Here a constructive proof of this fact is obtained by using a countermodel construction based on a variant of $\DYSL$. The paper thus contains two proofs of cut-elimination, a semantic and a syntactic proof.
\end{abstract}

{\small {\em Keywords}: intuitionistic modal logic, strong L\"ob logic, G\"odel L\"ob logic, sequent calculus, termination}

{\footnotesize MSC: 03B45, 03F03, 03F05}

\section{Introduction}
In recent years there has appeared quite some research on nonclassical modal logic, research that stems from various motives.  
Nonclassical modal logics are, for example, related to semantical notions that appear in category theory and algebraic logic and can be used in these settings \citep{litak14}. Then there is the area of constructive modal logic, where the aim is to develop and investigate modal logics that have some form of computational content, such as a Curry-Howard interpretation or a realizability semantics \citep{paivaetal04}. Related but from a different perspective is research in philosophical logic that aims to give an account of modalities in an intuitionistic setting. One of the early approaches in this direction is \citep{simpson94}. Finally, there is research driven by the desire to understand in how far results and methods from classical modal logic are preserved when classical logic is replaced by a nonclassical logic, and in how far salient properties from a nonclassical logic are preserved when modalities are added. This intrinsically motivated line of research, which was mentioned in the literature for one of the first times in \citep{fischerservi77}, is the one that this paper mainly belongs to. Another motive is given at the end of the introduction. 

This paper is the third in a line of papers \citep{iemhoff18,giessen&iemhoff2019} in which terminating sequent calculi are developed for intuitionistic modal logics; for $\iK$, $\iKD$, $\iGL$ in the two mentioned papers, and for $\iSL$  in this paper. The reason for developing terminating sequent calculi for logics is that they are a useful tool in the investigation of a logic, in particular in establishing that a logic has uniform interpolation. In \citep{iemhoff16} the property of uniform interpolation was used to obtain general results on the existence of classical modal logics, and in \citep{iemhoff17} a similar approach was developed for intermediate logics as well as a small class of intuitionistic modal logics, thereby proving that $\iK$ and $\iKD$ have uniform interpolation. The results on classical modal logics and intermediate logics were generalized and extended in \citep{jalali&tabatabai2018}, that contains applications  to substructural logics, among other things. In all these papers, results on uniform interpolation heavily rely on the existence of terminating sequent calculi for the logics in question, as the interpolants are defined on the bases of such calculi. Recently, Litak and Visser have obtained, but not yet published, proofs of interpolation and uniform interpolation for $\iSL$, based on their work on the Lewis arrow in the context of intuitionistic logic \citep{litak&visser18}. Here we provide a syntactic proof of interpolation, but we currently do not know of a proof-theoretic proof of uniform interpolation for $\iSL$.

In this paper we develop sequent calculi for $\iSL$, the logic that consists of $\iK$ extended by the Strong L\"ob Principle 
\[
 {\rm (SL)} \ \ \  (\bx\phi \imp \phi) \imp \phi.
\]
Here $\iK$ is an intuitionistic version of the modal logic $\K$ with only the box modality. $\iSL$ is equivalent to the logic $\iK$ extended by the L\"ob Principle (left) and the Completeness Principle (right):
\[
  {\rm (GL)} \ \ \  \bx(\bx\phi \imp \phi) \imp \bx\phi \ \ \ \ \ \ \ 
  {\rm (CP)} \ \ \  \phi \imp \bx\phi.
\]
The logic $\iK$ extended only by {\rm GL} is denoted $\iGL$. 
In \citep{visser&zoethout18} it was shown that $\iGL + {\rm CP}$, and thus $\iSL$, is the provability logic of 
$\HA + (\phi \imp \bx_{(\HA^s)^*}\phi)$, where $\HA^s$ denotes an axiomatization of $\HA$ with only small axioms\footnote{See the paper for a definition.} and $(\HA^s)^*$ is the unique theory such that provably in $\HA$: $(\HA^s)^*=\HA^s + (\phi \imp \bx_{(\HA^s)^*}\phi)$. Thus $(\HA^s)^*$ is a nonclassical theory that proves its own completeness. (CP) naturally is part of the axiomatization of the provability logic of $\HA$ with respect to $\Sigma_1$-provability given in \citep{ardeshir&mojtahedi18}. 

Note that both $\iGL$ and $\iSL$ do not contain a diamond operator. From the provability point of view, $\Diamond$ can be interpreted as consistency, the dual of provability, and is thus defined in terms of negation and $\Box$. In the literature on intuitionistic modal logic it is standard to consider $\Diamond$ independently from $\Box$, but in this setting it is not clear what the natural interpetation of that operator should be.

In this paper we present two calculi for $\iSL$: $\LJSL$ and $\DYSL$. We show that $\LJSL$ has cut-elimination (Section~\ref{seccutelimination}) and that $\LJSL$ is equivalent to $\DYSL$ (Section~\ref{secequivalence}), thus proving that the latter has cut-elimination as well. In the case of $\iGL$ and $\iSL$ the syntactic cut-elimination proof is complicated, because the fact that already for classical $\GL$ establishing that the standard nonterminating calculus $\LKGL$ has cut-elimination is highly nontrivial \citep{gore&ramanayake2012,valentini83}. The cut-elimination proof in this paper uses the key idea of these two papers, introducing a new measure on proofs called the width. Section~\ref{seclogics} contains the preliminaries, including a discussion on the width of proofs. We furthermore show, using $\LJSL$, that $\iSL$ has interpolation (Section~\ref{secinterpolation}). 

The purpose of developing $\DYSL$ is that proof search in this calculus is strongly terminating (Section~\ref{sectermination}). This means that all possible applications of the rules yield a finite proof search tree. Decidability of the logic immediately follows.  Developing terminating sequent calculi for intuitionistic modal logics is in general harder than for their classical counterparts, as the standard calculi for intuitionistic propositional logic $\IPC$ are already nonterminating. However, a terminating sequent calculus $\DY$ for $\IPC$ has been developed in \citep{dyckhoff92} and this calculus has been used to obtain the terminating sequent calculi for $\iK$, $\iKD$, $\iGL$ mentioned above, and it is the basis for the terminating sequent calculus for~$\iSL$ introduced in this paper. These calculi are not extensions of $\DY$ by the standard modal sequent rules alone, because the intricate rules for left implication in $\DY$ have to be extended as well to cover the modal case. The order used to show strong termination is based on an order by \cite{bilkova06} used in her syntactic proof of uniform interpolation for~$\GL$. 

All these results are constructive and proved syntactically, and are closely related to results established for $\iGL$ in \citep{giessen&iemhoff2019}. New in the paper is a semantic treatment of the cut-elimination theorem as well (Section~\ref{Kripke iSL}). It immediately follows from the completeness proof via the Sch\"utte-Takeuti-Tait countermodel construction, which is based on $\DYSL$. It is shown that nonderivability in $\DYSL$ implies the existence of an~$\iSL$ countermodel. Thus giving a constructive proof of the completeness (already known from the work of \cite{ardeshir&mojtahedi18}) of $\iSL$ with respect to Kripke models that are transitive, conversely well-founded and in which the modal relation is a subset of the intuitionistic relation. 

The role of intuitionistic modal logic in philosophy lies in the fact that intuitionistic modal logics are formalizations of the logical properties of modalities considered from an intuitionistic point of view, modalities such as necessity or obligation or as in this paper, provability. Although the notion of proof is central in constructive reasoning, the study of formal intuitionistic provability is a highly nontrivial matter. Whereas it has been known for a long time that the axiomatization of classical provability in theories such as Peano Arithmetic is~$\GL$ \citep{solovay76}, it is unknown till this day what the axiomatization of the provability logic of $\HA$ is, although it is known that it is a proper extension of $\iGL$.

This makes $\iSL$ special, being one of the few known provability logics of arithmetical theories based on intuitionistic logic. Investigating $\iSL$ may further the understanding of formal constructive provability, and the proof theory developed in this paper hopefully furthers the investigation of $\iSL$.

\section{Logics and calculi}
 \label{seclogics}
The language $\lang$ for propositional intuitionistic modal logic considered here contains a {\em constant} $\bot$, {\em propositional variables} or {\em atoms} $p,q,r,\dots$, the {\em modal operator} $\bx$ and the {\em connectives} $\neg,\en,\of,\imp$, where $\neg\phi$ is defined as $(\phi \imp\bot)$. $\bot$ is by definition not an atom and $\Diamond$ does not occur in our language.  
For multisets $\Ga$ and $\Pi$, we denote by $\Ga \cup \Pi$ the multiset that contains only formulas $\phi$ that belong to $\Ga$ or $\Pi$ and the number of occurrences of $\phi$ in $\Ga \cup \Pi$ is the sum of the occurrences of $\phi$ in $\Ga$ and in $\Pi$.  We consider (single-conclusion) sequents, which are expressions $(\Ga \seq \De)$, where $\Ga$ and $\De$ are finite multisets of formulas and $\De$ contains at most one formula, and which are interpreted as $I(\Ga \seq \De) = (\ben\Ga \imp \bof\De)$. We denote finite multisets by $\Ga,\Pi,\De,\Sig$. In a sequent, $\Ga,\Pi$ is short for $\Ga\cup \Pi$, and $\Ga,\phi^n,\Pi$ denotes the union of $\Ga$, $\Pi$ and the multiset that consists of $n$ copies of $\phi $.   Furthermore ($a$ for antecedent, $s$ for succedent):  
\[
 (\Ga \seq \De)^a \defn \Ga \ \ \ \ (\Ga \seq \De)^s \defn \De \ \ \ \ 
 \bx\Ga \defn \{ \bx\phi \mid \phi \in \Ga \} \ \ \ \ \dbx\Ga \defn \Ga \cup\bx\Ga.
\]
$\dbx\phi$ is short for $\phi\en\bx\phi$, but if the expression occurs as an element of a sequent it stands for $\phi,\bx\phi$, meaning that $(\Ga,\dbx\phi\seq\De)$ should be read as  $(\Ga,\bx\phi, \phi\seq\De)$. For $\iSL$, $\boxdot \phi \leftrightarrow \phi$, because of the Completeness Principle $\phi \impl \Box \phi$. 

Given a sequent calculus \G\ and a sequent $S$, $\af_\G S$ denotes that $S$ is derivable in \G. The logic $\lgc_\G$ {\em corresponding} to \G\ is defined as  
\[
 \af_{\lgc_\G} \phi\ \defn \ \af_\G (\ \seq \phi). 
\]
The two main sequent calculi in this paper are  $\LJSL$ and $\DYSL$.

The {\em degree} of a formula $\phi$ is inductively defined by $d(\bot)=0$, $d(p)=1$, $d(\bx \phi)=d(\phi)+1$, and $d(\phi\circ \psi)= d(\phi)+d(\psi)+1$ for $\circ \in\{\en,\of,\imp\}$.
We use an order on sequents based on the {\em weight function} $w(\cdot)$ on formulas from \citep{dyckhoff92}, which is inductively defined as: the weight of an atom and the constant $\bot$ is 1, $w(\bx \phi)=w(\phi)+1$, and $w(\phi \circ \psi) = w(\phi)+w(\psi)+i$, where $i=1$ in case $\circ \in \{\of,\imp\}$ and $i=2$ if $\circ$ equals $\en$. 
We use the following ordering on sequents: $S_0 \sml S_1$ if and only if 
$S_0^a\cup S_0^s \sml S_1^a\cup S_1^s$, where $\sml$ is the order on multisets determined by weight as in \citep{dershowitz&manna79} (where they in fact define $\gg$): for multisets $\Ga,\De$ we have $\De \sml \Ga$ if $\De$ is the result of replacing one or more formulas in $\Ga$ by zero or more formulas of lower weight. 

For the termination of $\DYSL$ we need another order on sequents as well. Let $b(S)$ be the number of different boxed formulas in $S^a$, considered as a set. Given a number $c$, define the orderings $\prec^c$ and $\ord^c$ by: $S_1\prec^c S_2$ if and only if $c-b(S_1)< c-b(S_2)$, and $S_1 \ord^c S_2$ if and only if either $S_1 \prec^c S_2$ or $b(S_1)=b(S_2)$ and $S_1 \sml S_2$. As we will see, this order is used only in cases where $c \geq b(S)$.

\subsection{Intuitionistic modal logic}
 \label{secclassical}
One of the standard calculi without structural rules for $\IPC$ is $\LJ$, given in Figure~\ref{figgthm}, which is the propositional part of the calculus {\sf G3i} from \citep{troelstra&schwichtenberg96}. 
The calculus \DY\ in Figure~\ref{figdy} is the terminating sequent calculus for $\IPC$ independently discovered by \cite{dyckhoff92} and \cite{hudelmaier88,hudelmaier92,hudelmaier93}, where we have renamed some of the rules. Recall that in this paper sequents are assumed to have at most one formula in the succedent, which is a slight but inessential deviation from the calculus by Dyckhoff and Hudelmaier, where sequents are assumed to have exactly one formula at the right. The calculus is terminating with respect to the above well-ordering $\sml$ on sequents. The calculus has no structural rules, but they are admissible in it, as is shown in Section~\ref{secequivalence}.

\begin{figure}[!b]
 \centering
\[\small 
 \begin{array}{ll}
 \deduce{\Ga,p \seq p}{} \ \ \ \text{{\it At} \ \ ($p$ an atom)}  & 
  \deduce{\Ga,\bot\seq \De}{} \ \ \ L\bot \\
 \\ 
 \infer[R\en]{\Ga \seq \phi \en \psi}{\Ga\seq \phi & \Ga \seq \psi} & 
  \infer[L\en]{\Ga, \phi\en \psi \seq \De}{\Ga, \phi, \psi \seq \De} \\
 \\
 \infer[R\!\of \ (i=0,1)]{\Ga \seq \phi_0 \of \phi_1}{\Ga \seq \phi_i} & 
  \infer[L\of]{\Ga,\phi\of \psi\seq \De}{\Ga, \phi \seq \De & \Ga,\psi \seq \De} \\
  \\
 \infer[R\!\imp]{\Ga \seq \phi \imp \psi}{\Ga,\phi \seq \psi} & 
 \infer[L\!\imp]{\Ga,\phi\imp\psi\seq \De}{\Ga,\phi\imp\psi\seq \phi & \Ga,\psi\seq \De}
 \end{array}
\] 
\caption{The Gentzen calculus $\LJ$}
 \label{figgthm}
\end{figure}

\begin{figure}[!b]
 \centering
\[\small 
 \begin{array}{ll}
 \deduce{\Ga,p \seq p}{} \ \ \ \text{{\it At} \ \ ($p$ an atom)}  & 
  \deduce{\Ga,\bot\seq \De}{} \ \ \ L\bot \\
 \\ 
 \infer[R\en]{\Ga \seq \phi \en \psi}{\Ga\seq \phi & \Ga \seq \psi} & 
  \infer[L\en]{\Ga, \phi\en \psi \seq \De}{\Ga, \phi, \psi \seq \De} \\
 \\
 \infer[R\!\of \ (i=0,1)]{\Ga \seq \phi_0 \of \phi_1}{\Ga \seq \phi_i} & 
  \infer[L\of]{\Ga,\phi\of \psi\seq \De}{\Ga, \phi \seq \De & \Ga,\psi \seq \De} \\
  \\
 \infer[R\!\imp]{\Ga \seq \phi \imp \psi}{\Ga,\phi \seq \psi} & 
 \infer[Lp\!\imp\text{ ($p$ an atom)}]{\Ga, p,p \imp \phi \seq \De}{\Ga,p,\phi \seq \De}\\
 \\
 \infer[L\en\!\imp]{\Ga, \phi\en\psi \imp \gamma \seq \De}{\Ga,\phi\imp (\psi\imp\gamma)\seq\De} & 
 \infer[L\of\!\imp]{\Ga,\phi \of \psi \imp \gamma \seq \De}{
  \Ga,\phi \imp \gamma, \psi \imp \gamma \seq \De}\\ 
 \\ 
 \infer[L\imp\!\imp]{\Ga, (\phi\imp \psi) \imp \gamma \seq \De}{
  \Ga, \psi\imp \gamma \seq \phi \imp \psi & \gamma,\Ga \seq \De} &  
\end{array}
\] 
\caption{The Gentzen calculus $\DY$}
 \label{figdy} 
\end{figure}

As one can see, $\DY$ is the result of replacing the single left implication rule $L\!\imp$ in $\LJ$ by the four left implication rules of $\DY$, each corresponding to the outermost connective of the antecedent of the principal implication. When one analyses the nontermination of $\LJ$, one sees that in the standard ordering on sequents in every rule every premise is of lower complexity than its conclusion, except in the left implication rule, where  the principal implication occurs in the left premise. This explains why the only difference between $\DY$ and $\LJ$ is the left implication rule(s). Since in this paper the propositional language is extended by the modal operator $\bx$, we use five instead of four replacing implication rules, one extra for the case that the the antecedent of the principal implication is boxed.

The calculi considered in this paper are given in Figure~\ref{figlogics}. They are extensions of either~\LJ\ or~\DY\ by the modal rules and the additional left implication rules given in Figure~\ref{figmodal}. In the rules of $\LJX$, the {\em principal} formula of an inference are defined as usual for the connectives \citep{buss98}. For ${\rm X} \in \{{\rm GL,SL}\}$, the principal formulas are $\bx\phi$ and all formulas in $\bx\Ga$ for $\rsch_{\rm X}$, and for $\leftimprule_{\rm X}$ the principal formulas are $\bx\phi \imp \psi$ and the formulas in~$\bx\Ga$. The {\em diagonal} formula of $\rsch_{\rm GL}$ and $\rsch_{\rm SL}$ is $\bx\phi$. In an application of $\rsch_{\rm SL}$, the formulas in $\bx\Sig$ are said to be {\em introduced} in that inference step. Note that in $\LJSL$, $\Pi$ ranges over multisets that do not contain boxed formulas. In case of~$\iK$ and~$\iKD$ in \citep{iemhoff18} there is no requirement on $\Pi$ in the modal rules of the calculi~$\LJK$ and~$\LJKD$. The reason for the restriction on $\Pi$ and the presence of~$\bx\Sig$ in~$\LJSL$ lies in the proof of the cut-elimination theorem. We indicate explicitly in that proof (of Theorem~\ref{thmcutadm}) where these constraints are used.  

The notions {\em ancestor} is defined as usual, see \citep{buss98}. For example in rule $\rsch_{\rm SL}$, formula occurrences in $\Box \Sigma$ do not have an ancestor, and for formula occurrence $\Box B \in \Box \Gamma$ in the conclusion, both formula occurrences $\Box B,B \in \boxdot \Gamma$ are ancestors of it, and $\Box \phi$ and $\phi$ in the premise are ancestors of $\Box \phi$ in the conclusion. A {\em strict ancestor} of a formula $\phi$ is defined to be an ancestor of $\phi$ that as a formula is equal to $\phi$. In the rest of the paper we often omit these words. When in a sequent $S$ on a branch there occurs a formula $\phi$, then if we speak of an occurrence of $\phi$ in a sequent higher (lower) than $S$ along that branch, then we mean a strict ancestor (descendent) of that occurrence of $\phi$ in $S$.

\begin{figure}[t]
\[ \begin{array}{ccc} 
\\ \small
\\
  \infer[\rsch_{\rm GL}]{\Sig,\bx\Ga \seq \bx\phi}{\dbx\Ga, \bx\phi \seq \phi} & 
  \infer[\rsch_{\rm GL}^4]{\Pi,\bx\Ga \seq \bx\phi}{\dbx\Ga, \bx\phi \seq \phi} & 
  \infer[\leftimprule_{\rm GL}]{\Pi,\bx\Ga,\bx\phi \imp \psi \seq \De}{
   \dbx \Ga,\bx\phi \seq \phi & \Pi,\bx\Ga,\psi \seq \De}
\\
\\
  \infer[\rsch_{\rm SL}]{\bx\Sig,\Pi,\bx\Ga \seq \bx\phi}{\Pi,\dbx\Ga,\bx\phi \seq \phi} & 
  \infer[\rsch_{\rm SL}^4]{\Pi,\bx\Ga \seq \bx\phi}{\Pi,\dbx\Ga,\bx\phi \seq \phi}  &
  \infer[\leftimprule_{\rm SL}]{\Pi,\bx\Ga, \Box \phi \imp \psi \seq \De}{
   \Pi,\dbx \Ga,\bx\phi, \Box \phi \impl \psi \seq \phi & \Pi,\bx\Ga,\psi \seq \De}

\end{array}
\]
\caption{The modal rules. $\Pi$ ranges over multisets that do not contain boxed formulas.}  
 \label{figmodal}
 \end{figure}
\begin{figure}[t]
\[
 \begin{array}{lllll} 
\text{name} & \text{calculus} & & \text{name} & \text{calculus} 
\\
\LJGL\  & \LJm +\rsch_{\rm GL} & & \DYGL\ & \DYm + \leftimprule_{\rm GL} +\ \rsch_{\rm GL}^4 \\
\LJSL\ & \LJm +\rsch_{\rm SL} & & \DYSL\ & \DYm + \leftimprule_{\rm SL}  +\ \rsch_{\rm SL}^4 \\
 \end{array}
\]
\caption{$\LJm$ and $\DYm$ stand for the logic $\LJ$ and $\DY$, respectively,  but then formulated for the modal language of this paper.} 
\label{figlogics}
\end{figure}

The {\em height} of a derivation is the length of its longest branch, where branches consisting of one node are considered to have height 1. The height of a sequent in a derivation is the height of its subderivation. The {\em level} of a sequent is the sum of the heights of its premises and the level of an inference is the level of its conclusion.\footnote{In \citep{giessen&iemhoff2019} the level of an inference was called its {\em height}, following \citep{gore&ramanayake2012}. Here we use {\em level} instead, the terminology from \citep{troelstra&schwichtenberg96}} If $\af$ stands for derivability in a given calculus, then we write $\af^d S$ if $S$ has a proof of height at most $d$ in that calculus.

\subsection{Structural rules}
In the cut-elimination proof we need the usual lemma on the height-preserving admissibility of weakening, contraction and inversion (Lemma~\ref{lemstructural}), but for $\LJSL$ we also need a strengthening of the closure under weakening (Lemma~\ref{lemweakening}), for which we first have to introduce two transformations on proofs. Since in this paper all weakening (contraction) is weakening (contraction) at the left, we omit these last words and just speak of weakening (contraction).

\begin{lemma}[Weakening, contraction and inversion]\label{lemstructural}\\ 
For $\af$ denoting $\af_{\LJSL}$ or $\af_{\DYSL}$, the following statements hold. 

\begin{description}[labelwidth=3.1cm]

\item[{\it Falsum rule}]
$\af^d \Ga\seq \bot$ implies $\af^d \Ga \seq \De$, and $\af^d \Ga\seq \emptyset$ implies $\af^d \Ga \seq \De$.

\item[{\it Weakening}] 
$\af^d \Ga\seq \De$ implies $\af^d \Ga,\phi \seq \De$. 

\item[{\it Contraction $\LJSL$}]
$\af^d_\LJSL \Ga,\phi,\phi  \seq \De$ implies $\af^d_\LJSL \Ga,\phi \seq \De$.

\item[{\it Invertible rules}] $R\en$, $L\en$, $L\of$, $R\!\imp$, $Lp\!\imp$, $L\en\!\imp$, $L\of\!\imp$ are height-preserving invertible. 

\item[{\it Inversion L$\imp$}]
$\af^d \Ga,\phi\imp\psi \seq \De$ implies $\af^d \Ga, \psi \seq \De$.

\item[{\it Inversion L$\imp\!\imp$}]
$\af^d \Gamma, (\phi \imp \psi) \imp \gamma \Impl \Delta$ implies $\af^d \Ga, \gamma \seq \De$.

\item[{\it Inversion $\imp_{{\rm SL}}$}]
$\af^d \Pi, \Box \Gamma, \Box \phi \imp \psi \Impl \Delta$ implies $\af^d \Pi, \Box \Gamma, \psi \seq \De$.

\item[{\it Extended axiom}]
$\af \Ga, \phi \seq \phi$ for all $\phi$.

\end{description}

\end{lemma}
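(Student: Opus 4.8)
The plan is to prove the eight items by induction, in an order that respects their mutual dependencies. The \emph{falsum rule} and \emph{weakening} are each a self-contained induction on the height $d$; the four invertibility items are then proved together by a single induction on $d$ (already invoking weakening); \emph{contraction} is proved by induction on $d$ (for $\DYSL$ refined to a primary induction on the weight of the contracted formula with a subsidiary induction on $d$), invoking the invertibilities; and the \emph{extended axiom} is proved last, by induction on the structure of $\phi$, using weakening and inversion $L\imp$. In every step the argument is a case analysis on the last rule of the given derivation; the purely propositional cases are the standard ones for $\LJ$ and $\DY$, so the real content sits in the modal rules $\rsch_{\rm SL}$ of $\LJSL$ and $\rsch_{\rm SL}^4,\leftimprule_{\rm SL}$ of $\DYSL$.

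For the \emph{falsum rule} I observe that $\bot$ is never the principal succedent of a right rule and that the modal right rules produce a boxed succedent, so no rule introduces $\bot$ on the right; hence $\bot$ always occurs as a side formula of a left rule, and I merely apply the induction hypothesis to every premise whose succedent is $\bot$ — both premises of $L\of$, and the right premise of $L\imp$, $L\imp\imp$ and $\leftimprule_{\rm SL}$ — and reapply the rule, the base case $L\bot$ being immediate. \emph{Weakening} is routine: the new antecedent formula is a side formula, placed in the unrestricted context ($\Sigma$ for the $\DYSL$ rules, and $\bx\Sig$ or $\Pi$ for $\rsch_{\rm SL}$ according to whether the formula is boxed), with the induction hypothesis applied to the premise only when the chosen context occurs there.

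The invertibilities are the core. If the decomposed formula is principal, the premise yields the conclusion directly (for inversion $L\imp\imp$ and inversion $\imp_{\rm SL}$ this is the right premise of $L\imp\imp$ resp.\ $\leftimprule_{\rm SL}$; for inversion $L\imp$ through $L\en\imp$ or $L\of\imp$ it takes two nested lower-height applications); otherwise the decomposition commutes with the last rule via the induction hypothesis. The one genuinely modal complication arises for $\rsch_{\rm SL}$ in $\LJSL$: inverting a left rule on a formula sitting in the boxed-free context $\Pi$ may expose a \emph{boxed} formula $\bx\theta$ (e.g.\ inverting $L\en$ on $\bx\theta\en\gamma$), which can no longer stay in $\Pi$. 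I resolve this exactly as the shape of $\rsch_{\rm SL}$ dictates: use height-preserving weakening to adjoin the companion $\theta$, so that $\bx\theta$ may be moved into the diagonal context and absorbed by $\dbx$; this is precisely why weakening must precede the invertibilities. In $\DYSL$ the difficulty does not arise, since the context $\Sigma$ of $\rsch_{\rm SL}^4$ and $\leftimprule_{\rm SL}$ is unrestricted and the exposed boxed formula may simply remain in it.

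\emph{Contraction} follows the usual scheme: when a copy of $\phi$ is a side formula of the last rule I apply the induction hypothesis and reapply the rule, and when a copy is principal I unfold the second copy with the matching invertibility, contract the resulting strictly smaller components, and reapply. For $\LJSL$, whose implication rule is the single $L\imp$, this is the standard argument, exactly as for $\LJ$. The main obstacle is $\DYSL$, where the principal--duplicate cases of $L\imp\imp$ and $\leftimprule_{\rm SL}$ are delicate, just as for $\DY$: on the right premise, inversion $L\imp\imp$ resp.\ inversion $\imp_{\rm SL}$ lowers the duplicated principal formula to the smaller $\gamma$ resp.\ $\psi$, which the primary (weight) induction then contracts, while the left premise requires the finer Dyckhoff--Negri-style analysis — this is where the listed inversion lemmas are really needed and where I expect the bulk of the work to lie. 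Finally, the \emph{extended axiom} $\af\Ga,\phi\seq\phi$ is obtained by induction on $\phi$: atoms and $\bot$ are the axioms $At$ and $L\bot$; a connective is reduced through its right rule together with the corresponding invertible left rule and weakening; and the case $\phi=\bx\chi$ is closed by applying $\rsch_{\rm SL}$ (resp.\ $\rsch_{\rm SL}^4$) with diagonal formula $\bx\chi$, whose premise contains $\chi$ both in the antecedent (via $\dbx$) and in the succedent and is therefore an instance of the extended axiom for the smaller $\chi$.
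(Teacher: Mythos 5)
Your overall architecture matches the paper's proof: the extended axiom by induction on the formula (the paper uses the degree of $\phi$ for $\LJSL$ and Dyckhoff's weight for $\DYSL$), everything else by induction on the height $d$, with exactly your dependency order (weakening feeds the inversions, which feed contraction). The only case the paper spells out is the one you also isolate as nonstandard, namely weakening through the modal rule, where a boxed formula is absorbed into the $\bx\Sig$ slot of the conclusion and a non-boxed one goes into $\Pi$ via the induction hypothesis on the premise; your treatment agrees with it. Your handling of the $\LJSL$-inversions that expose a boxed formula inside the box-free context $\Pi$ of $\rsch_{\rm SL}$ --- weaken the premise height-preservingly by the companion $\theta$ so that $\bx\theta$ can migrate into the $\dbx\Ga$/$\bx\Ga$ slot --- is correct, and is a genuine detail the paper leaves implicit behind its reference to the standard literature.

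There is, however, one concrete gap. In $\DYSL$, your claim that \emph{Inversion L$\imp$} goes through by ``two nested lower-height applications'' fails when $\phi\imp\psi$ with $\phi=\phi_1\of\phi_2$ is principal in $L\of\!\imp$: the premise is $(\Ga,\phi_1\imp\psi,\phi_2\imp\psi\seq\De)$, and two applications of the induction hypothesis yield $(\Ga,\psi,\psi\seq\De)$, not $(\Ga,\psi\seq\De)$. Merging the two copies of $\psi$ requires contraction, which in your ordering is proved only \emph{after} the inversions --- a circularity. (In $\LJSL$ no problem arises, since the only principal case is $L\!\imp$, whose right premise is the target directly; and in the $L\en\!\imp$ case the two nested applications really are duplication-free, so the gap is localized to $L\of\!\imp$.) The standard repair, consistent with Dyckhoff--Negri and implicitly with the paper, is to observe that the contraction proof for $\DYSL$ needs only the seven invertible rules together with Inversion $L\!\imp\!\imp$ and Inversion $\imp_{\rm SL}$, whose principal cases are duplication-free (they are read off the right premise), and then to derive Inversion $L\imp$ for $\DYSL$ afterwards, using the already established height-preserving contraction to absorb the doubled $\psi$. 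A second, minor imprecision: for the extended axiom in $\DYSL$, plain structural induction on $\phi$ is not enough, because the reduction of, say, $(\phi_1\en\phi_2)\imp\gamma$ passes through $\phi_1\imp(\phi_2\imp\gamma)$, which is not a subformula; one needs the weight measure, exactly as the paper prescribes (degree suffices only for $\LJSL$). Since you already invoke weight for contraction, both repairs fit within your plan, but as written these two steps do not go through.
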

\begin{proof}
The last statement of the list is proved by induction, on the degree of $\phi$ in the case of $\LJSL$ and on the weight of $\phi$ in the case of $\DYSL$.
The other statements are proved by induction on height $d$. 
{\it Weakening} is used in the proofs of {\it Inversion L$\imp$} and {\it Invertible Rules}, which are used in the proof of {\it Contraction}. 

The proofs of these properties are standard. For details, see page 66--67 in \citep{troelstra&schwichtenberg96}. Let us only present the proof for {\it Weakening} in $\LJSL$ and inversion of $\imp_{{\rm SL}}$. 

For {\it Weakening}, suppose $\af^dS$. If $d=1$ and $S$ is an instance of an axiom, the statement clearly holds. Suppose $d>1$. If the last inference in the proof of $S$ is not a modal rule, the proof that $\af^d S^a,\phi\seq S^s$ is as usual. Therefore suppose the last inference is $\rsch_{\rm SL}$. Thus $S$ is of the form $(\bx\Sig,\Pi,\bx\Ga \seq \bx\psi)$ and the premise is $(\Pi,\dbx\Ga,\bx\psi \seq \psi)$, where $\Pi$ does not contain boxed formulas. We have to show that $\af^d (\bx\Sig,\Pi,\bx\Ga,\phi \seq \bx\psi)$. If $\phi$ is boxed, this follows immediately from an application of $\rsch_{\rm SL}$ to the premise in which $\phi$ is introduced in the conclusion. In case $\phi$ is not a boxed formula, then by the induction hypothesis $\af^{d-1} (\Pi,\dbx\Ga,\phi,\bx\psi\seq \psi)$, and an application of $\rsch_{\rm SL}$ gives $\af^d (\bx\Sig,\Pi,\bx\Ga,\phi \seq \bx\psi)$. 

To prove inversion of $\imp_{{\rm SL}}$, we use the fact that {\it Weakening} holds in $\DYSL$ which is similarly shown as above. Suppose $\af^d (\Pi, \Box \Gamma, \Box \phi \impl \psi \Impl \Delta)$. Let us only present the interesting case with $d>1$ and the last rule applied is $\imp_{{\rm SL}}$ for some principal formula in $\Pi$, say $\Box \phi_1 \impl \psi_1$. So the sequent has the form $( \Pi', \Box \phi_1 \impl \psi_1, \Box \Gamma, \Box \phi \impl \psi \Impl \Delta)$ and the premises are $(\Pi', \Box \phi_1, \Box \phi_1 \impl \psi_1, \boxdot \Gamma, \Box \phi \impl \psi \Impl \phi_1)$ and $(\Pi', \psi_1, \Box \Gamma, \Box \phi \impl \psi \Impl \Delta)$. By induction hypothesis we have $\proofs^{d-1} (\Pi', \Box \phi_1, \Box \phi_1 \impl \psi_1, \boxdot \Gamma, \psi \Impl \phi_1)$ and $\proofs^{d-1} (\Pi', \psi_1, \Box \Gamma, \psi \Impl \Delta)$. If $\psi$ is not boxed, an application of $\imp_{{\rm SL}}$ gives the desired results. Otherwise, $\psi$ is of the form $\Box \psi'$ and we use {\it Weakening} to see that $\proofs^{d-1} (\Pi', \Box \phi_1, \Box \phi_1 \impl \psi_1, \boxdot \Gamma, \Box \psi', \psi' \Impl \phi_1)$, and again we apply $\imp_{{\rm SL}}$  to conclude $\af^d (\Pi, \Box \Gamma, \psi \Impl \Delta)$.
\end{proof}

\begin{remark}\label{remark height-pres contraction}
We are only certain about the height-preserving admissibility of contraction in
$\LJSL$. For $\DYSL$ a direct proof might be difficult and it is the question whether it would be height-preserving. In \citep{giessen&iemhoff2019} we claimed this result for $\DYGL$ in Lemma 4.1, but this statement might be wrong. However, all other results remain true because these are independent from this. Finally, note that the admissibility of contraction in $\DYSL$ follows once we have shown equivalence to $\LJSL$ (Corollary \ref{corconcutdy}).
\end{remark}

\subsection{Critical inferences}
Let $\cald$ be a derivation in $\LJSL$ with endsequent $(\Ga \seq \bx \phi)$. A {\em $\bx \phi$-critical inference over $(\Ga \seq \bx \phi)$} is an $\rsch_{\rm SL}$-inference $R$ in $\cald$ such that

\begin{enumerate}
\item $\bx \phi$ is principal in $R$; 
\item between $(\Ga \seq \bx \phi)$ and the conclusion of $R$ there is exactly one $\rsch_{\rm SL}$-inference in which~$\bx \phi$ is principal, and it is the diagonal formula of that inference. 
\end{enumerate}
The segment from the premise of $R$ till $(\Ga \seq \bx \phi)$ is a {\em $\bx \phi$-critical segment}. 
It is important to recall our convention  from Section~\ref{secclassical} about strict ancestors; in the definition of $\bx \phi$-critical inference, the $\bx \phi$ mentioned in 1.\ is required to be a strict ancestor of the $\bx \phi$ mentioned in 2.

\begin{remark}
 \label{remcritical} 
We are mostly interested in the case that $(\Ga \seq \bx\phi)$ is the conclusion of an $\rsch_{\rm SL}$-inference in a proof in $\LJSL$. In that case the $\bx\phi$-critical segment looks as follows, with $\Ga = \Box \Sigma_1,\Pi_1,\Box \Gamma_1$ and where $\Pi_1$ and $\Pi_2$ do not contain boxed formulas, and where $\Ga_1' \subseteq \Ga_1$:
\[\small 
 \infer[\rsch_{\rm SL}]{(S) \  \bx\Sig_1,\Pi_1,\bx\Ga_1\seq \bx\phi \ }{
  \deduce{\Pi_1,\dbx\Ga_1,\bx\phi\seq \phi}{\deduce{\calb}{
   \infer[R \text{ (application of }\rsch_{\rm SL})]{\bx\Sig_2,\Pi_2,\bx\Ga_1,\bx\Ga_2,\bx\phi \seq \bx\varchi}{
    \Pi_2,\dbx\Ga_1',\dbx\Ga_2,\dbx\phi,\bx\varchi \seq \varchi} }  } }
\]

It easily follows that there can be no applications of $\rsch_{\rm SL}$ in $\calb$, for at such an inference $\bx\phi$ would have disappeared in the sequents above it, as it is not principal in that inference by definition.\footnote{This is where we use the restriction that all formulas in the $\Pi$ of an application of $\rsch_{\rm SL}$ are not boxed.} As mentioned above, by definition the $\bx\phi$ in the top sequent is required to be a strict ancestor of the $\bx\phi$ in the bottom sequent. So only rules from $\LJm$ can be applied in the segment of $\calb$ indicated above, and therefore boxed formulas in
the antecedents of the sequents do not disappear in the backward direction of the proof tree. Therefore $\bx \Ga_1$ is still present in the sequent pictured above $\calb$.

Note that above $(\Gamma \seq \bx \phi)$ there may be more sequents than the ones in the $\bx\phi$-critical segment, as in the following example, where $\phi=(\phi'\imp \bx\chi)$ and $\phi'$ is not boxed, and the leftmost branch is a $\bx\phi$-critical segment: 
\[\small 
 \infer[\rsch_{\rm SL}]{\psi_1\of\psi_2\seq \bx\phi}{
  \infer[{\rm L}\of]{\psi_1\of\psi_2,\bx\phi\seq \phi}{
   \infer[{\rm R}\!\imp]{\psi_1,\bx\phi\seq \phi}{
    \infer[\rsch_{\rm SL}]{\psi_1,\bx\phi,\phi'\seq \bx\varchi}{
     \psi_1,\dbx\phi,\phi',\bx\varchi \seq \varchi} }   
  &
  \deduce{\psi_2,\bx\phi\seq \phi}{\cald'} 
  } } 
\] 

\end{remark}

\subsection{Weakening in critical segments}\label{subsecweak}
In the proof of cut-admissibility for $\LJSL$, we do not only need the closure under weakening as established in the previous lemma, but we also need to weaken $\bx\phi$-critical segments in a specific way. Namely, the situation may occur that we wish to weaken the sequents in such a segment in such a way that all inferences remain valid and the top sequent is weakened by $\varchi$ and the bottom sequent by $\bx\varchi$. As an example, consider a $\bx\bx\phi$-critical segment of minimal length: 
\[\small 
  \infer[\rsch_{\rm SL}]{\seq \bx\bx\phi}{
  \infer[\rsch_{\rm SL}]{\bx\bx\phi \seq \bx \phi}{
   \deduce{\dbx\bx\phi,\bx \phi \seq \phi}{} 
   }
  } 
\]

The following are two ways to achieve this, depending on whether $\varchi$ is boxed (left) or not boxed (right).
\[\small 
 \infer[\rsch_{\rm SL}]{\bx\varchi \seq \bx\bx\phi}{
  \infer[\rsch_{\rm SL}]{\bx\varchi,\varchi,\bx\bx\phi \seq \bx \phi}{
   \deduce{\bx\varchi,\varchi,\dbx\bx\phi,\bx \phi \seq \phi}{} 
  } 
 }
 \ \ \ \  
 \infer[\rsch_{\rm SL}]{\bx\varchi \seq \bx\bx\phi}{
  \infer[\rsch_{\rm SL}]{\bx\varchi,\varchi,\bx\bx\phi \seq \bx \phi}{
   \deduce{\bx\varchi,\varchi,\varchi,\dbx\bx\phi,\bx \phi \seq \phi}{} 
  } 
 }
\]

In general, if for a $\bx\phi$-critical segment we need to weaken its top sequent with at least $\varchi$ and its bottom sequent $S$ with $\bx\varchi$, then every sequent in $\cald$ has to be weakened at the left by $\bx\varchi,\varchi^n$, where $n$ is the number of applications of $\rsch_{\rm SL}$ below that sequent, except in the case that $\varchi$ is boxed, as in that case it can be introduced at every application of $\rsch_{\rm SL}$. Of course, in order to remain a valid derivation also sequents not in that segment but above $S$ have to be considered. In what follows the details behind this idea are spelled out. 

The {\em $\rsch_{\rm SL}$-grade} $g_{\cald}(S)$ of $S$ is defined as 
\[
 g_{\cald}(S) \defn \text{the number of applications of $\rsch_{\rm SL}$ below $S$ in $\cald$.} 
\]
We write $g(S)$ if $\cald$ is clear from the context.  
In this measure, if $S$ is the premise of an application of $\rsch_{\rm SL}$, 
then that application counts in $g(S)$. 
For example, on the following two branch segments, 
\[\small 
 \deduce{\vdots}{
  \infer[\rsch_{\rm SL}]{S_1}{
  \infer[\text{nonmodal rule}]{S_2}{
   \deduce{S_3}{
    \deduce{\vdots}{
    \infer[\rsch_{\rm SL}]{S_5}{
     \deduce{S_6}{\vdots} }  }  } 
  & & 
   \deduce{S_4}{
    \deduce{\vdots}{}  } 
  }
 } }
\]
if there is no application of $\rsch_{\rm SL}$ below $S_1$, then $g(S_1)=0$, $g(S_6)=2$, and $g(S_i)=1$ for all other $i$. 

Given a sequent $S=(\Ga\seq\De)$ and a derivation $\cald$, translations $(\cdot)^{\footnotesize \varchi}$ and $(\cdot)_{\footnotesize \varchi}$ on sequents in~$\cald$ are defined as follows 
(we suppress the dependence on $\cald$ in the notation, as it will always be clear from the context which derivation is meant): 
\[
 S^{\footnotesize \varchi} = (\Ga\seq \De)^{\footnotesize \varchi} \defn 
  \left\{ 
  \begin{array}{ll}
   \Ga \seq \De & \text{if $\varchi$ is boxed and $g(S)>0$} \\
   \Ga,\varchi \seq \De & \text{if $\varchi$ is not boxed or $g(S)=0$.} \\
  \end{array}
 \right.
\]
\[
 S_{\footnotesize \varchi} = (\Ga\seq \De)_{\footnotesize \varchi} \defn 
  \left\{ 
  \begin{array}{ll}
   \Ga,\bx\varchi \seq \De & \text{if $\varchi$ is boxed and $g(S)=0$} \\
   \Ga,\bx\varchi,\varchi \seq \De & \text{if $\varchi$ is boxed and $g(S)>0$} \\
   \Ga,\bx\varchi,\varchi^{g(S)}\seq \De & \text{if $\varchi$ is not boxed.} 
  \end{array}
 \right.
\]
$\cald_{\footnotesize \varchi}$ is the result of replacing each sequent $S$ in $\cald$ by 
$S_{\footnotesize \varchi}$, likewise for $\cald^{\footnotesize \varchi}$. 
Given a multiset of formulas $\{\varchi_1, \dots,\varchi_n\}$, define 
\[
 \cald^{\footnotesize \{\varchi_1, \dots,\varchi_n\}} \defn (\dots ((\cald^{\footnotesize \varchi_1})^{\footnotesize \varchi_2}) \dots )^{\footnotesize \varchi_n},
\]
and similarly for $\cald_{\footnotesize \{\varchi_1, \dots,\varchi_n\}}$.

\begin{lemma}[Strong weakening]
 \label{lemweakening} 
 \\
For any sequent $S=(\Ga \seq \De)$ and multiset of formulas $\Theta$ we have in $\LJSL$: 
\begin{enumerate}
\item If $\cald$ is a cutfree proof of $S$, then 
$\cald^{\footnotesize\Theta}$ is a cutfree proof of $(\Ga,\Theta\seq\De)$ of the same height as $\cald$. 

\item 
If $\cald$ is a cutfree proof of $S$, then 
$\cald_{\footnotesize\Theta}$ is a cutfree proof of $(\Ga,\Box\Theta\seq\De)$ of the same height as $\cald$. Moreover, given the following derivation $\cald$ that ends with $\rsch_{\rm SL}$ and has a $\bx\phi$-critical inference above its endsequent $S$:
\[\small 
 \infer[\rsch_{\rm SL}]{(S) \ \ \ \bx\Sig_1,\Pi_1,\bx\Ga_1\seq \bx\phi \ \ \ }{
  \deduce{\Pi_1,\dbx\Ga_1,\bx\phi\seq \phi}{\deduce{\calb}{
   \infer[\rsch_{\rm SL}]{\bx\Sig_2,\Pi_2,\bx\Ga_2,\bx\phi \seq \bx\varchi}{
    (S') \ \ \ \Pi_2,\dbx\Ga_2,\dbx\phi,\bx\varchi \seq \varchi \ \ \ } }  } }
\] 
Then in $\cald_{\footnotesize\varchi}$ this part of the proof becomes:
\[\small 
  \infer[\rsch_{\rm SL}]{\bx\Sig_1,\Pi_1,\bx\Ga_1,\bx\varchi\seq \bx\phi}{
   \deduce[\calb']{\Pi_1,\dbx\Ga_1,\bx\varchi, \varchi,\bx\phi \seq \phi}{
    \infer[\rsch_{\rm SL}]{\bx\Sig_{2},\Pi_{2},\bx\Ga_{2},\bx\varchi,\varchi,\bx\phi \seq \bx\varchi}{
    \Pi_{2},\dbx\Ga_{2},\dbx\phi,\bx\varchi,\varchi^k,\bx\varchi \seq \varchi} } } 
\]
where $k=1$ if $\varchi$ is a boxed formula and $k=g(S')=2$ otherwise. $\calb'$ is that part of  the branch segment in $\cald_{\footnotesize\varchi}$ that corresponds to $\calb$ in $\cald$. In $\cald_{\varchi}$, the topmost sequent is weakened at least by $\varchi$ and its bottom sequent by $\bx\varchi$.  
\end{enumerate}
\end{lemma}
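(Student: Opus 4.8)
The plan is to prove both parts by a single inductive framework: verify that the translations $(\cdot)^\varchi$ and $(\cdot)_\varchi$ act locally and correctly at each inference of $\cald$, so that the transformed tree has exactly the same shape as $\cald$. This immediately yields the height claim, and since the positions and number of $\rsch_{\rm SL}$-inferences are untouched, it also shows that the grade function $g$ is preserved by the translations; hence the iterated definition $\cald^{\footnotesize\{\varchi_1,\dots,\varchi_n\}}$ is well defined and it suffices to treat a single formula $\varchi$. First I would record that at the endsequent $g=0$, so that $S^\varchi=(\Ga,\varchi\seq\De)$ and $S_\varchi=(\Ga,\bx\varchi\seq\De)$, matching the claimed conclusions, and that for a non-boxed $\varchi$ Part~1 is just ordinary height-preserving weakening (Lemma~\ref{lemstructural}).

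The core is a case distinction on the rule applied at a given inference, with conclusion $C$ and premises $P_i$. For every non-modal rule the grade is constant across the inference, $g(P_i)=g(C)$, so the formulas added by the translation are identical in all sequents of that inference and appear merely as extra left context; the rule then applies unchanged and axioms remain axioms. The only interesting case is $\rsch_{\rm SL}$, where $g$ jumps by one: with $C=(\bx\Sig,\Pi,\bx\Ga\seq\bx\psi)$ and premise $P=(\Pi,\dbx\Ga,\bx\psi\seq\psi)$ one has $g(P)=g(C)+1$. Here I would exhibit $C_\varchi$ (resp.\ $C^\varchi$) as the conclusion of a new $\rsch_{\rm SL}$-instance with premise $P_\varchi$ (resp.\ $P^\varchi$), in which $\bx\varchi$ is absorbed into the kept boxed context $\bx\Ga$. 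The crucial arithmetic is that unboxing $\bx\varchi$ contributes exactly one extra $\varchi$ to the premise, precisely the increment demanded by $g(P)=g(C)+1$: a non-boxed $\varchi$ is put into $\Pi$ (legitimate, since $\Pi$ only forbids boxed formulas), so its multiplicity rises from $\varchi^{g(C)}$ to $\varchi^{g(P)}$, whereas a boxed $\varchi$ needs at most one copy, the surplus occurrence in the conclusion being absorbed into the discarded context $\bx\Sig$. For Part~1 the analogous check uses that a boxed $\varchi$ sitting in a grade-$0$ sequent is introduced into $\bx\Sig$ at the lowest $\rsch_{\rm SL}$ and thereafter discarded, so it need not propagate further up.

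I expect the bookkeeping in this $\rsch_{\rm SL}$ case to be the main obstacle. One must track the multiplicities of $\varchi$ and $\bx\varchi$ simultaneously, split the conclusion's boxed formulas correctly between the kept part $\bx\Ga$ (which reappears, unboxed, in the premise) and the discarded part $\bx\Sig$ (which does not), and confirm that the three clauses in the definitions of $S^\varchi$ and $S_\varchi$ are exactly what makes both sides of each transformed inference match. In particular, the boxed/non-boxed split built into those definitions is forced by the constraint that $\Pi$ contains no boxed formulas, together with the fact that $\rsch_{\rm SL}$ both unboxes $\bx\Ga$ and discards $\bx\Sig$.

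Finally, the ``moreover'' clause is a direct read-off of Part~2 applied to the displayed critical segment. Since $S$ is the endsequent, $g(S)=0$ and $S_\varchi$ adds just $\bx\varchi$; the inner $\rsch_{\rm SL}$-inference lies above at least one $\rsch_{\rm SL}$, so its premise $S'$ satisfies $g(S')\geq 2$ when $\varchi$ is not boxed, giving the weakening by $\bx\varchi,\varchi^{k}$ with $k=g(S')$ (and $k=1$ in the boxed case), hence by at least $\varchi$ at the top and by $\bx\varchi$ at the bottom. The segment $\calb'$ is, by definition of $\cald_{\footnotesize\varchi}$, the sequent-wise image of $\calb$, so nothing beyond instantiating the general construction is required.
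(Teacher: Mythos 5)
Your proposal is correct and takes essentially the same route as the paper's proof: reduction to a single formula $\varchi$, local verification that axioms and non-modal inferences (constant grade) survive the translation, the boxed/non-boxed case split at $\rsch_{\rm SL}$ where the added $\bx\varchi$ joins the kept context $\bx\Ga$ so that unboxing supplies exactly the one extra $\varchi$ matching $g(P)=g(C)+1$ (with a surplus boxed $\varchi$ introduced via $\bx\Sig$), and the ``moreover'' clause obtained as a direct instantiation. The only nuance worth tightening is that $g(S')=2$ holds exactly, not merely $g(S')\geq 2$: by Remark~\ref{remcritical} the segment $\calb$ contains no $\rsch_{\rm SL}$-inferences at all (this uses the restriction that $\Pi$ contains no boxed formulas), which is what pins down $k=g(S')=2$ in the displayed transformed segment.
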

\begin{proof}
The proof of 1.\ is straightforward. Without lose of generality, we show 2.\ for $\Theta = \{\chi \}$. It is clear that for every axiom $S$ in $\cald$, $S_{ \chi}$ is an axiom too. So all leafs of $\cald_{\footnotesize \varchi}$ are axioms. Therefore it suffices to show that all inference steps remain valid under translation $(\cdot)_{\varchi}$. Consider an application of a two-premise rule:
\[
 \infer[R]{S_1}{S_2 & S_3}
\]
If $R$ is an instance of a nonmodal rule $\rsch$, then $g(S_1)=g(S_2)=g(S_3)$. Thus sequents $S_i=(\Ga_i\seq \De_i)$ are in $(S_i)_{\footnotesize \varchi}$ all weakened with the same formula(s) on the left or remain all three  unchanged. Therefore the inference becomes, for $\Pi$ being either $\varnothing$, $\{\bx\varchi\}$, $\{\bx\varchi,\varchi\}$, $\{\bx\varchi,\varchi^{g(S_1)}\}$:  
\[
 \infer[R]{\Pi,\Ga_1 \seq \De_1}{\Pi,\Ga_2 \seq \De_2 & \Pi,\Ga_3 \seq \De_3}
\] 
This clearly is an instance of $\rsch$ as well. Single premise rules are treated in exactly the same way. For the case that $R$ is an instance of a modal rule, suppose it is of the form   
\[
 \infer[\rsch_{\rm SL}]{(S_1)\ \bx\Sig,\Pi,\bx\Ga \seq \bx\phi}{
  \deduce{(S_2)\ \Pi,\dbx\Ga,\bx\phi \seq \phi}{} } 
\]
Here $\Pi$ does not contain boxed formulas. We distinguish the case that $\varchi$ is a boxed formula and that it is not. In the first case, the inference becomes one of
\[
 \infer[\rsch_{\rm SL}]{\bx\Sig,\Pi,\bx\Ga,\bx\varchi,\varchi \seq \bx\phi}{
  \deduce{\Pi,\dbx\Ga,\bx\varchi,\varchi,\bx\phi \seq \phi}{} } 
 \ \ \ \ 
 \infer[\rsch_{\rm SL}]{\bx\Sig,\Pi,\bx\Ga,\bx\varchi\seq \bx\phi}{
  \deduce{\Pi,\dbx\Ga,\bx\varchi,\varchi,\bx\phi \seq \phi}{} } 
\]
depending on whether $g(S_1)>0$ or $g(S_1)=0$.
These are instances of $\rsch_{\rm SL}$, as the $\varchi$ in the conclusion of the leftmost case can be introduced because it is a boxed formula. If $\varchi$ is not a boxed formula, then because $g(S_2)=g(S_1)+1$, the inference becomes  
\[
 \infer[\rsch_{\rm SL}]{\bx\Sig,\Pi,\bx\Ga,\bx\varchi,\varchi^{g(S_1)} \seq \bx\phi}{
  \deduce{\Pi,\dbx\Ga,\bx\varchi,\varchi,\varchi^{g(S_1)},\bx\phi \seq \phi}{} } 
\]
Since $\varchi$ is not boxed, the formulas $\varchi^{g(S_1)}$ in the premise remain in the conclusion, and the inference is indeed valid. 

It is not hard to see that $\cald_{\footnotesize \varchi}$ is cutfree and has the same height as $\cald$. 
Finally, because for the endsequent $S$ we have $g(S)=0$, the endsequent of $\cald_{\footnotesize \varchi}$ is $(\Ga,\bx\varchi\seq \De)$. 
\end{proof}

\section{Cut-elimination}
 \label{seccutelimination}
In this section we show that the rule
\[
 \infer[{\it Cut}]{\Ga_1, \Ga_2 \seq \De}{\Ga_1 \seq \phi & \phi,\Ga_2 \seq \De}
\]
is admissible in $\LJSL$. This fact is used to prove, in Section~\ref{secequivalence}, the equivalence of this logic to its $\DY$ counterpart $\DYSL$. The proof of cut-elimination for $\LJSL$ is not straigthforward and uses a method similar to the one in \citep{giessen&iemhoff2019} for~$\LJGL$, which again is based on the method to prove cut-elimination for {\sf GL} from \citep{gore&ramanayake2012,valentini83}. The key idea behind these proofs is the use of a new measure on cuts, the width, which is defined as follows. 

Consider a topmost cut of the form
\[\small 
 \infer{\Ga_1,\Ga_2 \seq \psi}{
  \deduce[\cald_1]{\Ga_1 \seq \phi}{} & 
  \deduce[\cald_2]{\phi,\Ga_2 \seq \psi}{}
  }
\]
where $\cald_1$ and $\cald_2$ are cutfree derivations. The {\em width} of the cut equals $0$ if $\phi$ is not boxed. In case $\phi$ is boxed, the {\em width}  is the number of $\phi$-critical inferences over $\Ga_1 \seq \phi$.

The {\em cutdegree}, denoted \cd($\cald$), of $\cald$ is the maximal degree of the cutformulas in $\cald$. If a cut in~$\cald$ has degree \cd($\cald$), we call it a {\em maximal cut} and its cutformula a {\em maximal cutformula}. The {\em cutwidth}, \cw($\cald$), of $\cald$ is the maximal width among the maximal cuts in $\cald$. The {\em cutlevel\/}, \cl($\cald$), of $\cald$ is the maximal level among the maximal cuts in $\cald$ which have maximal width, i.e.\ which width is \cw($\cald$). We define
\[
 \cdwl(\cald) \defn (\cd(\cald),\cw(\cald),\cl(\cald)).
\] 
We consider these triples ordered by the lexicographic ordering, the {\em dwl-order}, and write $\cald' \cdwlord \cald$ whenever $\cdwl(\cald')$ comes before $\cdwl(\cald)$ in that ordering.

\begin{lemma}
 \label{lemnoncritical}
In $\LJSL$, if a cut of the form 
\begin{equation}\label{eqcritical}\small 
 \infer[{\it Cut}]{\bx\Sig_1,\Pi_1,\bx\Ga_1,\Pi_2,\bx\Ga_2\seq \psi}{
  \infer[\rsch_{\rm SL}]{\bx\Sig_1,\Pi_1,\bx\Ga_1 \seq \bx\phi}{\deduce[\cald]{\Pi_1,\dbx\Ga_1,\bx\phi\seq \phi}{} } & 
  \deduce[\cald']{\bx\phi,\Pi_2,\bx\Ga_2\seq \psi}{} }
\end{equation}
has width zero and $\cald$ is cutfree, then there is a cutfree derivation of $(\Pi_1,\dbx\Ga_1 \seq \phi)$. 
\end{lemma}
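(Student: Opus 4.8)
The plan is to read ``width zero'' as the purely structural statement that the displayed occurrence of $\bx\phi$ in the antecedent of the endsequent of $\cald$ is never principal in $\cald$, and then to simply erase that formula from the whole derivation. First I would unpack the width. The derivation of the left premise $\bx\Sig_1,\Pi_1,\bx\Ga_1 \seq \bx\phi$ ends with the displayed $\rsch_{\rm SL}$, whose diagonal formula is the succedent $\bx\phi$; following the ancestor relation of $\rsch_{\rm SL}$, the strict ancestor of this succedent $\bx\phi$ in the premise is precisely the antecedent occurrence of $\bx\phi$ in the endsequent $\Pi_1,\dbx\Ga_1,\bx\phi\seq\phi$ of $\cald$. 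A $\bx\phi$-critical inference over the left premise is therefore an $\rsch_{\rm SL}$-inference inside $\cald$ in which a strict ancestor of that antecedent $\bx\phi$ is principal (as a member of the boxed context $\bx\Ga$), with no intervening modal rule in which $\bx\phi$ is principal. Thus ``width zero'' says that no such inference occurs.

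Next I would show that width zero forces this antecedent $\bx\phi$ to be non-principal everywhere in $\cald$. In $\LJSL$ the only rule in which a boxed antecedent formula can be principal is $\rsch_{\rm SL}$. Suppose, toward a contradiction, that some strict ancestor of the antecedent $\bx\phi$ were principal in an $\rsch_{\rm SL}$-inference of $\cald$, and let $R$ be the lowest such inference. I claim $R$ is $\bx\phi$-critical over the left premise. The first condition holds by the choice of $R$. For the second, the $\rsch_{\rm SL}$ producing the left premise already has $\bx\phi$ principal as its diagonal, so it suffices to rule out any \emph{further} $\rsch_{\rm SL}$ between it and $R$ in which $\bx\phi$ is principal: one principal below $R$ would contradict minimality, while at any modal rule on the path that merely carries an ancestor of $\bx\phi$ without it being principal, $\bx\phi$ cannot sit in $\Pi$ (which contains no boxed formulas), hence must sit in the introduced context $\bx\Sig$, where it has no ancestor in the premise and so cannot continue the chain up to $R$. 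Therefore no such intervening inference exists, $R$ is critical, contradicting width zero. Hence no ancestor of the antecedent $\bx\phi$ is ever principal in $\cald$.

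Finally I would prove the deletion by induction on the height of $\cald$: if a boxed antecedent formula $\bx\phi$ is never principal, then erasing it and all its strict ancestors yields a cutfree derivation of the same height. In the base case $\bx\phi$ lies in the context of an axiom $\mathit{At}$ or $L\bot$ (being boxed, it is neither the principal atom nor $\bot$), so deleting it leaves an axiom. At a nonmodal inference $\bx\phi$ is necessarily a side formula, so I would apply the induction hypothesis to the premises and reapply the rule with the smaller antecedent. At an $\rsch_{\rm SL}$-inference the tracked $\bx\phi$ is non-principal and, by the dichotomy above, lies in $\bx\Sig$ and has no ancestor in the premise; I would simply drop it from $\bx\Sig$, leaving the premise derivation untouched, which is again a valid instance of $\rsch_{\rm SL}$. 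Applying this deletion to $\cald$, whose endsequent is $\Pi_1,\dbx\Ga_1,\bx\phi \seq \phi$, produces the desired cutfree derivation of $\Pi_1,\dbx\Ga_1 \seq \phi$.

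The deletion induction is routine; the one step requiring care is the translation in the second paragraph, where one must exclude a ``hidden'' principal occurrence of $\bx\phi$ lurking above an intervening modal rule. This is precisely where the restriction that $\Pi$ carries no boxed formulas, together with the fact that the formulas of $\bx\Sig$ are merely introduced and so have no premise ancestor, does the work: it guarantees that a boxed antecedent ancestor cannot survive passing a non-principal $\rsch_{\rm SL}$, so the lowest principal occurrence is genuinely critical and width zero really does mean ``never principal''.
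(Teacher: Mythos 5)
Your proof is correct and takes essentially the same route as the paper's: width zero is converted into the statement that no strict ancestor of the antecedent $\bx\phi$ is ever principal in $\cald$, after which all such ancestors are erased, every inference remaining valid (in particular $\rsch_{\rm SL}$, where a non-principal boxed antecedent formula must lie in the introduced context $\bx\Sig$ and has no ancestor in the premise). The only difference is presentational: you spell out, via the lowest principal occurrence and the observation that the ancestor chain cannot survive a non-principal $\rsch_{\rm SL}$ because $\Pi$ contains no boxed formulas, the step from width zero to ``never principal'' that the paper treats as immediate.
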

\begin{proof}
As the cut has width zero, in any inference in the left premise, any strict ancestor of the indicated $\bx\phi$ in the antecedent of the conclusion of $\cald$ is not principal. Therefore in any inference in $\cald$ (strict ancestors of) $\bx\phi$ occur either in both conclusion and premises or in none, or the inference has the following form
\[\small 
 \infer[\rsch_{\rm SL}]{\bx\phi,\bx\Sig,\Pi,\bx\Ga \seq \bx\psi}{\Pi,\dbx\Ga,\bx\psi \seq \psi}
\] 
All these inferences remain valid if the strict ancestors of the occurrence of $\bx\phi$ that we are considering are removed in the antecedents, if present. The same applies to axioms. Thus removing in the antecedents of the sequents in $\cald$ bottom-up the strict ancestors of $\bx\phi$ as long as they are present, results in a cutfree proof of $(\Pi_1,\dbx\Ga_1 \seq \phi)$.
\end{proof}

\begin{theorem}[Cut-elimination]
 \label{thmcutadm}
\\
The {\it Cut} Rule is admissible in $\LJSL$.
\end{theorem}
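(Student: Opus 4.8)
The plan is to prove cut-admissibility by induction on the $\cdwl$-order $\cdwlord$, eliminating a topmost maximal cut and showing that each transformation strictly lowers the triple $(\cd,\cw,\cl)$. So I would fix a derivation $\cald$ containing cuts, pick a maximal cut that is topmost among the maximal cuts of maximal width, and argue that it can be replaced by a derivation $\cald'$ with $\cald' \cdwlord \cald$; iterating then removes all cuts. The bulk of the argument is the usual case analysis on the last rules applied in the two premises of the chosen cut.

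\medskip

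First I would dispose of the easy cases. If the cutformula $\phi$ is not principal in the last inference of the left premise, or not principal in the last inference of the right premise, I permute the cut upward past that inference in the standard way; this keeps $\cd$ the same and does not raise $\cw$, while lowering $\cl$ (the level), so we descend in $\cdwlord$. The genuinely reductive (principal/principal) cases are handled by the usual connective reductions, replacing a cut on $\phi \circ \psi$ by cuts on the strictly smaller $\phi,\psi$, each of which lowers $\cd$. Height-preserving admissibility of weakening, contraction and the inversion lemmas (Lemma~\ref{lemstructural}) are exactly what is needed to make these propositional reductions go through, together with the \emph{Falsum rule} clause for the $L\bot$ cases.

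\medskip

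The main obstacle, as the introduction warns, is the modal principal case: the cut in \eqref{eqcritical} where $\bx\phi$ is principal on the left via $\rsch_{\rm SL}$. Here the width measure and Lemma~\ref{lemweakening} do the real work, in the spirit of Valentini and Gore--Ramanayake. If the cut has width zero, Lemma~\ref{lemnoncritical} already produces a cutfree proof of $(\Pi_1,\dbx\Ga_1 \seq \phi)$, from which I can rebuild the conclusion. If the width is positive, I would reduce the number of $\phi$-critical inferences: I locate an uppermost $\bx\phi$-critical inference, cut there against $\cald$ to eliminate one critical occurrence, and use \emph{Strong weakening} (Lemma~\ref{lemweakening}, part~2) to reinsert the $\bx\varchi$/$\varchi^{g}$ context along the critical segment so that every $\rsch_{\rm SL}$-inference on the segment stays valid. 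The net effect is a collection of cuts whose cutformulas again have degree $\cd(\cald)$ but whose width is strictly smaller, so $\cw$ drops while $\cd$ is unchanged and we again descend in $\cdwlord$. This is precisely the step where the restriction that $\Pi$ contains no boxed formulas and the presence of $\bx\Sig$ in $\rsch_{\rm SL}$ are used: they guarantee (via Remark~\ref{remcritical}) that the critical segment has no intervening $\rsch_{\rm SL}$-applications on $\bx\phi$, so the weakening transformation is well-defined and the critical count genuinely decreases.

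\medskip

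Finally I would assemble these cases into a single induction: given any $\cald$ with a cut, choose a maximal cut of maximal width and least level, apply whichever reduction its shape dictates, and observe that in every case the resulting derivation is strictly smaller in the $\cdwl$-order. Since $\cdwlord$ is a well-founded (lexicographic) order, the process terminates in a cutfree derivation of the same endsequent, establishing admissibility of $\it Cut$ in $\LJSL$.
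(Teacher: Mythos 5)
Your overall architecture coincides with the paper's: the same lexicographic measure $(\cd,\cw,\cl)$, reduction of topmost cuts with cut-free premise derivations, permutations in the non-principal cases, Lemma~\ref{lemnoncritical} for width zero, and Lemma~\ref{lemweakening} for positive width. But in the positive-width modal case your sketch omits the one idea that makes the width actually decrease. Strong weakening by itself does not eliminate any $\bx\phi$-critical inference: in the translated derivation $(\overline{\cald}_1)_{\varchi}$ the critical inference is still present, with $\bx\phi$ still principal, so any cut whose left premise is this weakened derivation (or the original $\cald_1$) has exactly the same width as the cut you started with --- the width is a property of the derivation of the \emph{left premise}, not of the location at which you cut. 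Hence ``cut there against $\cald$ to eliminate one critical occurrence'' leaves $\cw$ unchanged, and the induction stalls. (A smaller slip in the same spirit: in the implication reduction not all replacement cuts lower the degree --- the cut against the left premise of $L\!\imp$ is still on $\phi\imp\psi$ and descends by level, not degree.)

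What the paper actually does is this: the translation $(\cdot)_{\varchi}$ is engineered, via the $\rsch_{\rm SL}$-grade bookkeeping, so that the premise of the critical inference acquires $k=g(S')\geq 1$ copies of $\varchi$ in its antecedent. Consequently the sequent $S_4$ obtained by deleting $\dbx\phi$ from that premise has its succedent $\varchi$ occurring among its antecedent formulas and is therefore cut-free derivable outright, by the extended-axiom clause of Lemma~\ref{lemstructural}. Replacing the whole subderivation above the critical inference by this trivial derivation yields $\cald_1^\circ$, a cut-free proof of $(\Pi_1,\bx\Ga_1,\bx\varchi\seq\bx\phi)$ with one fewer critical inference; only \emph{then} do the cuts on $\bx\phi$ with left premise $\cald_1^\circ$ ({\it Cut}$_3$, {\it Cut}$_4$) have strictly smaller width, and the remaining cut on $\phi$ ({\it Cut}$_2$) smaller degree. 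Moreover, a reassembly step is still required that your sketch stops short of: a second strong weakening $(\overline{\cald}_1)_{\Ga_1}^{\Pi_1}$ into which the cut-free result $\cald_4$ is grafted, so that in the rebuilt derivation the topmost $\bx\phi$ of the segment is \emph{introduced} rather than principal, making the final {\it Cut}$_5$ of strictly smaller width; without it you never recover the original endsequent $(\bx\Sig_1,\bx\Sig_2,\Pi_1,\Pi_2,\bx\Ga_1,\bx\Ga_2\seq\bx\psi)$. So the plan is the right one, but the extended-axiom replacement and the regrafting are the missing load-bearing steps, not routine details.
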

\begin{proof}
Let $\af$ denote $\af_{\LJSL}$. Following the corrected version \citep{troelstra98} of the cut elimination proof for $\LJ$ in \citep{troelstra&schwichtenberg96}, we successively eliminate cuts from the proof, always considering those cuts that have no cuts above them, the {\em topmost} cuts. For this it suffices to show that for cutfree proofs $\cald_1$ and $\cald_2$, the following proof $\cald$ can be transformed into a cutfree proof $\cald'$ of the same endsequent: 
\[\small 
 \infer[{\it Cut}]{\Ga_1, \Ga_2 \seq \De}{
  \deduce[\cald_1]{\Ga_1 \seq \phi}{} & 
  \deduce[\cald_2]{\phi,\Ga_2 \seq \De}{} 
 }
\]
This is proved by induction on the dwl-order $\cdwlord$ defined above. We use the fact that $\LJSL$ is closed under weakening and contraction implicitly at various places. 

There are three possibilities:

\begin{enumerate}
\item at least one of the premises is an axiom;
\item both premises are not axioms and the cutformula is not principal in at least one of the premises;
\item the cutformula is principal in both premises, which are not axioms.  
\end{enumerate}

1.\ As in \citep{troelstra&schwichtenberg96}, straightforward, by checking all possible cases: 
If $\cald_1$ is axiom $L\bot$, then we let $\cald'$ be the instance $S$ of that axiom. If $\cald_2$ is axiom $L\bot$ and $\bot$ is not the cutformula, then we let $\cald'$ be the instance $S$ of that axiom. If $\bot$ is the cutformula, then $(\Ga_1\seq\bot)$ has a cutfree derivation. Thus so has $(\Ga_1,\Ga_2\seq\De)$ by Lemma~\ref{lemstructural}, where we use the additional fact that in this proof no new cuts are introduced with respect to $\cald_1$. Therefore we let $\cald'$ be this proof. 

Assume both premises are not instances of $L\bot$. 
If~$\cald_1$ is axiom {\it At}, then $\phi$ is an atom. If~$\cald_2$ also is an instance of {\it At}, then  $S$ is an instance of {\it At}, so we let $\cald'$ be that instance. If~$\cald_2$ is not an axiom, then $\phi$ cannot be principal in its last inference, because it is an atom. We obtain $\cald'$ by {\em cutting at a lower level}, by which mean the following. Suppose the last inference of $\cald_2$ is $R\!\imp$:
\[
 \infer[{\it Cut}]{\Ga_1,\Ga_2\seq \psi_1\imp\psi_2}{
  \deduce{\Ga_1 \seq \phi}{\cald_1} 
  & 
  \infer{\Ga_2,\phi \seq \psi_1 \imp \psi_2}{
   \deduce{\Ga_2,\phi,\psi_1 \seq \psi_2}{\cald_2'} }
 }
\]
Consider the proof  $\cald_3$: 
\[  
 \infer[{\it Cut}]{\Ga_1,\Ga_2,\psi_1 \seq \psi_2}{
  \deduce{\Ga_1 \seq \phi}{\cald_1}
  & 
   \deduce{\Ga_2,\phi,\psi_1 \seq \psi_2}{\cald_2'} 
 }  
\]
Since $\cald_3 \cdwlord \cald$, there is a cutfree proof $\cald_3'$ of $(\Ga_1,\Ga_2,\psi_1 \seq \psi_2)$. Hence the following derivation is the desired cutfree proof $\cald'$:
\[ 
 \infer{\Ga_1,\Ga_2\seq \psi_1\imp\psi_2}{
 \deduce[\cald_3']{\Ga_1,\Ga_2,\psi_1 \seq \psi_2}{}
 } 
\]
The cases where the last inference of $\cald_2$ is another rule than $R\imp$ are treated in a similar way. 

The case that $\cald_1$ is not an axiom and $\cald_2$ is an instance of {\it At} remains. Here we also cut at a lower level, the proof is completely analogous to the case just treated.

2.\ First, the case that $\phi$ is not principal in the last inference of $\cald_1$. Thus the last inference in $\cald_1$ is one of the nonmodal rules $\rsch$ of $\LJSL$. In case $\rsch$ is L$\imp$, the last part of the proof looks as follows:
\[\small 
  \infer[{\it Cut}]{\Ga,\Ga_2, \psi \imp\psi' \seq \De}{
  \infer[\rsch]{\Ga,\psi\imp\psi'\seq \phi}{
   \deduce[\cald_1']{\Ga,\psi\imp\psi' \seq \psi}{} & 
   \deduce[\cald_1'']{\Ga,\psi' \seq \phi}{}} & 
  \deduce[\cald_2]{\Ga_2,\phi \seq \De}{} 
  }
\] 
where $\Ga_1 = \Ga,\psi\imp\psi'$. 
The cut can be pushed upwards in the following way (double lines suppress applications of weakening):  
\[\small 
  \infer[\rsch]{\Ga,\Ga_2, \psi\imp\psi'\seq \De}{
  \infer={\Ga,\Ga_2,\psi\imp \psi' \seq \psi }{
   \deduce[\cald_1']{\Ga,\psi\imp \psi' \seq \psi}{} } 
  & 
  \infer[{\it Cut}]{\Ga,\Ga_2,\psi' \seq \De}{
   \deduce[\cald_1'']{\Ga,\psi' \seq \phi}{} & \deduce[\cald_2]{\Ga_2,\phi \seq \De}{} } 
  }
\]
Thus we obtain a proof of $(\Ga_1,\Ga_2,\psi\imp\psi' \seq \De)$ with a cut of the same degree and at most the same width as the cut in $\cald$ but of lower level, and the induction hypothesis can be applied. 
Other nonmodal rules can be treated in a similar way. 

Second, the case that $\phi$ is not principal in the last inference of $\cald_2$. The nonmodal rules are handled as in the previous case. We treat the remaining case, where the lower part of $\cald$ takes one of the following forms:
\[\small 
  \infer{\Ga_1,\bx\Sig,\Pi,\bx\Ga \seq \bx\psi}{
  \deduce[\cald_1]{\Ga_1\seq \phi}{} &   
  \infer[\rsch_{\rm SL}]{\phi,\bx\Sig,\Pi,\bx\Ga \seq \bx \psi}{\deduce[\cald'_2]{\phi,\Pi,\dbx\Ga,\bx\psi \seq \psi}{} } }
  \ \ \ \ 
  \infer{\Ga_1,\bx\Sig,\Pi,\bx\Ga \seq \bx\psi}{
  \deduce[\cald_1]{\Ga_1\seq \phi}{} &   
  \infer[\rsch_{\rm SL}]{\phi,\bx\Sig,\Pi,\bx\Ga \seq \bx \psi}{\deduce[\cald'_2]{\Pi,\dbx\Ga,\bx\psi \seq \psi}{} } }
\] 
depending on whether $\phi$ is not a boxed formula (the leftmost case) or is a boxed formula (the rightmost case). 
Here $\Ga_2=\bx\Sig,\Pi,\bx\Ga$ and $\Pi$ contains no boxed formulas. 
In the rightmost case, consider the following cutfree proof:
\[\small 
  \infer[\rsch_{\rm SL}]{\bx\Sig,\Pi,\bx\Ga \seq \bx \psi}{
   \deduce[\cald'_2]{\Pi,\dbx\Ga,\bx\psi \seq \psi}{} 
  } 
\] 
Closure under weakening implies the existence of a cutfree proof of $(\Ga_1,\bx\Sig,\Pi,\bx\Ga \seq \bx \psi)$. 
In the leftmost case, let $\Ga',\Pi'$ be such that $\Ga_1 = \bx\Ga' \cup \Pi'$ and $\Pi'$ does not contain boxed formulas. The following is a proof of the same endsequent of the same degree and width but of lower level (double lines suppress applications of weakening):
\[\small 
 \infer[\rsch_{\rm SL}]{\Ga_1,\bx\Sig,\Pi,\bx\Ga \seq \bx\psi}{
 \infer=[{\it Weakening}]{\dbx\Ga',\Pi',\Pi,\dbx\Ga,\bx\psi \seq \psi}{
  \infer{\Ga_1,\Pi,\dbx\Ga,\bx\psi \seq \psi}{
   \deduce[\cald_1]{\Ga_1\seq \phi}{} & 
   \deduce[\cald'_2]{\phi,\Pi,\dbx\Ga,\bx\psi \seq \psi}{} } } }
\] 

3.\ The cutformula is principal in both premises, which are not axioms. We distinguish by cases according to the form of the cutformula, and treat implications and boxed formulas. 

If the cutformula is an implication, the last inference has the following form: 
\[\small 
 \infer[{\it Cut}]{\Ga_1,\Ga_2 \seq \De}{
  \infer[R\!\imp]{\Ga_1 \seq \phi\imp\psi}{\deduce[\cald_1']{\Ga_1,\phi \seq \psi}{} } & 
  \infer[L\!\imp]{\Ga_2,\phi\imp\psi \seq \De}{
   \deduce[\cald_2']{\Ga_2,\phi\imp\psi \seq \phi}{} & 
   \deduce[\cald_2'']{\Ga_2,\psi \seq \De}{} } 
  }
\]
This is replaced by the proof 
\[\small 
 \infer[\it Cut]{\Ga_1,\Ga_1,\Ga_2,\Ga_2 \seq \De}{
  \infer[\it Cut]{\Ga_1,\Ga_1,\Ga_2 \seq \psi}{
   \infer[\it Cut]{\Ga_1,\Ga_2 \seq \phi}{
   \infer{\Ga_1 \seq \phi\imp\psi}{\deduce[\cald_1']{\Ga_1,\phi \seq \psi}{} } & 
   \deduce[\cald_2']{\Ga_2,\phi\imp\psi \seq \phi}{} } & 
  \deduce[\cald_1']{\Ga_1,\phi \seq \psi}{} } & 
  \deduce[\cald_2'']{\Ga_2,\psi \seq \De}{}
 }
\]
in which each cut either is of lower degree or of the same degree and width but of lower level as the cut in $\cald$. 

If the cutformula is a boxed formula, then the proofs of the premises end with an application of $\rsch_{\rm SL}$:
\[\small 
 \infer[{\it Cut}_1]{\bx\Sig_1,\bx\Sig_2,\Pi_1,\Pi_2,\bx\Ga_1,\bx\Ga_2 \seq \bx\psi}{ 
  \infer[\rsch_{\rm SL}]{\bx\Sig_1,\Pi_1,\bx\Ga_1\seq \bx\phi}{
   \deduce[\cald_1']{\Pi_1,\dbx\Ga_1,\bx\phi \seq \phi}{} } & 
  \infer[\rsch_{\rm SL}]{\bx\Sig_2,\Pi_2,\bx\Ga_2,\bx\phi \seq\bx\psi}{
   \deduce[\cald_2']{\Pi_2,\dbx\Ga_2,\dbx\phi,\bx\psi\seq\psi}{}} } 
\]
where the $\Pi_i$ do not contain boxed formulas. 
We distinguish the cases that the width of the cut is zero and that it is bigger than zero. In the first case, there is a cutfree proof $\cald_3$ of $(\Pi_1,\dbx\Ga_1 \seq \phi)$ by Lemma~\ref{lemnoncritical}. Therefore in the following derivation all cuts are either of lower degree or of the same degree and width but of lower level than {\it Cut}$_1$.
\begin{equation*}\small 
   \infer[{\it Cut}_2]{\Pi_1,\Pi_1,\Pi_2,\bx\Ga_1,\dbx\Ga_1,\dbx\Ga_2,\bx\psi \seq \psi}{
    \deduce[\cald_3]{\Pi_1,\dbx\Ga_1\seq \phi}{} & 
    \infer[{\it Cut}_3]{\phi,\Pi_1,\Pi_2,\bx\Ga_1,\dbx\Ga_2,\bx\psi \seq \psi}{
     \infer{\Pi_1,\bx\Ga_1 \seq \bx\phi}{
      \deduce[\cald_1']{\Pi_1,\dbx\Ga_1,\bx\phi \seq \phi}{} } & 
     \deduce[\cald_2']{\dbx\phi,\Pi_2,\dbx\Ga_2,\bx\psi \seq \psi}{} } }   
\end{equation*}
Thus the induction hypothesis can be applied, which in combination with the closure under contraction yields a cutfree proof of $(\Pi_1,\Pi_2,\dbx\Ga_1,\dbx\Ga_2,\bx\psi \seq \psi)$. An application of $\rsch_{\rm SL}$ gives the desired 
cutfree proof of $(\bx\Sig_1,\bx\Sig_2,\Pi_1,\Pi_2,\bx\Ga_1,\bx\Ga_2 \seq \bx\psi)$.

Finally, we treat the case in which the width of {\it Cut}$_1$ is bigger than zero. Figure~\ref{figcut} might help to understand all steps that we will take. The width of {\it Cut}$_1$ is bigger than zero, so the derivation of the left premise contains the following 
$\bx\phi$-critical branch segment, where part $\calb$ does not contain applications of $\rsch_{\rm SL}$ and $\Ga_1' \subseteq \Ga_1$ by Remark~\ref{remcritical}:
\[\small 
  \infer[{\it Cut}_1]{\bx\Sig_1,\bx\Sig_2,\Pi_1,\Pi_2,\bx\Ga_1,\bx\Ga_2 \seq \bx\psi}{
   \infer[\rsch_{\rm SL}]{\bx\Sig_1,\Pi_1,\bx\Ga_1\seq \bx\phi}{
    \deduce[\calb]{\Pi_1,\dbx\Ga_1,\bx\phi \seq \phi}{
     \infer[\rsch_{\rm SL}]{\bx\Sig_3,\Pi_3,\bx\Ga_1,\bx\Ga_3,\bx\phi \seq \bx\varchi}{
      \deduce{\Pi_3,\dbx\Ga_1',\dbx\Ga_3,\dbx\phi,\bx\varchi \seq \varchi}{\vdots} } } } 
  & 
  \deduce{\bx\Sig_2,\Pi_2,\bx\Ga_2,\bx\phi\seq \bx\psi}{\cald_2} }
\]
Recall that $\cald_1$ is the derivation in $\cald$ of $(\bx\Sig_1,\Pi_1,\bx\Ga_1\seq \bx\phi)$ and $\overline{\cald}_1$ is by definition equal to $\cald_1$ except for the last inference, which is still an application of $\rsch_{\rm SL}$, 
but one in which~$\bx\Sig_1$ is not introduced:
\[\small 
   \infer[\rsch_{\rm SL}]{\Pi_1,\bx\Ga_1\seq \bx\phi}{
    \deduce[\calb]{\Pi_1,\dbx\Ga_1,\bx\phi \seq \phi}{
     \infer[\rsch_{\rm SL}]{\bx\Sig_3,\Pi_3,\bx\Ga_1,\bx\Ga_3,\bx\phi \seq \bx\varchi}{
      \deduce{\Pi_3,\dbx\Ga_1',\dbx\Ga_3,\dbx\phi,\bx\varchi \seq \varchi}{\vdots} } } } 
\]
Let $S_3$ be the sequent at the top of the segment, $S_3=(\Pi_3,\dbx\Ga_1',\dbx\Ga_3,\dbx\phi,\bx\varchi \seq \varchi)$.
Since there is no $\rsch_{\rm SL}$-inference in $\calb$, this means that $g_{\overline{\cald}_1}(S_3)=2$, all sequents in $\calb$ have $\rsch_{\rm SL}$-grade 1, and the endsequent of $\overline{\cald}_1$ has $\rsch_{\rm SL}$-grade 0.  
Therefore by Lemma~\ref{lemweakening}, $(\overline{\cald}_1)_{\footnotesize\varchi}$ is a derivation in which the 
$\bx\phi$-critical branch segment becomes the following, where $\calb'$ does not contain applications of $\rsch_{\rm SL}$ and $k$ equals 1 or 2 depending on whether $\varchi$ is boxed or not:
\[\small 
  \infer[\rsch_{\rm SL}]{\Pi_1,\bx\Ga_1,\bx\varchi\seq \bx\phi}{
   \deduce[\calb']{\Pi_1,\dbx\Ga_1,\bx\varchi, \varchi,\bx\phi \seq \phi}{
    \infer[\rsch_{\rm SL}]{\bx\Sig_3,\Pi_3,\bx\Ga_1,\bx\Ga_3,\bx\varchi,\varchi,\bx\phi \seq \bx\varchi}{
    \Pi_3,\dbx \Ga_1',\dbx\Ga_3,\dbx\phi,\bx\varchi,\varchi^k,\bx\varchi \seq \varchi} } } 
\]
Since $(S_3)_{\footnotesize\varchi}$ has $\varchi$ in its antecedent, the sequent $S_4=(\Pi_3,\dbx\Ga_1',\dbx\Ga_3,\bx\varchi,\varchi^k,\bx\varchi \seq \varchi)$ that one obtains by removing $\dbx\phi$ from the antecedent of $(S_3)_{\footnotesize \varchi}$ is still derivable and clearly has a cutfree derivation, say $\cald_3'$. 
Let $\cald_3$ be the derivation in $(\overline{\cald}_1)_{\footnotesize\varchi}$ of $S_3$ and let $\cald_1^\circ$ be the result of replacing in $(\overline{\cald}_1)_{\footnotesize\varchi}$ derivation $\cald_3$ by $\cald_3'$:
\[\small 
 (\overline{\cald}_1)_{\footnotesize\varchi}
  \left\{
   \begin{array}{c}
    \infer[\rsch_{\rm SL}]{\Pi_1,\bx\Ga_1,\bx\varchi\seq \bx\phi}{
     \deduce[\calb']{\Pi_1,\dbx\Ga_1,\bx\varchi, \varchi,\bx\phi \seq \phi}{
      \infer[\rsch_{\rm SL}]{\bx\Sig_3,\Pi_3,\bx\Ga_1,\bx\Ga_3,\bx\varchi,\varchi,\bx\phi \seq \bx\varchi}{
       \deduce{\Pi_3,\dbx\Ga_1',\dbx\Ga_3,\dbx\phi,\bx\varchi,\varchi^k,\bx\varchi \seq \varchi}{\cald_3}} } } 
   \end{array}
  \right.
  \ \ \ \ 
 \cald_1^\circ 
  \left\{
   \begin{array}{c}
    \infer[\rsch_{\rm SL}]{\Pi_1,\bx\Ga_1,\bx\varchi\seq \bx\phi}{
     \deduce[\calb']{\Pi_1,\dbx\Ga_1,\bx\varchi, \varchi,\bx\phi \seq \phi}{
      \infer[\rsch_{\rm SL}]{\bx\Sig_3,\Pi_3,\bx\Ga_1,\bx\Ga_3,\bx\varchi,\varchi,\bx\phi \seq \bx\varchi}{
       \deduce{\Pi_3,\dbx\Ga_1',\dbx\Ga_3,\bx\varchi,\varchi^k,\bx\varchi \seq \varchi}{\cald_3'}} } } 
   \end{array}
  \right.
\]
Observe that $\cald_1^\circ$ is still a cutfree derivation, that its endsequent is still 
$(\Pi_1,\bx\Ga_1,\bx\varchi\seq \bx\phi)$, and that it contains one $\bx\phi$-critical inference less than $\cald_1$.

Consider the following proof: 
\[\small 
  \infer[{\it Cut}_2]{\Pi_1^3,\Pi_3,\bx\Ga_1^2,\dbx\Ga_1,\dbx\Ga_1',\dbx\Ga_3,\bx\varchi^3 \seq \varchi}{
   \infer[{\it Cut}_3]{\Pi_1^2,\bx\Ga_1,\dbx\Ga_1,\bx\varchi \seq \phi}{
    \deduce[\cald_1^\circ]{\Pi_1,\bx\Ga_1,\bx\varchi\seq \bx\phi}{} 
    & 
    \deduce[\cald_1']{\Pi_1,\dbx\Ga_1,\bx\phi\seq \phi}{}} 
   &
   \infer[{\it Cut}_4]{\phi,\Pi_1,\Pi_3,\bx\Ga_1,\dbx\Ga_1',\dbx\Ga_3,\bx\varchi^2\seq \varchi}{
      \deduce[\cald_1^\circ]{\Pi_1,\bx\Ga_1,\bx\varchi\seq \bx\phi}{}  
    &
    \deduce[\cald_3]{\Pi_3,\dbx\Ga_1',\dbx\Ga_3,\dbx\phi,\bx\varchi\seq\varchi}{} }  
   }  
\]
Given that $\cald_1',\cald_1^\circ,\cald_3$ are cutfree, the only cuts in the proof are {\it Cut}$_2$, {\it Cut}$_3$, and {\it Cut}$_4$. By the observation above, the widths of {\it Cut}$_3$ and {\it Cut}$_4$ are lower than that of {\it Cut}$_1$. Thus there exist cutfree proofs of the two premises of {\it Cut}$_2$. Since {\it Cut}$_2$ has lower degree than {\it Cut}$_1$ the induction hypothesis applies. Together with the closure under contraction, and the fact that $\Ga_1' \subseteq \Ga_1$, this proves that there exists a cutfree proof, say $\cald_4$, of $S_5 = (\Pi_1,\Pi_3,\dbx\Ga_1,\dbx\Ga_3,\bx\varchi \seq \varchi)$. 

Observe that in 
$(\cald_1)^{\footnotesize\Pi_1}$, the segment becomes the following, where $\calb''$ does not contain applications of $\rsch_{\rm SL}$:
\[\small 
  \infer[\rsch_{\rm SL}]{\bx \Sigma_1,\Pi_1^2,\bx\Ga_1 \seq \bx\phi}{
   \deduce[\calb'']{\Pi_1^2,\dbx\Ga_1,\bx\phi \seq \phi}{
    \infer[\rsch_{\rm SL}]{
     \bx\Sig_3,\Pi_1,\Pi_3,\bx \Ga_1,\bx\Ga_3,\bx\phi \seq \bx\varchi}{
     \Pi_1,\Pi_3,\dbx\Ga_1',\dbx\Ga_3,\dbx\phi,\bx\varchi \seq \varchi} } } 
\]
Let $\cald_1^\triangledown$ denote the result of replacing, in $(\cald_1)^{\footnotesize\Pi_1}$, the derivation of the sequent at the top of the segment by derivation $\cald_4$ of $S_5$. And let $\cald'$ be the result of replacing $\cald_1$ in $\cald$ by $\cald_1^\triangledown$. Thus in $\cald'$ the segment, the part above the segments and the last inference become 
\[\small 
  \infer[{\it Cut}_5]{\bx\Sig_1,\bx\Sig_2,\Pi_1^2,\Pi_2,\bx\Ga_1,\bx\Ga_2 \seq \bx\psi}{
   \infer[\rsch_{\rm SL}]{\bx\Sigma_1,\Pi_1^2,\bx\Ga_1 \seq \bx\phi}{
    \deduce[\calb'']{\Pi_1^2,\dbx\Ga_1,\bx\phi \seq \phi}{
     \infer[\rsch_{\rm SL}]{
      \bx\Sig_3,\Pi_1,\Pi_3,\bx\Ga_1,\bx\Ga_3,\bx\phi \seq \bx\varchi}{
       \deduce{\Pi_1,\Pi_3,\dbx\Ga_1,\dbx\Ga_3,\bx\varchi \seq \varchi}
        {\cald_4} } } } 
  &
  \deduce[\cald_2]{\bx\Sig_2,\Pi_2,\bx\Ga_2,\bx\phi \seq \bx\psi}{} }
\]
Note that $\cald'$ still consists of valid inferences. 
The $\bx\phi$ at the top of the segment is now introduced in the first inference of the segment, and therefore no longer principal in that inference. Thus the width of {\it Cut}$_5$ is lower than that of {\it Cut}$_1$.

\begin{landscape}
\begin{figure}
\[\small
 \infer=[{\it Contraction}]{\bx\Sig_1,\bx\Sig_2,\Pi_1,\Pi_2,\bx\Ga_1,\bx\Ga_2 \seq \bx\psi}{
 \infer[{\it Cut}_5]{\bx\Sig_1,\bx\Sig_2,\Pi_1^2,\Pi_2,\bx\Ga_1,\bx\Ga_2 \seq \bx\psi}{
   \infer[\rsch_{\rm SL}]{\bx\Sig_1,\Pi_1^2,\bx\Ga_1 \seq \bx\phi}{
    \deduce[\calb'']{\Pi_1^2,\dbx\Ga_1,\bx\phi \seq \phi}{
     \infer[\rsch_{\rm SL}]{
      \bx\Sig_3,\Pi_1,\Pi_3,\bx\Ga_1,\bx\Ga_3,\bx\phi \seq \bx\varchi}{
       \infer=[{\it Weakening/Contraction}]{\Pi_1,\Pi_3,\dbx\Ga_1,\dbx\Ga_3,\bx\varchi \seq \varchi}{ 
        \infer[{\it Cut}_2]{\Pi_1^3,\Pi_3,\bx\Ga_1^2,\dbx\Ga_1,\dbx\Ga_1',\dbx\Ga_3,\bx\varchi^3 \seq \varchi}{
   \infer[{\it Cut}_3]{\Pi_1^2,\bx\Ga_1,\dbx\Ga_1,\bx\varchi \seq \phi}{
     \infer[\rsch_{\rm SL}]{\Pi_1,\bx\Ga_1,\bx\varchi\seq \bx\phi}{
     \deduce[\calb']{\Pi_1,\dbx\Ga_1,\bx\varchi, \varchi,\bx\phi \seq \phi}{
      \infer[\rsch_{\rm SL}]{\bx\Sig_3,\Pi_3,\bx\Ga_1,\bx\Ga_3,\bx\varchi,\varchi,\bx\phi \seq \bx\varchi}{
       \deduce{\Pi_3,\dbx\Ga_1',\dbx\Ga_3,\bx\varchi,\varchi^k,\bx\varchi \seq \varchi}{\cald_3'}} } } 
    & 
    \deduce[\cald_1']{\Pi_1,\dbx\Ga_1,\bx\phi\seq \phi}{}} 
   &
   \infer[{\it Cut}_4]{\phi,\Pi_1,\Pi_3,\bx\Ga_1,\dbx\Ga_1',\dbx\Ga_3,\bx\varchi^2\seq \varchi}{
       \infer[\rsch_{\rm SL}]{\Pi_1,\bx\Ga_1,\bx\varchi\seq \bx\phi}{
     \deduce[\calb']{\Pi_1,\dbx\Ga_1,\bx\varchi, \varchi,\bx\phi \seq \phi}{
      \infer[\rsch_{\rm SL}]{\bx\Sig_3,\Pi_3,\bx\Ga_1,\bx\Ga_3,\bx\varchi,\varchi,\bx\phi \seq \bx\varchi}{
       \deduce{\Pi_3,\dbx\Ga_1',\dbx\Ga_3,\bx\varchi,\varchi^k,\bx\varchi \seq \varchi}{\cald_3'}} } }    
    &
    \deduce[\cald_3]{\Pi_3,\dbx\Ga_1',\dbx\Ga_3,\dbx\phi,\bx\varchi\seq\varchi}{} }  
   }  
 } 
    } } } 
   &
  \deduce{\hspace{-3cm}\bx\Sig_2,\Pi_2,\bx\Ga_2,\bx\phi \seq \bx\psi}{\hspace{-1.5cm}\cald_2} }
  }
\]
\caption{Illustration of the main steps in the proof of Theorem~\ref{thmcutadm}. It does not contain $\cald_4$, but the proof that $\cald_4$ replaces, and it is not a proof, because it contains weakening and contraction steps that are not part of the calculus.}
 \label{figcut}
\end{figure}
\end{landscape}

Since their degrees are the same, the induction hypothesis can be applied to obtain a cutfree proof of the endsequent. 
Contraction implies that $(\bx\Sig_1,\bx\Sig_2,\Pi_1,\Pi_2,\bx\Ga_1,\bx\Ga_2 \seq \bx\psi)$ has a cutfree proof as well, which is what we had to show. 
\end{proof}

\section{Termination} 
 \label{sectermination}

Proof search in calculus \DY\ is terminating, which is one of the main reasons for its development. There are various concepts of termination. In this paper we use a very strict version, close to strong termination. A sequent calculus is \emph{strongly terminating} if and only if there is a well-founded ordering on sequents such that for all rules in the calculus the premises are smaller in this ordering than the conclusion. Decidability of the logic immediately follows from this. For $\DY$, we use the ordering $\ll$ as defined in Section \ref{seclogics}. It makes it possible to establish the equivalence between $\LJ$ and $\DY$ \citep{dyckhoff92}. Note that proof search in $\LJ$ is not strongly terminating but \emph{weakly terminating}, i.e. there exists a terminating process of deciding the derivability of a sequent involving a global check on sequents in the proof search \citep{troelstra98}.

Although the calculi $\DYK$ and $\DYKD$ from \citep{iemhoff18} are strongly terminating, this is no longer the case for $\DYGL$ and $\DYSL$. However, based on a termination proof for a sequent calculus for $\GL$ in \citep{bilkova06}, it is shown in \citep{giessen&iemhoff2019} that $\DYGL$ has a property very close to strong termination: if, in the process of bottom-up proof search, we stop searching along a branch whenever we reach a sequent $S$ for which $S^a\cap S^c$ is not empty, then the standard proof search always terminates. As in Lemma~\ref{lemstructural}, let us call such a sequent an {\em extended axiom}. We call this \emph{termination modulo extended axioms}. 
Using this, we can establish the equivalence between $\LJSL$ and $\DYSL$ as well.

\begin{theorem}\label{thmtermination}
Proof search in $\DYSL$ terminates modulo extended axioms.
\end{theorem}
\begin{proof}
Let $S$ be the sequent for which we perform the proof search. Let $c$ be the number of all boxes occurring in $S$. For any rule application in the proof search each premise is either an extended axiom or comes before the conclusion in the order $\ord^c$ defined in Section~\ref{seclogics}. The proof is completely analogous to the termination proof of $\DYGL$ in \citep{giessen&iemhoff2019}. 
Let us only look into the details for rule $\leftimprule_{\rm SL}$. Its conclusion is of the form $(\Pi, \Box \Gamma, \Box \phi \impl \psi \Impl \Delta)$ and its premises are $(\Pi, \boxdot \Gamma, \Box \phi, \Box \phi \impl \psi \Impl \phi)$
 and $(\Pi, \Box \Gamma, \psi \Impl \Delta)$ where $\Pi$ does not contain boxed formulas. It is straightforward that the second premise is smaller than the conclusion of the rule with respect to $\ord^c$. For the first premise we distinguish two cases. If $\Box \phi \in \Box \Gamma$, then the first premise is an extended axiom. If $\Box \phi \notin \Box \Gamma$, the antecedent of the first premise has more boxed formulas than the antecedent of the conclusion and, thus, this premise is smaller than the conclusion.
\end{proof}

In rule $\rsch_{\rm SL}$ we assume that $\Pi$ does not contain boxed formulas, but we might drop this condition while keeping termination modulo extended axioms as discussed in Lemma~\ref{lem systems equivalent}. Important to note is that the condition on the $\Pi$'s in rule $\impl_{\rm SL}$ is necessary for proving termination.

\section{Equivalence of $\LJSL$ and $\DYSL$}
 \label{secequivalence}
 
In this section we prove that $\LJSL$ and $\DYSL$ are equivalent, meaning that the calculi derive the same sequents. This implies that cut-elimination is admissible in the latter, a fact that can be useful in the application of these calculi. First, we prove, in the next section, a normal form theorem for proofs in the calculus $\LJSL$, a fact that enables us to establish, in the section after that, the equivalence between the two calculi.  

\subsection{Strict proofs in $\LJSL$}
A multiset is {\em irreducible} if it has no element that is a disjunction or a conjunction or falsum and for no atom $p$ does it contain both $p\imp\psi$ and $p$. A sequent $S$ is {\em irreducible} if $S^a$ is. A proof is {\em sensible} if its  last inference does not have a principal formula on the left of the form $p\imp\psi$ for some atom $p$ and formula $\psi$.\footnote{In \cite{iemhoff18} the requirement that the principal formula be on the left was erroneously omitted.} A proof in $\LJSL$ is {\em strict} if in the last inference, in case it is an instance of L$\imp$ with principal formula $\bx\phi \imp \psi$, the left premise is an axiom or the conclusion of an application of the modal rule $\rsch_{\rm SL}$. Note that because the formula in the succedent of the left premise is $\bx\phi$, in case the left premise is an instance of an axiom, it can only be an instance of $L\bot$. This implies that if $S$ is irreducible, the left premise cannot be an instance of an axiom and thus is required to be the conclusion of an application of a modal rule.

\begin{lemma} 
 \label{lemstrict} 
In $\LJSL$, every irreducible sequent that is provable has a sensible strict proof. 
\end{lemma}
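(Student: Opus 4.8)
The goal is to show that every provable irreducible sequent $S$ admits a \emph{sensible strict} proof. The plan is to start from an arbitrary cutfree proof of $S$, which exists by cut-elimination (Theorem~\ref{thmcutadm}), and then to massage its last inference into the required shape while keeping the proof cutfree. Since $S$ is irreducible, $S^a$ contains no disjunction, conjunction, or $\bot$, and for no atom $p$ both $p$ and $p\imp\psi$; by the invertibility results of Lemma~\ref{lemstructural} together with the definition of irreducibility, the last inference of a cutfree proof of $S$ cannot be an instance of an invertible left rule whose principal formula lies in $S^a$ (there is nothing of the right shape to be principal). This already rules out most rules and pins the last inference down to a small list of possibilities: a modal rule $\rsch_{\rm SL}$, a right rule ($R\en$, $R\!\of$, $R\!\imp$) when $S^s$ is nonempty and compound, an axiom, or a left implication $L\!\imp$ whose principal formula is of the form $\bx\phi\imp\psi$.

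The first task is \textbf{sensibility}. I would argue that if the last inference has principal formula $p\imp\psi$ on the left with $p$ an atom, then because $S$ is irreducible it cannot contain $p$ (irreducibility forbids both $p$ and $p\imp\psi$), and one can permute or eliminate this inference. Concretely, one inspects the left premise $(\Ga, p\imp\psi \seq p)$ of such an $L\!\imp$ step; since this premise is itself cutfree and provable, and its succedent is the atom $p$ which is not available in the antecedent, one shows it can only have been derived in a way that lets the whole $L\!\imp$ application be bypassed or pushed upward, yielding a proof whose last inference is no longer of this offending form. This is essentially the standard argument that $L\!\imp$ on an atomic-antecedent implication is dispensable once the matching atom is absent, and it is where the footnoted correction (principal formula on the left) matters.

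The second and genuinely modal task is \textbf{strictness}. Suppose the last inference is $L\!\imp$ with principal formula $\bx\phi\imp\psi$, so the left premise is $(\Ga, \bx\phi\imp\psi \seq \bx\phi)$. Strictness demands that \emph{this} left premise be itself derived by an axiom ($L\bot$, which is impossible when $S$ is irreducible because then $\bot\notin\Ga$) or by $\rsch_{\rm SL}$. So I would analyze a cutfree proof of the left premise: its succedent is the boxed formula $\bx\phi$, and I would push upward through any non-modal right/left steps, using invertibility and {\it Inversion L$\imp$} / {\it Inversion $\imp_{{\rm SL}}$} from Lemma~\ref{lemstructural} to normalize until the step introducing $\bx\phi$ on the right, which can only be $\rsch_{\rm SL}$, sits directly above the $L\!\imp$. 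The remaining intervening inferences must be shown to be permutable below the $L\!\imp$ (they act on side formulas of $\Ga$, not on $\bx\phi\imp\psi$ or $\bx\phi$), so that after rearrangement the left premise of the final $L\!\imp$ is exactly the conclusion of an $\rsch_{\rm SL}$ application.

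The induction should be run on a well-founded measure so that the rearrangements terminate. The natural choice is the ordering $\sml$ on sequents from Section~\ref{seclogics}, or a lexicographic combination of the height of the cutfree proof with $\sml$; each permutation either strictly decreases the height or replaces the endsequent by smaller ones to which the induction hypothesis applies. I expect the \textbf{main obstacle} to be the strictness step: one must check that the inferences sitting between the final $L\!\imp$ and the topmost $\rsch_{\rm SL}$ that introduces $\bx\phi$ really do commute past the $L\!\imp$ without touching the principal implication $\bx\phi\imp\psi$, and that this normalization is driven by a genuinely decreasing measure rather than looping. The inversion lemmas, in particular {\it Inversion $\imp_{{\rm SL}}$}, are exactly the tools that make these permutations height-preserving and hence keep the induction sound, so the bookkeeping of which formula is principal where, combined with the irreducibility constraints on $S^a$, is the crux of the argument.
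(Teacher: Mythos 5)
Your strictness step contains a genuine gap: you keep the final $L\!\imp$ with principal formula $\bx\phi\imp\psi$ fixed and try to normalize its left-premise subproof, by permutations, until an $\rsch_{\rm SL}$-inference sits directly above that $L\!\imp$. Such a normal form need not exist, so no terminating system of permutations can reach it. Consider the irreducible, provable sequent $S=((p\imp p)\imp\bx q,\ \bx q\imp r\seq \bx q)$. It has a cutfree proof ending with $L\!\imp$ on $\bx q\imp r$, whose left premise $((p\imp p)\imp\bx q,\ \bx q\imp r\seq\bx q)$ is provable --- via $L\!\imp$ on $(p\imp p)\imp\bx q$, whose right premise $(\bx q,\ \bx q\imp r\seq \bx q)$ is an extended axiom --- but this left premise is not an axiom and \emph{cannot} be the conclusion of $\rsch_{\rm SL}$: since its antecedent contains no boxed formula, the only candidate premise is $((p\imp p)\imp\bx q,\ \bx q\imp r,\ \bx q\seq q)$, which is underivable because no rule of $\LJSL$ extracts the atom $q$ from $\bx q$ on the left. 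This also refutes your claim that the step introducing $\bx\phi$ on the right ``can only be $\rsch_{\rm SL}$'': it can equally arise from an extended axiom created when an inner $L\!\imp$ puts $\bx\phi$ into the antecedent. Moreover, your intended permutation of $\rsch_{\rm SL}$ downward past intervening left rules is generally impossible, because the modal rule discards the succedent and unboxes its context. The missing idea is that the \emph{principal formula of the last inference must be allowed to change}.

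That is exactly what the paper's proof (following Lemma 1 of Dyckhoff) does, via a minimal-counterexample argument rather than iterated normalization. Take $S$ a counterexample with shortest leftmost branch; its last inference must be $L\!\imp$ with principal $\phi\imp\psi$, $\phi$ atomic or boxed (all other last inferences are vacuously sensible and strict, and irreducibility excludes the remaining left rules and axioms). The left premise $(\Ga,\phi\imp\psi\seq\phi)$ is irreducible with a strictly shorter proof, so by minimality it has a sensible strict proof $\cald_1$; since its succedent is atomic or boxed and it is irreducible, the last inference of $\cald_1$ must again be $L\!\imp$, with principal $\phi'\imp\psi'$ where $\phi'$ is not atomic ($\rsch_{\rm SL}$ is ruled out because it would already make the original proof strict and sensible). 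A \emph{single swap} of these two adjacent $L\!\imp$ inferences --- using Inversion from Lemma~\ref{lemstructural} to turn the proof of $(\Ga,\psi\seq\De)$ into one of $(\Ga',\psi,\psi'\seq\De)$ --- yields a proof of $S$ whose last inference has principal $\phi'\imp\psi'$; it is sensible since $\phi'$ is not atomic, and strict because, when $\phi'$ is boxed, the strictness of $\cald_1$ guarantees its left subproof ends in an axiom or $\rsch_{\rm SL}$. This handles your sensibility case by the same mechanism, with no measure beyond leftmost-branch length. Finally, your appeal to cut-elimination is harmless but superfluous: {\it Cut} is not a rule of $\LJSL$, so every proof is already cutfree.
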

\begin{proof}
This is proved in the same way as the corresponding lemma (Lemma 1) in \citep{dyckhoff92}. 
Arguing by contradiction, assume that among all provable irreducible sequents that have no sensible strict proofs, $S$ is such a sequent with the shortest proof, $\cald$, where the {\em length} of a proof is the length of its leftmost branch. Thus the last inference in the proof is an application 
\[
 \infer{\Ga,\phi\imp \psi \seq \De}{
 \deduce[\cald_1]{\Ga, \phi\imp \psi \seq \phi}{} & 
 \deduce[\cald_2]{\Ga, \psi \seq \De}{} }
\]
of L$\imp$, where $\phi$ is an atom or a boxed formula.  
Since $S^a$ is irreducible, $\bot \not\in S^a$ and if $\phi$ is an atom, $\phi\not\in S^a$. Therefore the left premise cannot be an axiom and hence is the conclusion of a rule, say $\rsch$. Since the succedent of the conclusion of $\rsch$ consists of an atom or a boxed formula, $\rsch$ is a left rule or $\rsch_{\rm SL}$. The latter case cannot occur, since the proof then would be strict and sensible. Thus $\rsch$ is a left rule. 

We proceed as in \citep{dyckhoff92}. Sequent $(\Ga, \phi\imp \psi \seq \phi)$ is irreducible and has a shorter proof than $S$. Thus its subproof $\cald_1$ is strict and sensible. Since the sequent is irreducible and $\phi$ is an atom or a boxed formula, the last inference of $\cald_1$ is L$\imp$ with a principal formula $\phi'\imp \psi'$ such that $\phi'$ is not an atom. Let $\cald'$ be the proof of the left premise $(\Ga, \phi\imp \psi \seq \phi')$. Thus the last part of $\cald$ looks as follows, where $\Pi,\phi'\imp \psi'=\Ga$.  
\[
 \infer{\Pi, \phi\imp \psi,\phi'\imp \psi' \seq \De}{
 \infer{\Pi, \phi\imp \psi,\phi'\imp \psi' \seq \phi}{
  \deduce[\cald']{\Pi, \phi\imp \psi,\phi'\imp \psi' \seq \phi'}{} & 
  \deduce[\cald'']{\Pi, \phi\imp \psi,\psi' \seq \phi}{}} & 
 \deduce[\cald_2]{\Pi, \psi,\phi'\imp \psi' \seq \De}{} }
\]
Consider the following proof of $S$.  
\[
 \infer{\Pi, \phi\imp \psi,\phi'\imp \psi' \seq \De}{
  \deduce[\cald']{\Pi, \phi\imp \psi,\phi'\imp \psi' \seq \phi'}{} & 
  \infer{\Pi, \phi\imp \psi,\psi'\seq \De}{ 
   \deduce[\cald'']{\Pi, \phi\imp \psi,\psi' \seq \phi}{} & 
   \deduce[\cald''']{\Pi,\psi,\psi' \seq \De}{} 
  } 
 }
\]
The existence of $\cald'''$ follows from Lemma~\ref{lemstructural} (Inversion) and the existence of $\cald_2$. The obtained proof is strict and sensible: In case $\phi'$ is not a boxed formula, this is straightforward. In case $\phi'$ is a boxed formula, it follows from the fact, observed above, that $\cald_1$ is strict and sensible. 
\end{proof}

\subsection{Equivalence}

\begin{theorem} [Equivalence]
 \label{thmequivalence}
\\
$\af_{\LJSL} S$ if and only if $\af_{\DYSL} S$. 
\end{theorem}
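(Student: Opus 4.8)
The plan is to prove the two inclusions $\af_{\DYSL}S \Rightarrow \af_{\LJSL}S$ and $\af_{\LJSL}S \Rightarrow \af_{\DYSL}S$ separately, in each case by showing that every rule of the one calculus is admissible in the other. Both calculi share the rules of the underlying modal propositional base, so the real work concerns the implication rules and the modal rules, and the genuinely hard part lies in a single place, as explained below.

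For the direction $\DYSL \subseteq \LJSL$ I would check each $\DYSL$-rule. The rules shared with $\LJm$ are immediate. The Dyckhoff implication rules $Lp\!\imp$, $L\en\!\imp$, $L\of\!\imp$ and $L\imp\!\imp$ are derivable in $\LJSL$ augmented with Cut, using the \IPC-interderivability of their principal formula with the formula appearing in the premise (for instance of $(\phi\en\psi)\imp\gamma$ with $\phi\imp(\psi\imp\gamma)$, and of $(\phi\imp\psi)\imp\gamma$ with $\psi\imp\gamma$); since Cut is admissible by Theorem~\ref{thmcutadm}, this yields cut-free $\LJSL$-proofs. For the modal rules I would simulate $\rsch_{\rm SL}^4$ and $\leftimprule_{\rm SL}$ by $\rsch_{\rm SL}$ together with the admissibility of weakening from Lemma~\ref{lemstructural}: writing the $\Sigma$ of $\rsch_{\rm SL}^4$ as $\bx\Sigma'\cup\Pi$ with $\Pi$ unboxed, one applies $\rsch_{\rm SL}$ with boxed context $\bx\Sigma'\cup\bx\Ga$ after weakening its premise by $\Sigma'$, and $\leftimprule_{\rm SL}$ is then obtained by combining this simulation with the ordinary $L\!\imp$ rule.

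For the harder direction $\LJSL \subseteq \DYSL$ the shared and right rules transfer directly and $\rsch_{\rm SL}$ is again simulated by $\rsch_{\rm SL}^4$ plus weakening, so everything reduces to showing that the Gentzen rule $L\!\imp$ is admissible in $\DYSL$. Here I would first reduce to irreducible sequents: a reducible antecedent is simplified by the rules $L\en$, $L\of$, $L\bot$ and $Lp\!\imp$, all present in $\DYSL$ and height-preservingly invertible in $\LJSL$ by Lemma~\ref{lemstructural}, each such step strictly decreasing the sequent in the weight order $\sml$. For an irreducible provable sequent I would invoke the normal-form Lemma~\ref{lemstrict} to obtain a sensible strict $\LJSL$-proof and translate it into $\DYSL$ by induction on a measure combining $\sml$-weight with proof length. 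The sensible strict shape is exactly what makes the last inference $\DYSL$-simulable: a final $\rsch_{\rm SL}$ becomes $\rsch_{\rm SL}^4$ plus weakening, and a final $L\!\imp$ whose principal formula $\phi\imp\psi$ has a \emph{boxed} antecedent has, by strictness (and since for an irreducible sequent the left premise cannot be an axiom), a left premise that is the conclusion of $\rsch_{\rm SL}$, which is precisely the pattern of $\leftimprule_{\rm SL}$. The cases where $\phi$ is a conjunction, disjunction or implication are handled by the matching Dyckhoff rules $L\en\!\imp$, $L\of\!\imp$, $L\imp\!\imp$, using the inversion lemmas and the induction hypothesis, the weight of the principal implication strictly decreasing in $\sml$.

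The main obstacle is exactly the admissibility of $L\!\imp$ in $\DYSL$, and within it the case in which the antecedent of the principal implication is atomic or boxed. This is the reason the preceding machinery was developed: sensibility (Lemma~\ref{lemstrict}) rules out an atomic antecedent at the bottom of an irreducible proof, while strictness forces a boxed antecedent to have been introduced by $\rsch_{\rm SL}$, and the dedicated Inversion~$\imp_{\rm SL}$ statement of Lemma~\ref{lemstructural} supplies the premise needed to fire $\leftimprule_{\rm SL}$. The remaining delicate point is to set up the induction so that the interleaving of the reduction-to-irreducible step with the normal-form translation stays well founded; I expect the lexicographic combination of the weight order $\sml$ with the length of the $\LJSL$-proof to achieve this.
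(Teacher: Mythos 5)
Your overall architecture coincides with the paper's: the direction $\af_{\DYSL}S\Rightarrow\af_{\LJSL}S$ via cut-admissibility (Theorem~\ref{thmcutadm}) and rule simulation, and the converse via reduction to irreducible sequents, Lemma~\ref{lemstrict} to force the last inference into $\DYSL$-simulable shape, and a case analysis in which strictness makes the left premise of an $L\!\imp$ with boxed antecedent the conclusion of $\rsch_{\rm SL}$, feeding $\leftimprule_{\rm SL}$. However, there is a genuine gap exactly at the point you defer: the induction measure. A lexicographic combination of the weight order $\sml$ with $\LJSL$-proof length does not work. If $\sml$ is the leading component, the measure \emph{increases} at the modal step: when the strict proof ends in $\rsch_{\rm SL}$ with conclusion $(\bx\Sig,\Pi,\bx\Ga\seq\bx\phi)$, the sequent to which you must apply the induction hypothesis, $(\bx\Sig,\Pi,\dbx\Ga,\bx\phi\seq\phi)$, is strictly \emph{heavier} in $\sml$, since read bottom-up the rule duplicates $\Ga$ as $\dbx\Ga$ and adds the diagonal formula $\bx\phi$ to the antecedent. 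If proof length is the leading component instead, the nonmodal implication cases break: there the premises of the Dyckhoff rules (e.g.\ $(\Ga,\phi_2\imp\psi\seq\phi_1\imp\phi_2)$ in the $L\imp\!\imp$ case) are shown $\LJSL$-provable only by invoking Cut and then cut-elimination, which gives no bound whatsoever on the height of the resulting cutfree proof, so the length component is uncontrolled.

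The paper's resolution is a different well-order, $\ord^c$ (Section~\ref{seclogics}, justified by Theorem~\ref{thmtermination}): fix $c$ as the number of boxed formulas in the endsequent and compare first by $c-b(S)$, where $b(S)$ counts distinct boxed formulas in the antecedent, and only then by $\sml$. At a backwards $\rsch_{\rm SL}$ step $b$ strictly increases because the diagonal formula $\bx\phi$ is new in the antecedent, which absorbs the weight increase; and this is also why the paper must split the modal case into two subcases you omit entirely: when $\bx\phi$ already occurs in $\bx\Sig$, $b$ does \emph{not} increase, and the premise $(\bx\Sig,\Pi,\dbx\Ga,\bx\phi\seq\phi)$ is instead an extended axiom (Lemma~\ref{lemstructural}), directly derivable in $\DYSL$ without the induction hypothesis --- this is precisely the ``termination modulo extended axioms'' phenomenon. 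So your proof goes through only after replacing your proposed measure by $\ord^c$ (or an equivalent device counting unused boxed formulas) and adding the extended-axiom escape; as stated, the well-foundedness of your induction fails at every modal inference.
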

\begin{proof}
The proof is an adaptation of the proof of Theorem 1 in \citep{dyckhoff92}. 

The direction from right to left is straightforward because $\LJSL$ is closed under the structural rules and {\it Cut}, but let's fill in some of the details. We use induction to the height of the proof of a sequent in $\DYSL$. Suppose $\af_{\DYSL}S$. If $S$ is an instance of an axiom, then clearly $\af_{\LJSL}S$ as well. Suppose $S$ is not an instance of an axiom and consider the last inference of the proof of $S$. We distinguish according to the rule $\rsch$ of which the last inference is an instance. 

If the rule is $Lp\!\imp$, then $S$ is of the form $\Ga,p,p\imp \phi \seq \De$. The premise is 
$\Ga,p,\phi \seq \De$, which, by the induction hypothesis, is derivable in $\LJSL$. It is not hard to show that $\Ga,p,p\imp \phi\seq \phi$ is also derivable in $\LJSL$. Applications of Cut and Contraction show that so is $S$.   

If the rule is $L\!\imp\!\imp$, then $S$ is of the form $\Ga, (\phi \imp \psi)\imp \gam \seq \De$
and the premises are $\Ga,\gam\seq \De$ and $\Ga,\psi\imp\gam\seq \phi\imp\psi$. The premises are derivable in $\LJSL$ by the induction hypothesis. It is not difficult to show that then $\Ga,(\phi \imp \psi) \imp \gam, \phi \seq \psi$ is derivable in~$\LJSL$ as well. Hence so is $\Ga,(\phi \imp \psi) \imp \gam\seq \phi \imp \psi$. An application of $L\imp$ proves that $S$ is derivable in $\LJSL$. 

If the rule is $\leftimprule_{\rm SL}$, then $S$ is of the form $(\Pi, \Box \Gamma, \Box \phi \impl \psi \Impl \Delta)$
and the premises are $(\Pi, \boxdot \Gamma, \Box \phi, \Box \phi \impl \psi \Impl \phi)$ and $(\Pi, \Box \Gamma, \psi \Impl \Delta)$. The premises are derivable in $\LJSL$ by the induction hypothesis. The following derivation shows that $S$ is derivable in $\LJSL$:
\begin{center}
\AXC{$\Pi, \boxdot \Gamma, \Box \phi, \Box \phi \impl \psi \Impl \phi$}
\RL{$\rsch_{\rm SL}$}
\UIC{$\Pi, \Box \Gamma, \Box \phi \impl \psi \Impl \Box \phi$}
\AXC{$\Pi, \Box \Gamma, \psi \Impl \Delta$}
\RL{$L\!\imp$}
\BIC{$(S)  \ \Pi, \Box \Gamma, \Box \phi \impl \psi \Impl \Delta$}
\DisplayProof
\end{center}

The remaining cases are left to the reader.
 
For the other direction we have to show that every sequent $S$ that is provable in $\LJSL$ is provable in $\DYSL$. This is proved by induction on the well-ordering $\ord^{c}$, 
where $c$ is the number of boxed formulas that occur in formulas in $S$ as defined in Section \ref{seclogics}. From Theorem~\ref{thmtermination} we know that the proof search on $S$ in $\DYSL$ terminates modulo extended axioms in this ordering. Let $\ord$ denote $\ord^{c}$ in the rest of the proof and suppose $\af_{\LJSL}S$. 

Sequents lowest in the ordering $\ord$ do not contain connectives and no boxed formula in the succedent. Thus in this case $S$ has to be an instance of an axiom, and since~$\LJSL$ and~$\DYSL$ have the same axioms, $S$ is provable in $\DYSL$. 

We turn to the case that $S$ is not the lowest in the ordering $\ord$. 
If $S^a$ contains a conjunction,  $S = (\Ga,\phi_1\en\phi_2 \seq \De)$, then $S'=(\Ga,\phi_1,\phi_2\seq \De)$ is provable in $\LJSL$ too by Inversion (Lemma~\ref{lemstructural}). As $S'\ord S$, $S'$ is provable in $\DYSL$ by the induction hypothesis. Thus so is $(\Ga,\phi_1\en\phi_2 \seq \De)$. A disjunction in $S^a$ as well as the case that both $p$ and $p\imp \phi$ belong to~$S^a$, can be treated in the same way. 

Thus only the case that $S$ is irreducible remains, and by Lemma~\ref{lemstrict} we may assume its proof to be sensible and strict. Thus its last inference is an application of a rule, $\rsch$, that is either a nonmodal right rule, $\rsch_{\rm SL}$ or $L\!\imp$. In the first case, $\rsch$ belongs to both calculi and the premise of $\rsch$ is lower in the $\sqsubset$ ordering than $S$ and thus the induction hypothesis applies. 

In the case of $\rsch_{\rm SL}$, let $S=(\bx\Sig,\Pi,\bx\Ga \seq \bx\phi)$ and let 
$(\Pi,\dbx\Ga,\bx\phi \seq \phi)$ be the premise of the inference where $\Pi$ does not contain boxed formulas. There are two cases: either $\bx\phi$ does occur in $\bx\Sig$ or $\Box \Gamma$, or it does not. In the first case, $S$ is an extended axiom which is also derivable in $\DYSL$. In the latter case, we consider sequent $S'=(\boxdot\Sig,\Pi,\dbx\Ga,\bx\phi \seq \phi)$ which is derivable in $\LJSL$ by weakening (Lemma~\ref{lemstructural}). We have $S' \sqsubset S$, because $S'$ has more boxed formulas in its antecedent counted as a set than the antecedent of $S$. So $S'$ is by the induction hypothesis derivable in $\DYSL$. An application of $\rsch_{\rm SL}^4$ shows that $S$ is derivable in $\DYSL$.

We turn to rule $L\!\imp$. 
Suppose that the principal formula of the last inference is $(\gam \imp \psi)$ and $S = (\Ga,\gam\imp\psi \seq \De)$. Since the proof is sensible, $\gam$ is not atomic. 
We distinguish according to the main connective of $\gam$.  

If $\gam = \bot$, then $(\Ga\seq \De)$ is derivable in $\LJSL$ (follows easily from admissibility of {\it Cut}), and therefore in $\DYSL$. As $\DYSL$ is closed under weakening, Lemma~\ref{lemstructural}, $S$ is derivable in~$\DYSL$ too. 

If $\gam = \phi_1 \en \phi_2$, then $(\Ga,\phi_1 \imp (\phi_2\imp \psi) \seq \De)$ is derivable in $\LJSL$ (using {\it Cut}). 
Thus it is derivable in $\DYSL$ by the induction hypothesis. Hence so is 
$(\Ga,\phi_1 \en \phi_2 \imp \psi\seq \De)$. The case that $\gam = \phi_1 \of \phi_2$ is analogous.

If $\gam = \phi_1 \imp \phi_2$, then because $\gam\imp\psi$ is the principal formula, 
both sequents $(\Ga, \gam \imp \psi \seq \gam)$ and $(\Ga, \psi \seq \De)$ are derivable in $\LJSL$. Thus so is sequent $(\Ga,\phi_2\imp\psi \seq \phi_1\imp\phi_2)$ (using {\it Cut} and Inversion of the $R\!\impl$ rule (Lemma~\ref{lemstructural})). Since this sequent and 
$(\Ga, \psi \seq \De)$ are lower in the ordering $\sqsubset$ than $S$, they are 
derivable in $\DYSL$ by the induction hypothesis. Hence so is $S$. 

If $\gam = \bx\phi$, the fact that the proof is strict implies that the left premise is the conclusion of an application of $\rsch_{\rm SL}$, so the proof in $\LJSL$ looks as follows, where $\Gamma = \Box \Sigma \cup \Pi \cup \Box \Gamma'$ for some $\Ga',\Sig,\Pi$ such that $\Pi$ does not contain boxed formulas:
\begin{center}
\AXC{$\Pi, \boxdot \Gamma', \Box \phi \impl \psi, \Box \phi \Impl \phi$}
\RL{$\rsch_{\rm SL}$}
\UIC{$\Box \Sigma, \Pi, \Box \Gamma', \Box \phi \impl \psi \Impl \Box \phi$}
\AXC{$\Box \Sigma, \Pi, \Box \Gamma', \psi \Impl \Delta$}
\RL{$L\!\imp$}
\BIC{$(S) \ \Box \Sigma, \Pi, \Box \Gamma', \Box \phi \impl \psi \Impl \Delta$}
\DisplayProof
\end{center}
The right premise is smaller in $\ord$ than $S$, so by induction hypothesis  $(\Box \Sigma, \Pi, \Box \Gamma', \psi \Impl \Delta)$ is also provable in $\DYSL$. There are two cases: either $\Box \phi$ does occur in $\Box \Sigma$ or in $\Box \Gamma$, or not. We show that in both cases the sequent $S' = (\boxdot \Sigma, \Pi, \boxdot \Gamma', \Box \phi \impl \psi, \Box \phi \Impl \phi)$ is provable in $\DYSL$, and, in turn, an application of rule $\imp_{\rm SL}$ shows that $S$ is also provable in $\DYSL$. In the first case, $S'$ is an extended axiom and so it is derivable in $\DYSL$. In the latter case, note that $S' \ord S$ and $S'$ is provable in $\LJSL$ by weakening (Lemma \ref{lemstructural}), so by induction hypothesis we have that $S'$ is provable in $\DYSL$.
\end{proof}

From the previous theorem and Lemma~\ref{lemstructural} the following can be obtained. 

\begin{corollary} 
 \label{corconcutdy} 
The rules {\it Cut} and {\it Contraction} are admissible in $\DYSL$.
\end{corollary}

Since $\DYSL$ admits cut, it is easy to see that it is sound and complete with respect to logic $\iSL$. Therefore, since $\DYSL$ is terminating modulo extended axioms, we obtain the following corollary.

\begin{corollary}
Logic $\iSL$ is decidable.
\end{corollary}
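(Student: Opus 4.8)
The plan is to read off an explicit decision procedure from Theorem~\ref{thmtermination}. Since $\DYSL$ is a sequent calculus for $\iSL$, the definition of the corresponding logic gives $\af_{\iSL}\phi$ if and only if $\af_{\DYSL}(\seq\phi)$, so it suffices to decide, for an arbitrary formula $\phi$, whether $(\seq\phi)$ is derivable in $\DYSL$. First I would build the proof search tree of $(\seq\phi)$. By Theorem~\ref{thmtermination} proof search terminates modulo extended axioms, so this tree is finite, and its construction is effective: at each node one applies the invertible rules until an irreducible sequent is reached, tests whether that sequent is an extended axiom or an instance of $L\bot$, and otherwise branches over the finitely many applicable non-invertible rules. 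The positive/negative marking is then a deterministic bottom-up computation over this finite tree, so the mark of the root is computable.

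Correctness of the procedure amounts to the equivalence that $(\seq\phi)$ is derivable in $\DYSL$ exactly when its root is marked positive. One direction is the second clause of Theorem~\ref{thmtermination}: a positively marked sequent is derivable. For the converse I would show, by induction along the termination ordering $\ord^c$, that every derivable sequent in the tree is marked positive. A derivable non-irreducible node has, by the height-preserving invertibility of Lemma~\ref{lemstructural}, derivable premises; these are positive by induction, so the node is positive. A derivable irreducible node has a derivation ending (read downward) in some non-invertible rule, and since the search exhausts all such rule applications, one branching has all premises derivable, hence all positive, so the node is again positive. Consequently a root marked negative is genuinely underivable, and checking the root's mark decides $\af_{\DYSL}(\seq\phi)$, hence $\af_{\iSL}\phi$.

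The only delicate point --- and the sole real obstacle --- is this completeness of the marking, the step from ``derivable'' to ``positive.'' It rests on two properties of the search already available: that performing the invertible rules first discards no derivations, which is exactly the inversion part of Lemma~\ref{lemstructural}, and that at every irreducible sequent all non-invertible rules are considered, so no potential derivation is overlooked. Everything else is a finite and effective computation, so granting these two facts the corollary follows at once.
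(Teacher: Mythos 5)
Your proposal is correct and takes essentially the same route as the paper, which derives decidability immediately from Theorem~\ref{thmtermination}: build the finite proof search tree modulo extended axioms, compute the positive/negative marking bottom-up, and read off derivability at the root. Your explicit verification of the converse direction (derivable implies positive), via the inversion clauses of Lemma~\ref{lemstructural} and the fact that every non-invertible rule is tried at each irreducible sequent, is left implicit in the paper but is exactly the intended argument and your details are sound.
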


\section{Interpolation}
 \label{secinterpolation}
A logic $\lgc$ has {\em (Craig) interpolation} if whenever $\phi\imp\psi$ is derivable in the logic there is a formula $\varchi$ such that all atoms that occur in $\varchi$ occur both in $\phi$ and in $\psi$, and 
\[
 \af_\lgc \phi\imp\varchi \ \ \ \ \af_\lgc \varchi\imp\psi.
\]

\begin{theorem}
For every sequent $(\Ga_1,\Ga_2\seq \De)$ derivable in $\LJSL$ there exists a formula $I$ such that all atoms that occur in $I$ occur both in $\Ga_1$ and in $\Ga_2\cup\De$ and such that 
\[
 \af_{\LJSL}\Ga_1\seq I \ \ \ \ \af_{\LJSL}\Ga_2,I\seq \De.
\]
\end{theorem}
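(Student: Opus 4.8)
The plan is to prove a stronger, partition-relative statement by induction on cut-free derivations, which we may assume to exist by the Cut-elimination Theorem (Theorem~\ref{thmcutadm}). For every cut-free provable sequent and every splitting of its antecedent into a left zone $\Ga_1$ and a right zone $\Ga_2$, I would produce a formula $I$ with $\af_{\LJSL}\Ga_1\seq I$ and $\af_{\LJSL}\Ga_2,I\seq\De$ whose atoms occur both in $\Ga_1$ and in $\Ga_2\cup\De$; the theorem is then the special case for the given split. Writing such a split sequent as $(\Ga_1\mid\Ga_2\seq\De)$, I induct on the height of the cut-free proof, at each rule distributing the principal and auxiliary formulas over the two zones according to where the principal formula of the conclusion sits.

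For the axioms and the propositional rules this is exactly Maehara's method for the single-succedent calculus $\LJ$, and I would invoke the standard constructions (see pages~66--67 of \citep{troelstra&schwichtenberg96}). In particular, an axiom $\Ga,p\seq p$ receives interpolant $p$ or $\top:=(\bot\imp\bot)$ according to whether $p$ lands in $\Ga_1$ or $\Ga_2$, dually for $L\bot$, and the binary rules combine the interpolants of the premises using $\en,\of,\imp$ depending on the zone of the principal formula, precisely as in the intuitionistic Maehara lemma. None of these cases touches the modality, so no new idea is needed there.

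The only genuinely new case, and the one I expect to be the crux, is the modal rule $\rsch_{\rm SL}$, with conclusion $\bx\Sig,\Pi,\bx\Ga\seq\bx\phi$ and premise $\Pi,\dbx\Ga,\bx\phi\seq\phi$. Given a split of the conclusion, I write $\Sig=\Sig'\cup\Sig''$, $\Pi=\Pi'\cup\Pi''$, $\Ga=\Ga'\cup\Ga''$ so that $\Ga_1=\bx\Sig',\Pi',\bx\Ga'$ and $\Ga_2=\bx\Sig'',\Pi'',\bx\Ga''$. I would split the premise by keeping $\Pi',\dbx\Ga'$ on the left and $\Pi'',\dbx\Ga''$ together with the diagonal $\bx\phi$ on the right (the diagonal must travel with the succedent). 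The induction hypothesis then yields an interpolant $J$ with
\[
 \af_{\LJSL}\Pi',\dbx\Ga'\seq J \qquad \af_{\LJSL}\Pi'',\dbx\Ga'',\bx\phi,J\seq\phi,
\]
and I would take $I:=\bx J$. For the left half, weakening $\bx J$ into the first sequent (Lemma~\ref{lemstructural}) and applying $\rsch_{\rm SL}$ with diagonal $\bx J$ gives $\bx\Sig',\Pi',\bx\Ga'\seq\bx J$, the $\bx\Sig'$ being introduced by the rule. For the right half, I would read $\bx J$ as a member of the boxed context by putting $\Ga''\cup\{J\}$ in the role of $\Ga$: weakening $\bx J$ into the second sequent yields $\Pi'',\dbx\Ga'',J,\bx J,\bx\phi\seq\phi$, whence $\rsch_{\rm SL}$ delivers $\bx\Sig'',\Pi'',\bx\Ga'',\bx J\seq\bx\phi$. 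Both steps use only closure under weakening and the fact that $\Pi',\Pi''$ inherit the no-box restriction on $\Pi$.

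Finally I would verify the atom condition: since $\mathrm{atoms}(\bx J)=\mathrm{atoms}(J)$, and $J$'s atoms occur both in $\Pi'\cup\dbx\Ga'$ and in $\Pi''\cup\dbx\Ga''\cup\{\bx\phi\}$, they occur in $\Ga_1$ (as $\Pi'\subseteq\Ga_1$ and $\dbx\Ga'$ has the same atoms as $\bx\Ga'\subseteq\Ga_1$) and in $\Ga_2\cup\De$ (as $\Pi''\subseteq\Ga_2$, $\dbx\Ga''$ shares atoms with $\bx\Ga''\subseteq\Ga_2$, and $\bx\phi=\De$). The main difficulty is thus not the bookkeeping of atoms but recognising that the single candidate $\bx J$ is at once provable on the left and usable on the right; this works precisely because $\rsch_{\rm SL}$ internalises the Completeness Principle, which lets $\bx J$ be introduced both as a weakening box on the left and as a diagonal-style box on the right.
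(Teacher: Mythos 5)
Your proposal is correct and takes essentially the same approach as the paper: the paper's proof is exactly the Maehara-style split-sequent induction on cut-free derivations (it merely defers to the analogous argument for ${\sf G3iGL}_\Box$ in van der Giessen--Iemhoff 2019), and your handling of $\rsch_{\rm SL}$ with interpolant $\bx J$ --- weakened in as the diagonal on the left-zone instance, absorbed into the boxed context as $J,\bx J$ on the right-zone instance --- is precisely the adaptation to the ${\rm SL}$ rule that the word ``analogous'' there entails. The checks you make (that $\Pi',\Pi''$ inherit the no-box restriction, and that $\bx\Sig'$, respectively $\bx J$, can be introduced by the rule) are exactly the points where the $\LJSL$ case differs from the $\LJGL$ case.
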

\begin{proof}
Analogous to the proof of interpolation for $\LJGL$ in \citep{giessen&iemhoff2019}. 
\end{proof}

The same result holds for $\DYSL$. The following has been proven, but not yet published, independently by Litak and Visser, building on their work on the Lewis arrow in the setting of intuitionistic logic \citep{litak&visser18}: 

\begin{corollary}
$\iSL$ has Craig interpolation. 
\end{corollary}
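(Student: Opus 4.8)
The plan is to prove, by Maehara's method, a partition-relative strengthening of the statement and then read off the theorem as a special case. Since $\LJSL$ contains no cut rule, every derivation is already cut-free, so I would argue directly by induction on the height of the $\LJSL$-derivation of $(\Ga_1,\Ga_2\seq\De)$, establishing for \emph{every} way of splitting the antecedent that there is an interpolant $I$ with $\af_{\LJSL}\Ga_1\seq I$ and $\af_{\LJSL}\Ga_2,I\seq\De$ whose atoms are common to $\Ga_1$ and $\Ga_2\cup\De$. The theorem is then the instance given by the stated split. The overall structure mirrors the interpolation proof for $\LJGL$ in \citep{giessen&iemhoff2019}.

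For the base cases the interpolant is chosen by hand: for an instance $\Ga,p\seq p$ of $At$, take $I=p$ when the antecedent occurrence of $p$ lies in $\Ga_1$ and $I=(\bot\imp\bot)$ when it lies in $\Ga_2$; for an instance of $L\bot$, take $I=\bot$ when $\bot\in\Ga_1$ and $I=(\bot\imp\bot)$ when $\bot\in\Ga_2$. For the propositional rules I would follow the standard intuitionistic Maehara recipe, forming the interpolant of the conclusion from those of the premises. The only subtlety is that, the calculus being single-succedent, any auxiliary formula that migrates into the succedent of the conclusion (the antecedent $\phi$ of $R\!\imp$) must be assigned to the right part of the premise split, so that the premise interpolant can be reused unchanged; the two-premise rules $R\en$, $L\of$ and $L\!\imp$ combine the premise interpolants using $\en$ and $\of$ in the usual manner. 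None of these cases involves the modality and they go through exactly as for $\IPC$.

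The main obstacle, and the only genuinely new case, is the modal rule $\rsch_{\rm SL}$, with conclusion $\bx\Sig,\Pi,\bx\Ga\seq\bx\phi$ and premise $\Pi,\dbx\Ga,\bx\phi\seq\phi$. Fixing a split of the conclusion as $\bx\Sig_1,\Pi_1,\bx\Ga_1'$ on the left and $\bx\Sig_2,\Pi_2,\bx\Ga_2'$ on the right, with the succedent $\bx\phi$ on the right, I would induce the matching split of the premise into $\Pi_1,\dbx\Ga_1'$ and $\Pi_2,\dbx\Ga_2',\bx\phi$. Writing $J$ for the premise interpolant given by the induction hypothesis, so that $\af_{\LJSL}\Pi_1,\dbx\Ga_1'\seq J$ and $\af_{\LJSL}\Pi_2,\dbx\Ga_2',\bx\phi,J\seq\phi$, I claim $I=\bx J$ works for the conclusion. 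For the left obligation, weakening (Lemma~\ref{lemstructural}) turns $\Pi_1,\dbx\Ga_1'\seq J$ into $\Pi_1,\dbx\Ga_1',\bx J\seq J$, and one application of $\rsch_{\rm SL}$, with diagonal formula $\bx J$ and introduced context $\bx\Sig_1$, yields $\bx\Sig_1,\Pi_1,\bx\Ga_1'\seq\bx J$; note the non-boxed context $\Pi_1$ respects the side-condition on $\rsch_{\rm SL}$. For the right obligation, weakening $\bx J$ into the right premise sequent gives $\Pi_2,\dbx\Ga_2',\bx\phi,J,\bx J\seq\phi$, and treating $J$ as an extra boxed hypothesis (so that $\dbx J=J,\bx J$ appears) an application of $\rsch_{\rm SL}$ with diagonal formula $\bx\phi$ produces $\bx\Sig_2,\Pi_2,\bx\Ga_2',\bx J\seq\bx\phi$, as required. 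Since the atoms of $\bx J$ are those of $J$, the unboxed formulas $\Pi_i$ survive into the premise, and $\phi$ and $\bx\phi$ have the same atoms, the common-atom condition for $\bx J$ on the conclusion is inherited from that for $J$ on the premise. Verifying these two $\rsch_{\rm SL}$ applications together with the atom bookkeeping is where essentially all the work of the modal case resides; every other case is routine.
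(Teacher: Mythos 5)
Your proposal is correct and is essentially the paper's own proof: the paper establishes the sequent-form theorem (an interpolant for every split $\Ga_1,\Ga_2\seq\De$ of an $\LJSL$-derivable sequent) by exactly this Maehara-style induction on cut-free $\LJSL$-derivations, citing the analogous argument for $\LJGL$ in \citep{giessen&iemhoff2019}, and your treatment of the $\rsch_{\rm SL}$ case with interpolant $\bx J$ --- one application of $\rsch_{\rm SL}$ with diagonal formula $\bx J$ for the left obligation, one with diagonal formula $\bx\phi$ and $J$ absorbed into $\dbx(\Ga_2',J)$ for the right, both legitimate since $\Pi_1,\Pi_2$ are box-free and the $\bx\Sig_i$ may be introduced --- is precisely the new modal case that proof requires. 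The only slip is in your summary of the ``routine'' cases: when the principal formula of $L\!\imp$ lies in $\Ga_1$, the standard intuitionistic Maehara recipe you invoke needs the combinator $J_1\imp J_2$, obtained by splitting the left premise with $\Ga_2$ as the left block, so combining premise interpolants with $\en$ and $\of$ alone would fail there.
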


\section{Kripke semantics for $\iSL$}\label{Kripke iSL}

In this section we examine the Kripke semantics for $\iSL$. Kripke semantics for $\iSL$ has been introduced in \citep{litak14} and \citep{ardeshir&mojtahedi18}. Here we focus on a countermodel construction in the proof of completeness for a variant of $\DYSL$, denoted~$\DYSLprime$, which is also terminating. Closely related are the works of \cite{svejdar06} and \cite{avron84}. \v{S}vejdar gives a countermodel construction for $\DY$ for $\IPC$. Avron provides a semantic proof of the cut-elimination theorem for the standard sequent calculus for classical~${\sf GL}$. Completeness with respect to the Kripke semantics provides a semantic proof of the admissibility of the cut rule.

The only difference between $\DYSL$ and $\DYSLprime$ is another representation of rule $L\!\imp\!\imp$ that enforces an immediate backward application of $R\!\imp$ to the left premise. So $\DYSLprime$ is defined as $\DYSL$ where $L\!\imp\!\imp$ is replaced by the following rule:
\begin{center}
\AXC{$\Gamma, \psi \impl \gamma, \phi \Impl \psi$}
\AXC{$\Gamma, \gamma \Impl \Delta$}
\RL{$L\!\imp\!\imp'$}
\BIC{$\Gamma, (\phi \impl \psi) \impl \gamma \Impl \Delta$}
\DisplayProof
\end{center}
This change is necessary in the proof of completeness. It is easy to see that the properties from Lemma~\ref{lemstructural} that hold for $\DYSL$ also hold for $\DYSLprime$. In addition, it is terminating modulo extended axioms similarly proved as Theorem~\ref{thmtermination}.

Kripke models for intuitionistic propositional logic are structures of the form $(W,\leq,V)$ where $W$ is the set of possible worlds structured by partial order $\leq$ and $V$ is a valuation determining which atoms are valid in which worlds. Kripke's possible worlds semantics are also used in modal logic. In the classical setting they have the form $(W,R,V)$ where~$R$ represents the modal relation. As \cite{simpson94} phrases it in his PhD thesis, it is `(mathematically) natural to build a semantics of intuitionistic modal logic by combining the two Kripkean accounts.' 

An \emph{intuitionistic modal Kripke model} is a structure $(W,\leq, R, V)$, where 
\begin{enumerate}[noitemsep]
\item $W$ is a non-empty set,
\item $\leq$ is a partial order on $W$ (\emph{intuitionistic relation}),
\item $R$ a binary relation (\emph{modal relation}),
\item $w \leq v R x$ implies $wRx$,
\item $V$ is a function $V: W \times Atoms \impl \{0,1\}$ satisfying \emph{monotonicity}, which means that $w \leq v$ and $V(w,p)=1$ implies $V(v,p)=1$. We call $V$ the \emph{valuation}.
\end{enumerate}
An $\iSL$\emph{-model} is an intuitionistic modal Kripke model satisfying the following criteria:
\begin{enumerate}[noitemsep]
\item[6.] relation $R$ is transitive, i.e., if $wRx$ and $xRy$, then $wRy$,
\item[7.] $R$ is conversely well-founded, i.e., there is no infinite $R$ path,
\item[8.] $wRx$ implies $w \leq x$.
\end{enumerate}

Points 1.-7. characterize $\iGL$-models. So point 8. is characteristic for $\iSL$-models. Note that for $\iSL$, point 6. follows from 4. and 8. We define a \emph{forcing relation} $\Vdash$ for a model $\calM = (W, \leq, R, V)$ as usual as follows:

\begin{listliketab} 
    \storestyleof{enumerate} 
        \begin{tabular}{Lll}
             & $\calM,w \Vdash p$ & iff $V(w,p)=1$,\\
             & $\calM,w \Vdash \bot$ & never,\\
             & $\calM,w \Vdash \phi \wedge \psi$ & iff $\calM,w \Vdash \phi$ and $\calM,w \Vdash \psi$,\\
             & $\calM,w \Vdash \phi \vee \psi$ & iff $\calM,w \Vdash \phi$ or $\calM,w \Vdash \psi$,\\
             & $\calM,w \Vdash \phi \impl \psi$ & iff for all $v \geq w$, $\calM,v \Vdash \phi$ implies $\calM, v \Vdash\psi$,\\
            & $\calM,w \Vdash \Box \phi$ & iff for all $x$ such that $wRx$ we have $\calM,x \Vdash \phi$.
        \end{tabular} 
\end{listliketab}

As usual, from this definition and point 5. it follows by induction on formula $A$ that $w \leq v$ and $\calM, w \Vdash A$ imply $\calM, v \Vdash A$.

Let $\calM = (W,\leq,R,V)$ be an intuitionistic modal Kripke model and $w \in W$. The \textit{submodel generated by} $w$ is the model $\calM' = (W', \leq', R', V')$ such that $w \in W'$ and if $x \in W'$ and $xRy$ or $x \leq y$, then $y \in W'$. Relations $\leq'$ and $R'$ are the restriction of $\leq$ and $R$ to $W'$ respectively, and valuation $V'$ is the restriction of $V$ to $\calM'$. We call $w$ the \textit{root} of the model. If $\calM$ is an $\iSL$-model, then so is $\calM'$. One can easily verify that if $x \in \calM'$, then $\calM, x \Vdash \phi$ if and only if $\calM',x \Vdash \phi$ for any formula $\phi$.

For the countermodel construction we define the
termination procedure modulo extended axioms in a more rigorous way imposing
an order on rule applications following \citep{bilkova06} where we first apply invertible rules and after that the non-invertible rules. Note that all rules in $\DYSLprime$ are invertible except for $L\!\imp\!\imp'$, $R\of$, $\impl_{{\rm SL}}$, and~$\rsch_{\rm SL}^{4}$. To make this explicit, we introduce the following concept.

We call a sequent \textit{saturated} in $\DYSLprime$ if it is not an extended axiom and it cannot be the conclusion of an invertible rule from $\DYSLprime$.

\begin{lemma}\label{lemirreducible}
A sequent of the form $\Pi, \Box \Gamma \Impl  \Delta$ is saturated if it satisfies the following conditions:
\begin{enumerate}[label=(\roman*)]
\item $\Delta$ is empty or its formula is of the form $p$, $\bot$, $ \phi_1 \vee  \phi_2$ or $\Box  \phi$,
\item all formulas in $\Pi$ have the form $p, p \impl  \phi , ( \phi  \impl  \psi ) \impl  \gamma $ or $\Box  \phi  \impl  \psi $,
\item not both $p \in \Pi$ and $p \impl  \phi  \in \Pi$,
\item not both $ \phi  \in \Pi \cup \Box \Gamma$ and $ \Delta = \{\phi \}  $.
\end{enumerate}
\end{lemma}
\begin{proof}
This is an easy observation by the form of the rules.
\end{proof}

Given a sequent $S =(\Gamma \Impl \Delta)$, the {\em proof search tree} of $S$ is defined as follows. We create a tree whose nodes are labeled by sequents. The root is labeled by $(\Gamma \Impl \Delta)$ and we apply the rules backwards. By backwards applying a rule to a sequent that is a label of a node in the tree, we create predecessor node(s) and label them by the premise(s) of the rule. We first apply all invertible rules, in arbitrary order. We continue, until no invertible rule can be applied (which is guaranteed to terminate by decreasing weight). If we end up with an extended axiom, then stop the search for that node. Otherwise, it is a saturated sequent and apply each possible non-invertible rule and create for each rule predecessor nodes of the node marked by the saturated sequent, and again label them by the premise(s) of the corresponding rule. Repeat the procedure for those nodes.

In addition we mark the sequents in the proof search tree as \emph{positive} or \emph{negative}. If a leave is labeled by an extended axiom or an instance of $L\bot$, mark it as \emph{positive}. If not, mark it as \emph{negative}. For other nodes, we move down the proof search tree marking nodes in the following way. A saturated sequent is marked as \emph{positive} if for at least one backwards applied rule all its corresponding predecessors have been marked as positive. Non-saturated sequents are marked as \emph{positive} if all its predecessors have been marked as positive.

Similarly to Theorem~\ref{thmtermination} we can prove the following.

\begin{theorem}
Sequent calculus $\DYSLprime$ terminates modulo extended axioms. In addition, according to the proof search described above, a sequent is provable in $\DYSLprime$ if it is marked as positive in the proof search tree.
\end{theorem}

\subsection{Soundness}
We write $\calM,w \Vdash \Gamma$ if $\calM,w \Vdash \phi$ for every $\phi \in \Gamma$. We say that sequent $\Gamma \Impl \Delta$ is \emph{valid in model $\calM$}, denoted $\calM \models \Gamma \Impl \Delta$, if for all $w$, $\calM, w \Vdash \Gamma$ implies $\calM, w \Vdash \Delta$. We say that $\Gamma \Impl \Delta$ is \emph{refuted in $\calM$} if it is not valid in $\calM$. And we say that sequent $\Gamma \Impl \Delta$ is \emph{valid}, denoted $\Gamma \models_{\iSL} \Delta$, if it is valid in all $\iSL$-models. And we say that sequent $\Gamma \Impl \Delta$ is \emph{refuted} if it is not valid. From now we write $\models$ to mean $\models_{\iSL}$.

\begin{theorem}[Soundness]
If $ \proofs_{\DYSLprime} \Gamma \Impl \Delta$, then $\Gamma \models \Delta$.
\end{theorem}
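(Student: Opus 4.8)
The plan is to argue by induction on the height of the derivation in $\DYSLb$, showing that every rule preserves validity in the following sense: if all premises are valid in a given $\iSL$-model, then so is the conclusion. Since $\Gamma\models\Delta$ quantifies over all $\iSL$-models, I would fix an arbitrary $\calM=(W,\leq,R,V)$ and argue there. For the base cases, an instance $\Ga,p\Impl p$ of \textit{At} is valid because $\calM,w\Vdash\Ga,p$ forces $p$ at $w$, and an instance $\Ga,\bot\Impl\De$ of $L\bot$ is vacuously valid since $\bot$ is never forced. For the inductive step the purely propositional rules ($R\en,L\en,R\of,L\of,R\!\imp$) and the Dyckhoff left rules ($Lp\!\imp,L\en\!\imp,L\of\!\imp,L\!\imp\!\imp^a$) are routine and use only the persistence property stated just before the theorem, namely that $w\leq v$ and $\calM,w\Vdash A$ imply $\calM,v\Vdash A$. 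The one deserving some care is $L\!\imp\!\imp^a$: to obtain $\calM,w\Vdash\gam$ it suffices to show $\calM,w\Vdash\phi\imp\psi$, and for the latter, given $v\geq w$ with $\calM,v\Vdash\phi$, one first checks $\calM,v\Vdash\psi\imp\gam$ by noting that any $u\geq v$ with $\calM,u\Vdash\psi$ already forces $\phi\imp\psi$ (by persistence), so that $(\phi\imp\psi)\imp\gam$ yields $\gam$ at $u$; then the left premise applies at $v$.

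The crux is the modal right rule $\rsch_{\rm SL}^a$, with conclusion $\Pi,\bx\Ga\Impl\bx\phi$ and premise $\Pi,\dbx\Ga,\bx\phi\Impl\phi$. Assume the premise valid and fix $w$ with $\calM,w\Vdash\Pi,\bx\Ga$; I must show $\calM,w\Vdash\bx\phi$, i.e.\ $\calM,x\Vdash\phi$ for every $x$ with $wRx$. This is a L\"ob-style argument: since $R$ is conversely well-founded (condition 7) and transitive (condition 6, so that the set $\{x\mid wRx\}$ is closed under $R$-successors), I would prove $\calM,x\Vdash\phi$ by conversely well-founded induction on $R$ over that set. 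Fix such an $x$ and assume as induction hypothesis that $\calM,y\Vdash\phi$ for all $y$ with $xRy$; then $\calM,x\Vdash\bx\phi$ by the forcing clause for $\bx$. Because $wRx$ implies $w\leq x$ (condition 8), persistence gives $\calM,x\Vdash\Pi,\bx\Ga$, and from $\calM,w\Vdash\bx\Ga$ together with $wRx$ we also obtain $\calM,x\Vdash\Ga$. Hence $\calM,x\Vdash\Pi,\dbx\Ga,\bx\phi$, and validity of the premise yields $\calM,x\Vdash\phi$, closing the induction. Thus the three characteristic $\iSL$-conditions (transitivity, converse well-foundedness, and $wRx\Rightarrow w\leq x$) are exactly what makes this step go through.

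For the remaining two rules I would reuse this pattern. The rule $\leftimprule_{{\rm SL},2}$ is immediate: when $\bx\phi\in\bx\Ga$, forcing $\bx\Ga$ forces $\bx\phi$, so $\bx\phi\imp\psi$ gives $\psi$ at $w$ and the premise $\Pi,\bx\Ga,\psi\Impl\De$ finishes. For $\leftimprule_{{\rm SL},1}$ I would first establish $\calM,w\Vdash\bx\phi$ by precisely the conversely well-founded induction above, now applied to the left premise $\Pi,\dbx\Ga,\bx\phi,\bx\phi\imp\psi\Impl\phi$, where the extra antecedent $\bx\phi\imp\psi$ is transported to each $R$-successor $x$ by persistence using $w\leq x$; once $\calM,w\Vdash\bx\phi$, the antecedent $\bx\phi\imp\psi$ gives $\calM,w\Vdash\psi$ and the right premise yields $\calM,w\Vdash\De$. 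I expect the main obstacle to be exactly the $\rsch_{\rm SL}^a$ case and the analogous part of $\leftimprule_{{\rm SL},1}$: setting up the conversely well-founded induction correctly and verifying that the full antecedent of the premise is genuinely forced at each $R$-successor $x$, which is where conditions 6--8 and persistence must be combined carefully; everything else is bookkeeping.
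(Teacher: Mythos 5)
Your proposal is correct and takes essentially the same approach as the paper: induction on the height of the derivation, with the propositional and Dyckhoff rules verified routinely via persistence, and the modal cases $\rsch_{\rm SL}^{a}$ and $\leftimprule_{{\rm SL},1}$ handled by combining transitivity, converse well-foundedness, and $wRx \Rightarrow w \leq x$ exactly as the paper does. The only difference is presentational: where you run a direct conversely well-founded (Noetherian) induction on $R$ to establish $\calM,x \Vdash \phi$ at every $R$-successor, the paper uses the contrapositive form of condition 7, assuming $\calM,w \not\Vdash \Box\phi$ and extracting an infinite chain $wRx_1Rx_2R\cdots$ to reach a contradiction.
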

\begin{proof}
As usual, we use induction on the height $d$ of the derivation of $\Gamma \Impl \Delta$. We consider a few steps.

First consider $d=1$ and $\Gamma \Impl \Delta$ has the form $p,\Gamma' \Impl p$ for some $p$. Suppose $\calM, w \Vdash \phi$ for each $\phi \in \Gamma$. Then clearly $\calM,w\Vdash p$.

Consider $d>1$ and $\Gamma \Impl \Delta$ has the form $\bot, \Gamma' \Impl \Delta$. For each model $\calM$ we have $\calM,w \not \Vdash \bot$, hence $\bot, \Gamma' \Vdash \Delta$.

Now consider $d>1$. We treat four cases. First suppose the last rule applied is $R\wedge$, where $\Gamma \Impl \Delta$ has the form $\Gamma \Impl \phi \wedge \psi$ and 
\begin{center}
\AXC{$\Gamma \Impl \phi$}
\AXC{$\Gamma \Impl \psi$}
\RL{$R\wedge$}
\BIC{$\Gamma \Impl \phi \wedge \psi$}
\DisplayProof
\end{center}
Let $\calM$ be an $\iSL$-model and $w \in \calM$ such that $\calM,w \Vdash \Gamma$. By the induction hypothesis we know $\calM,w \Vdash \phi$ and $\calM,w \Vdash \psi$, hence $\calM, w \Vdash \phi \wedge \psi$.

Now assume the last inference is an instance of $L\!\imp\!\imp'$. This means that $\Gamma \Impl \Delta$ is of the form $\Gamma', (\phi \impl \psi) \impl \gamma \Impl \Delta$ and
\begin{center}
\AXC{$\Gamma', \psi \impl \gamma, \phi \Impl \psi$}
\AXC{$\Gamma', \gamma \Impl \Delta$}
\RL{$L\!\imp\!\imp'$}
\BIC{$\Gamma', (\phi \impl \psi) \impl \gamma \Impl \Delta$}
\DisplayProof
\end{center}
Let $\calM$ be a model and $w \in \calM$ such that $\calM, w \Vdash \Gamma', (\phi \impl \psi) \impl \gamma$. From $\calM, w \Vdash (\phi \impl \psi) \impl \gamma$ we can deduce that $\calM, w \Vdash \psi \impl \gamma$. The induction hypothesis of the first premise gives us $\Gamma', \psi \impl \gamma, \phi \models \psi$ and so $\calM,w \Vdash \phi \impl \psi$ by monotonicity. Since $\calM,w \Vdash (\phi \impl \psi) \impl \gamma$ we have $\calM,w \Vdash \gamma$. Hence, by the induction hypothesis applied to the second premise we have $\calM,w \Vdash \Delta$.

For $\leftimprule_{{\rm SL}}$, write $\Gamma \Impl \Delta$ as $\Pi, \Box \Gamma', \Box \phi \impl \psi \Impl \Delta$, where $\Pi$ does not contain boxed formulas. We have
\begin{center}
\AXC{$\Pi, \boxdot \Gamma',  \Box \phi, \Box \phi \impl \psi \Impl \phi$}
\AXC{$\Pi, \Box \Gamma', \psi \Impl \Delta$}
\RL{$\leftimprule_{{\rm SL}}$}
\BIC{$\Pi, \Box \Gamma', \Box \phi \impl \psi \Impl \Delta$}
\DisplayProof
\end{center} 
Suppose $\calM, w \Vdash \Pi, \Box \Gamma', \Box \phi \impl \psi$. We show that this implies $\calM,w\Vdash \psi$ and therefore $\calM,w \Vdash \Delta$ by the induction hypothesis of the second premise. Suppose for a contradiction that $\calM,w \not \Vdash \psi$. By $\calM,w \Vdash \Box \phi \impl \psi$ we have $\calM,w \not \Vdash \Box \phi$. This implies the following reasoning:
\begin{itemize}
\item there exists $x_1$ such that $w R x_1$ and $\calM,x_1 \not \Vdash \phi$. Because $\calM, w \Vdash \Box \Gamma'$, we have $\calM, x_1 \Vdash \Gamma'$. Since we work in an $\iSL$-model, we also have $w \leq x_1$ and therefore by monotonicity $\calM, x_1 \Vdash \Pi, \Box \Gamma', \Box \phi \impl \psi$. So together we have $\calM, x_1 \Vdash \Pi, \boxdot \Gamma', \Box \phi \impl \psi$ and $\calM,x_1 \not \Vdash \phi$. By induction hypothesis of the first premise, $\calM, x_1 \not \Vdash \Box \phi$.
\item So, there exists $x_2$ such that $x_1 R x_2$ and $\calM,x_2 \not \Vdash \phi$. By the above reasoning: $\calM, x_2 \Vdash \Pi, \boxdot \Gamma', \Box \phi \impl \psi$ and $\calM,x_2 \not \Vdash \phi$. So by induction hypothesis $\calM, x_2 \not \Vdash \Box \phi$.
\item So, there exists $x_3$ $\dots$
\end{itemize}
Now we can construct an infinite sequence $w R x_1 R x_2 R  x_3 R \dots$. This is in contradiction with the conversely well-foundedness of relation $R$. Hence we proved $\calM, w \Vdash \psi$, and therefore $\calM,w \Vdash \Delta$.

The last rule we treat is $\rsch_{\rm SL}^{4}$. So we have
\begin{center}
\AXC{$\Pi, \boxdot \Gamma', \Box \phi \Impl \phi$}
\RL{$\rsch_{\rm SL}^{4}$}
\UIC{$\Pi, \Box \Gamma' \Impl \Box \phi$}
\DisplayProof
\end{center}
Suppose $\calM, w \Vdash \Pi, \Box \Gamma'$. We want to prove $\calM, w \Vdash \Box \phi$. Suppose for a contradiction that $\calM, w \not \Vdash \Box \phi$. Then by a similar reasoning as for rule $\leftimprule_{{\rm SL}}$ we obtain an infinite sequence $wR x_1 R x_2 R x_3 \dots$. This is a contradiction, so $\calM, w \Vdash \Box \phi$.

\end{proof}

\subsection{Completeness} 

This section provides the countermodel construction. We use the following lemma about invertible rules. Recall that $L\!\imp\!\imp'$, $R\vee$, $\imp_{{\rm SL}}$ and $\rsch_{\rm SL}^{4}$ are the non-invertible rules for $\DYSLprime$. All other rules are invertible.

\begin{lemma}\label{lemma semantic invertible}
Let $\calM$ be a model with world $w$. For any invertible rule in $\DYSLprime$ we have that whenever one of its premises is refuted by $w$ in $\calM$, the conclusion of the rule is also refuted by $w$ in $\calM$.
\end{lemma}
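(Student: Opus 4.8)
The plan is to proceed by a case analysis over the invertible rules of $\DYSLb$: the left rules $L\en$, $L\of$, $Lp\!\imp$, $L\en\!\imp$, $L\of\!\imp$, $\leftimprule_{{\rm SL},2}$ and the two right rules $R\en$, $R\!\imp$. For each rule I fix a world $w$ in a model $\calM$ and assume that one of the premises is refuted by $w$ --- that is, $w$ forces the whole antecedent of that premise but not its succedent --- and I show that $w$ forces the antecedent of the conclusion while still not forcing its succedent. For the left rules this amounts to checking that forcing all auxiliary formulas of the premise at $w$ forces the principal formula of the conclusion at $w$, the succedent being carried over verbatim; for the two right rules it amounts to checking that the (possibly new) succedent of the conclusion fails at $w$, given that a premise succedent fails at $w$.

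First I would dispose of the cases that follow directly from the forcing clauses. For $L\en$ and $L\of$ the principal formula of the conclusion is a conjunction or a disjunction whose forcing at $w$ is immediate from the forcing of its auxiliary formulas in the refuted premise; for the two-premise rules $L\of$ and $R\en$ one observes that the refutation may originate in either premise and treats the two possibilities symmetrically (for $R\en$, a refuted premise $\Gamma \Impl \phi$ gives $w \not\Vdash \phi$, hence $w \not\Vdash \phi \en \psi$). The rule $R\!\imp$ is the one place where a fresh implication appears in the succedent: if the premise $\Gamma, \phi \Impl \psi$ is refuted by $w$, then $w \Vdash \Gamma, \phi$ and $w \not\Vdash \psi$, so instantiating the forcing clause for implication at $v = w$ witnesses $w \not\Vdash \phi \imp \psi$, and the conclusion $\Gamma \Impl \phi \imp \psi$ is refuted by $w$.

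The remaining cases are the left-implication rules, where the only ingredient beyond the bare forcing clauses is monotonicity of $\Vdash$ together with two standard intuitionistic equivalences. For $Lp\!\imp$ and $\leftimprule_{{\rm SL},2}$ the conclusion acquires a principal implication, $p \imp \phi$ respectively $\Box \phi \imp \psi$, whose consequent ($\phi$, respectively $\psi$) is already forced at $w$ by the refuted premise; by monotonicity this consequent is then forced at every $v \geq w$, so the implication holds vacuously at $w$ while the remaining antecedent formulas carry over unchanged. For $L\en\!\imp$ and $L\of\!\imp$ I would invoke the validities $\bigl(\phi \imp (\psi \imp \gamma)\bigr) \ifff \bigl((\phi \en \psi) \imp \gamma\bigr)$ and $\bigl((\phi\imp\gamma) \en (\psi\imp\gamma)\bigr) \ifff \bigl((\phi \of \psi) \imp \gamma\bigr)$: forcing the auxiliary formulas of the premise at $w$ thereby forces the principal formula of the conclusion at $w$, with the succedent again untouched.

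I do not anticipate a genuine obstacle. The two points that deserve a moment's care are the appeal to monotonicity in $Lp\!\imp$ and $\leftimprule_{{\rm SL},2}$, where it is essential that the consequent be forced \emph{hereditarily}, at all $v \geq w$, rather than merely at $w$, and the remark that the side condition $\Box \phi \in \Box \Gamma$ on $\leftimprule_{{\rm SL},2}$ plays no role in this direction and may simply be ignored. Overall the argument is the semantic counterpart of the height-preserving invertibility established in Lemma~\ref{lemstructural} and runs uniformly across the cases.
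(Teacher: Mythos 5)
Your proposal is correct and takes essentially the same approach as the paper, whose proof consists of exactly the remark that this is ``an easy calculation for each invertible rule''---your case analysis is the routine verification left implicit there. One word-level quibble: in the $Lp\!\imp$ and $\leftimprule_{{\rm SL},2}$ cases the principal implication holds at $w$ not ``vacuously'' but because its consequent is forced hereditarily at all $v \geq w$, which is precisely the monotonicity point you yourself flag as the step deserving care.
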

\begin{proof}
This is an easy calculation for each invertible rule.
\end{proof}

\begin{theorem}[Completeness]\label{thm countermodels}
If $\Gamma \models \Delta$, then $ \proofs_\DYSLprime \Gamma \Impl \Delta$.
\end{theorem}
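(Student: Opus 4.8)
The plan is to prove the contrapositive: assuming $\not\proofs_{\DYSLb} \Gamma \Impl \Delta$, I will build an $\iSL$-model $\calM$ together with a world refuting $\Gamma \Impl \Delta$, so that $\Gamma \not\models \Delta$. By Theorem~\ref{thmtermination} the proof search tree of $\Gamma \Impl \Delta$ is finite, and since $\Gamma \Impl \Delta$ is not provable its root must be marked \emph{negative} (a positive marking would yield provability). Every ingredient of the countermodel is then read off from the negatively marked nodes of this fixed, finite tree, in the style of the Sch\"utte--Takeuti--Tait construction used for $\iGL$ in \citep{giessen&iemhoff2019} and of \v{S}vejdar's construction for $\DY$, but now respecting the extra frame condition $R \subseteq {\leq}$.

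Next I would fix the frame. The worlds of $\calM$ are the negatively marked irreducible sequents occurring in the tree. The intuitionistic relation $\leq$ is generated by the non-modal non-invertible rules $L\!\imp\!\imp^a$ and $R\vee$ (followed by the invertible reduction back to irreducible form), choosing at each negatively marked node a negatively marked premise; since these rules only extend the antecedent, antecedent-inclusion makes $\leq$ a partial order and secures \emph{monotonicity} of the valuation $V(S,p)=1 \Leftrightarrow p \in S^a$. The modal relation $R$ is generated by the applications of $\rsch_{\rm SL}^{a}$ and $\leftimprule_{{\rm SL},1}$, whose first premises produce, at a node refuting $\Box\phi$, an $R$-successor refuting $\phi$; I take $R$ transitive and $\leq$ reflexive--transitive. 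Four model conditions then have to be verified: transitivity and converse well-foundedness of $R$ (the latter from finiteness of the tree, equivalently from the well-order $\ord^{c}$), the inclusion $R \subseteq {\leq}$ and the interaction $w\leq v R x \Rightarrow wRx$ (both guaranteed because a modal premise carries $\boxdot\Gamma$, hence all of $\Gamma$, into the successor's antecedent). This makes $\calM$ a genuine $\iSL$-model.

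The crux is the \emph{Truth Lemma}: for every world $S$ and every formula, if $\chi \in S^a$ then $\calM,S \Vdash \chi$, and if $S^s=\{\chi\}$ then $\calM,S \not\Vdash \chi$. I would prove this by induction on the formula (measured by weight, as for termination). The atomic case is immediate from $V$; the propositional cases use irreducibility together with the negative marking to supply the required $\leq$-successors (for $R\vee$ and $L\!\imp\!\imp^a$) and the already-applied invertible rules in the other direction, invoking Lemma~\ref{lemma semantic invertible} to transfer refutations downward. The modal cases are the delicate ones. If $\Box\phi \in S^s$, negativity at the application of $\rsch_{\rm SL}^{a}$ yields an $R$-successor $S'$ with $\phi \in (S')^s$, so $\calM,S'\not\Vdash\phi$ by the induction hypothesis and hence $\calM,S\not\Vdash\Box\phi$; the analogous step for $\leftimprule_{{\rm SL},1}$ handles $\Box\phi\imp\psi \in S^a$. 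Conversely, forcing a boxed antecedent formula $\Box\chi$ at all $R$-successors works precisely because each modal premise contains $\boxdot\Gamma$, so $\chi$ (and $\Box\chi$) persist into every successor's antecedent; this is exactly where the completeness/strong-L\"ob ingredient of the rules is used, and where converse well-foundedness prevents the nested boxed case from looping.

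Finally, applying the invertible reductions at the root and Lemma~\ref{lemma semantic invertible} shows that the root sequent $\Gamma \Impl \Delta$ is refuted at the world obtained from it, whence $\Gamma \not\models \Delta$, which is the contrapositive. I expect the main obstacle to be the Truth Lemma's modal cases: organising the induction so that the reappearance of $\Box\phi$ in the antecedent of the modal premises does not block well-foundedness, while simultaneously verifying that $R \subseteq {\leq}$ and the $\leq$--$R$ interaction hold for the chosen successors. Once completeness is established, combining it with \emph{Soundness} identifies the cut-free system $\DYSLb$ with the $\iSL$-valid sequents, thereby yielding cut-elimination for $\DYSLb$ semantically, as announced.
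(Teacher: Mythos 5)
Your overall strategy (contrapositive, finite proof-search tree from Theorem~\ref{thmtermination}, countermodel read off the negatively marked nodes) matches the paper's, but your specific construction contains a genuine gap: your Truth Lemma is false as stated. You take \emph{every} negatively marked irreducible sequent $S$ as a world, with valuation $V(S,p)=1 \Leftrightarrow p \in S^a$, and claim $\chi \in S^a \Rightarrow \calM,S \Vdash \chi$. This breaks down whenever the negativity of $S$ flows through the \emph{second} premise of $L\!\imp\!\imp^a$ or of $\leftimprule_{{\rm SL},1}$ while the first premise is provable. Concrete counterexample: $S = \bigl((r\imp r)\imp q' \Impl q\bigr)$. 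Its only backward rule is $L\!\imp\!\imp^a$; the first premise $r\imp q', r \Impl r$ is an extended axiom (positive), the second premise $q' \Impl q$ is negative, so $S$ is negative. Any world refuting $S$ must force $(r\imp r)\imp q'$, and since every world forces $r \imp r$, it must force $q'$ --- an atom \emph{not} in $S^a$. So no world whose valuation is read off $S^a$ can serve, and your claimed implication $\chi\in S^a \Rightarrow S \Vdash \chi$ fails at $\chi = (r\imp r)\imp q'$ (the instance $v = S$ of the implication clause already forces the contradiction). The paper avoids this by splitting into two cases: when some second premise ($\Gamma'_i,\gamma_i \Impl \Delta$ or $\Pi_j,\Box\Gamma'_j,\psi_j \Impl \Delta$) is negative, the countermodel for that premise is \emph{reused unchanged} as the countermodel for the conclusion (semantically $\gamma \models (\phi\imp\psi)\imp\gamma$ and $\psi \models \Box\phi\imp\psi$); only when all negative premises are first premises does one glue the rooted submodels under a fresh root. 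Your global canonical-model scheme has no analogue of this case distinction, and patching it essentially forces you back to the paper's recursion on the height of the proof-search tree, where forcing is verified at the fresh root by direct case analysis rather than by a formula-induction Truth Lemma over a fixed global frame.

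A secondary inaccuracy: your justification of monotonicity, ``these rules only extend the antecedent,'' is wrong for $L\!\imp\!\imp^a$, whose first premise \emph{removes} the principal formula $(\phi\imp\psi)\imp\gamma$ and replaces it by $\psi\imp\gamma,\phi$. Atoms do persist, so the atomic valuation is monotone, but the persistence you invoke in the modal cases (e.g.\ $\Box\chi\in S^a$ forced at all $R$-successors, and the frame conditions $R\subseteq{\leq}$ and $w\leq vRx \Rightarrow wRx$ ``by construction'') is not free in a single global model where $\leq$-edges from $L\!\imp\!\imp^a$ and $R$-edges interact after closure. The paper handles exactly this by passing to generated submodels with roots $k_i, x, l_j$, imposing the closures on the glued frame, and then checking $w \Vdash \Box\psi$ via transitivity together with the fact that modal premises carry $\boxdot\Gamma'$ --- a verification your proposal gestures at but does not secure.
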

\begin{proof}
We prove by contradiction that whenever there is no cut-free proof for $S=(\Gamma \Impl \Delta)$, then there is an $\iSL$-model that refutes $\Gamma \Impl \Delta$. Recall that the modal relation $R$ should be transitive and conversely well-founded. We assume that $\not \proofs_\DYSLprime S$, which means that~$S$ is marked as negative in the proof search tree. We use an induction on the height $d$ of the finite proof search tree for $\Gamma \Impl \Delta$. We start with the leaves for which $d=1$. For $d>1$ we distinguish between saturated and non-saturated negative sequents. Recall that for each non-saturated sequent in the proof search tree marked as negative, there is \emph{at least} one predecessor marked as negative. And for each saturated sequent marked as negative, \emph{for all} backwards applied rules, \emph{at least} one predecessor is marked as negative. 

If $d=1$, then $\Gamma \Impl \Delta$ is a saturated sequent with no possible backwards rule. It cannot be an initial sequent since $\Gamma \Impl \Delta$ is underivable. Together with Lemma \ref{lemirreducible}, we see that formulas in $\Gamma$ have the form: $p$ with $\{p\} \neq \Delta $, $p \impl \phi$ with $p \notin \Gamma$ or $\Box \phi$. And $ \Delta =\{q\}$ with $q \notin \Gamma$, $\Delta=\{ \bot \}$ or $\Delta=\emptyset$. We consider a one-point model $\calM = (W, \leq, R, V)$ by $W = \{w \}$, $\leq = \{ (w,w)\}$, $R = \emptyset$, $V(w,p)=1$ iff $p \in \Gamma$. This is visualized by \vspace{-0.5em}
\begin{center} \begin{tikzpicture}[modal]
\node[point] (w)  [label=below:{$p$ for $p \in \Gamma$}] {};
\end{tikzpicture}
\end{center}\vspace{-0.5em}
This is clearly an $\iSL$-model with $\calM,w \Vdash \Gamma$, but $\calM,w \not \Vdash \Delta$.

Now suppose $d>1$ and $\Gamma \Impl \Delta$ is not saturated. This means that the last rule applied is invertible. Sequent $\Gamma \Impl \Delta$ is marked as negative, so there is a premise $\Gamma' \Impl \Delta'$ marked as negative. This means that $\Gamma' \Impl \Delta'$ is underivable. By induction hypothesis, there is a countermodel $\calM$ such that $\calM \not \models \Gamma' \Impl \Delta'$. By Lemma \ref{lemma semantic invertible}, we know that the rule is also semantic invertible, which means that $\calM \not \models \Gamma \Impl \Delta$.

Now let $d>1$ and suppose $\Gamma \Impl \Delta$ is saturated. This means that last rules applied in the proof search are non-invertible rules. We have different predecessor nodes of $\Gamma \Impl \Delta$, depending on the possible rules applied to $\Gamma \Impl \Delta$. For each applied rule, at least one premise is marked as negative. For each possible application of a non-invertible rule we have the following:
\begin{enumerate}[label=(\alph*)]
\item $(L\!\imp\!\imp')$: $\Gamma \Impl \Delta$ has the form $\Gamma'_i, (\phi_i \impl \psi_i) \impl \gamma_i \Impl \Delta$ with $\Gamma'_i, \psi_i \impl \gamma_i, \phi_i\Impl \psi_i$ or $\Gamma'_i, \gamma_i \Impl \Delta$ as negative predecessor in the proof search tree.
\item $(\leftimprule_{{\rm SL}})$: $\Gamma \Impl \Delta$ has the form $\Pi_j, \Box \Gamma'_j, \Box \phi_j \impl \psi_j \Impl \Delta_j$ where one of the sequents $\Pi_j, \boxdot \Gamma'_j, \Box \phi_j, \Box \phi_j \impl \psi_j \Impl \phi_j$ or $\Pi_j, \Box \Gamma'_j, \psi_j \Impl \Delta$ is a negative predecessor.
\item $(\rsch_{\rm SL}^{4})$: $\Gamma \Impl \Delta$ has the form $\Pi, \Box \Gamma' \Impl \Box \phi$ where premise $\Pi, \boxdot \Gamma', \Box \phi \Impl \phi$ is negative,
\item $(R\vee)$: $\Gamma \Impl \Delta$ has the form $\Gamma \Impl \phi \vee \psi$ and both premises $\Gamma \Impl \phi$ and $\Gamma \Impl \psi$ are negative predecessors.
\end{enumerate}

Note that case (c) and (d) cannot occur both at the same time. In the following we just say predecessor to mean negative predecessor, since those are the only of interest. We continue by analyzing two different possibilities.
\begin{itemize}
\item First assume that at least one of the following occurs: (a) with predecessor $\Gamma'_i,\gamma_i \Impl \Delta$ for some $i$ or (b) with predecessor $\Pi_j, \Box \Gamma'_j,\psi_j \Impl \Delta$ for some $j$. Those cases are treated in a similar way as the semantic invertible rules. In each case, we find a countermodel for the corresponding predecessor using the induction hypothesis. In both cases the found countermodel is also a countermodel for $\Gamma \Impl \Delta$.
\item Now assume that none of the previous cases occur. We apply the induction hypothesis to each of the applied rules of the concerned saturated sequent:
\begin{itemize}
\item In (a): for each $i$ we have predecessor $S_i = (\Gamma'_i, \psi_i \impl \gamma_i, \phi_i \Impl \psi_i)$. So by induction hypothesis there exists for each $i$ a model $\calK_i'$ and world $k_i$ such that $\calK'_i, k_i \not \Vdash S_i$. Let $\calK_i$ be the submodel with root $k_i$, then $\calK_i,k_i \not \Vdash S_i$.
\item In (b): for each $j$ we have predecessor $S_j = (\Pi_j, \boxdot \Gamma'_j, \Box \phi_j, \Box \phi_j \impl \psi_j \Impl \phi_j)$. So by induction hypothesis there exists for each $j$ a model $\calL_j'$ and world $l_j$ such that $\calL'_j, l_j \not \Vdash S_j$. Let $\calL_j$ be the submodel with root $l_j$, then $\calL_j,l_j \not \Vdash S_j$.
\item In (c): we apply the induction hypothesis to predecessor $S = (\Pi, \boxdot \Gamma', \Box \phi \Impl \phi)$. So we have a model $\calX'$ and world $x$ such that $\calX', x \not \Vdash S$. Let $\calX$ be the submodel with root $x$, then $\calX,x \not \Vdash S$.
\item In (d): apply the induction hypothesis to both predecessors $S_1 = (\Gamma \Impl \phi)$ and $S_2 = (\Gamma \Impl \psi)$ to get two models $\calH_1, \calH_2$ with roots $h_1,h_2$ such that $\calH_1,h_1 \not \Vdash S_1$ and $\calH_2, h_2 \not \Vdash S_2$. 
\end{itemize}

Sequent $\Gamma \Impl \Delta$ is saturated, so $\Delta$ is a disjunction, boxed formula, an atom, bottom or empty. The latter two are equivalent. When $\Delta$ contains a boxed formula, say $\Delta=\{ \Box \phi \}$, case (c) occurs, but case (d) does not. Then we construct the following model $\calM$. In this picture, relation $R$ should be understood as the transitive closure and whenever $v \leq y R z$ for some $v,y,z$ then $v R z$. Dashed arrows represent relation $\leq$ and the other arrows represent relation $R$. Double headed arrows are examples of $R$ relations defined by the closure conditions.

\begin{center} \begin{tikzpicture}[modal]
\node[world] (w) [label=below:{$w \Vdash p$ if $p \in \Gamma$}] {$w$};
\node[world] (x) [above =of w] {$x$};
\node[itria] (X) [above =of x, yshift=2.5mm]{\small{$\calX$}};
\node[world] (kn) [left =of x] {$k_n$};
\node[itria] (Kn) [above =of kn, yshift=2mm]{\small{$\calK_n$}};
\node[world] (ki) [left =of kn, xshift=0.75cm] {$\dots$};
\node[world] (k1) [left =of ki, xshift=0.75cm] {$k_1$};
\node[itria] (K1) [above =of k1, yshift=2mm]{\small{$\calK_1$}};
\node[world] (l1) [right =of x] {$l_1$};
\node[itria] (L1) [above =of l1, yshift=2mm]{\small{$\calL_1$}};
\node[world] (lj) [right =of l1, xshift=-0.75cm] {$\dots$};
\node[world] (lm) [right =of lj, xshift=-0.75cm] {$l_m$};
\node[itria] (Lm) [above =of lm, yshift=2mm]{\small{$\calL_m$}};
\node[world] (model) [left =of k1, xshift = 0.5cm] {$\calM \ \ \ \ = $};

\path[->] (w) edge[dashed]   (k1);
\path[->] (w) edge[dashed]  (kn);
\path[->] (w) edge[out=100, in=260, dashed]  (x);
\path[->] (w) edge[out=80, in=280]  (x);
\path[->] (w) edge[out=55, in=215, dashed]  (l1);
\path[->] (w) edge[out=35, in=235]  (l1);
\path[->] (w) edge[out=25 ,in=195, dashed]  (lm);
\path[->] (w) edge[out=15, in=205]  (lm);
\path[->>] (w) edge[bend left]  (X);
\path[->>] (w) edge[out=155,in=-45]  (K1);
\end{tikzpicture}
\end{center}

For each $y,v,z$ we have $y\leq v Rz$  implies $yRv$ and $yRv$ implies $w \leq v$ by construction. Relation $R$ is transitive and conversely well-founded by construction. Further note that the valuation in $\calM$ respects monotonicity, since $\calM,k_i \Vdash p$, $\calM,x \Vdash p$ and $\calM,l_j \Vdash p$ for each $p\in \Gamma$. This can be seen by inspection of the rules. So we can conclude that~$\calM$ is an $\iSL$-model.

Now we claim that this model refutes sequent $\Gamma \Impl \Delta$, where $\Delta=\{ \Box \phi \}$. We show that $\calM,w \Vdash \psi$ for each $\psi \in \Gamma$ and $\calM, w \not \Vdash \Box \phi$. The latter follows directly by the induction assumption in (c) implying $\calM,x \not \Vdash \phi$. For the first, recall that sequent $\Gamma \Impl \Box \phi$ is a saturated sequent. We treat each $\psi \in \Gamma$ depending on its form.
\begin{itemize}
\item For $\Box \psi \in \Gamma$, let $y$ such that $w R y$. We have to show that $\calM,y\Vdash \psi$. If $y \in \calK_i$, then $\calM,y\Vdash \psi$ since $k_i R y$ and $\calM,k_i \Vdash \Box \psi$. Now consider $y \in \calX$. By construction of $R$, we find a finite sequence $y=z_1, z_2,  \dots, z_{n-1},z_n = x$ such that $xRz_{n-1}R\dots Rz_2Ry$. So $xRy$, because $R$ is transitive. By induction assumption (c) we have $\calM,x \Vdash \Box \psi,\psi$. Hence $\calM, y \Vdash \psi$. For $y \in \calL_j$ we proceed by a similar argument.

\item For $p \in \Gamma$, then $\calM,w \Vdash p$ by definition,

\item For $p \impl \psi \in \Gamma$, then $p \notin \Gamma$ because $\Gamma \Impl \Delta$ is saturated. Let $v \geq w$ and $\calM,v \Vdash p$. Since $p \notin \Gamma$ we have $v > w$ and therefore $v \geq k_i$ for some $i$ or $v \geq x$ or $v \geq l_j$ for some $j$. For each case we have that $p \impl \psi$ is also present in the antecedent of the predecessor. So $\calM, k_i \Vdash p \impl \psi$ for each $i$, $\calM, x \Vdash p \impl \psi$ and $\calM, l_j \Vdash p \impl \psi$ for each $j$. So indeed $\calM, v \Vdash \psi$.

\item For $(\phi_{i'} \impl \psi_{i'}) \impl \gamma_{i'} \in \Gamma$ for some $i'$. Let $v \geq w$ such that $\calM, v \Vdash \phi_{i'} \impl \psi_{i'}$. We treat different cases for $v$. For $v=w$, we use induction in (a) saying that $\calM, k_{i'} \not \Vdash \psi_{i'}$ and $\calM, k_{i'} \Vdash \phi_{i'}$. This implies $\calM,k_{i'} \not \Vdash \phi_{i'} \impl \psi_{i'}$. By monotonicity, $\calM,w \not \Vdash \phi_{i'} \impl \psi_{i'}$ which contradicts our assumption. So $v > w$ and again we have several possibilities: $v \geq k_{i'}$, $v \geq k_{i}$ for some $i \neq i'$, $v \geq x$ or $v \geq l_j$ for some $j$. For $v \geq k_{i'}$ we use $\calM, k_{i'} \Vdash \psi_{i'} \impl \gamma_{i'}$, $\calM, k_{i'} \Vdash \phi_{i'}$ and $\calM, k_{i'} \not \Vdash \psi_{i'}$. By monotonicity, $\calM, v \Vdash \psi_{i'} \impl \gamma_{i'}$ and $\calM, v \Vdash \phi_{i'}$. Using assumption $\calM, v \Vdash \phi_{i'} \impl \psi_{i'}$ gives $\calM, v \Vdash \gamma_{i'}$. For all other cases of $v$ we can use the fact that $(\phi_{i'} \impl \psi_{i'}) \impl \gamma_{i'}$ stays present in the predecessor of the rule. So we can conclude $\calM, k_i \Vdash (\phi_{i'} \impl \psi_{i'}) \impl \gamma_{i'}$ for each $i \neq i'$, $\calM, x \Vdash (\phi_{i'} \impl \psi_{i'}) \impl \gamma_{i'}$ and $\calM, l_j \Vdash (\phi_{i'} \impl \psi_{i'}) \impl \gamma_{i'}$ for each $j$. This implies $\calM, v \Vdash \gamma_{i'}$.

\item For $\Box \phi_{j'} \impl \psi_{j'} \in \Gamma$ for some $j'$, let $v \geq w$ and $v \Vdash \Box \phi_{j'}$. We see that $v \neq w$, because suppose $v = w$, we have by induction in (b) $\calM, l_{j'} \not \Vdash \phi_{j'}$, hence $w \not \Vdash \Box \phi_{j'}$ which contradicts our assumption. So $v \geq k_i$ for some $i$, $v \geq x$ or $v \geq l_j$ for some $j$. For all those cases we have $\Box \phi_{j'} \impl \psi_{j'}$. This is also true for $v \geq l_{j'}$ by keeping $\Box \phi_{j'} \impl \psi_{j'}$ in the premise of rule $\leftimprule_{{\rm SL},1}$.
\end{itemize}
To conclude, we found an $\iSL$-model $\calM$ such that $\calM \not \models \Gamma \Impl \Delta$, where $\Delta$ is a boxed formula $\Box \phi$. 

When $\Delta$ is a disjunction $\phi \vee \psi$, case (d) occurs, but case (c) does not. In this case we construct the following countermodel $\calM'$, defined in a similar way as before.
\begin{center} \begin{tikzpicture}[modal]
\node[world] (w) [label=below:{$w \Vdash p$ if $p \in \Gamma$}] {$w$};
\node[world] (h1) [above =of w, xshift=-0.9cm] {$h_1$};
\node[itria] (H1) [above =of h1, yshift=2mm]{\small{$\calH_1$}};
\node[world] (h2) [right =of h1] {$h_2$};
\node[itria] (H2) [above =of h2, yshift=2mm]{\small{$\calH_2$}};
\node[world] (kn) [left =of h1] {$k_n$};
\node[itria] (Kn) [above =of kn, yshift=2mm]{\small{$\calK_n$}};
\node[world] (ki) [left =of kn, xshift=0.75cm] {$\dots$};
\node[world] (k1) [left =of ki, xshift=0.75cm] {$k_1$};
\node[itria] (K1) [above =of k1, yshift=2mm]{\small{$\calK_n$}};
\node[world] (l1) [right =of h2] {$l_1$};
\node[itria] (L1) [above =of l1, yshift=2mm]{\small{$\calL_1$}};
\node[world] (lj) [right =of l1, xshift=-0.75cm] {$\dots$};
\node[world] (lm) [right =of lj, xshift=-0.75cm] {$l_m$};
\node[itria] (Lm) [above =of lm, yshift=2mm]{\small{$\calL_m$}};
\node[world] (model) [left =of k1, xshift = 0.3cm] {$\calM' \ \  = $};

\path[->] (w) edge[dashed]   (k1);
\path[->] (w) edge[dashed] (kn);
\path[->] (w) edge[dashed] (h1);
\path[->] (w) edge[dashed]  (h2);
\path[->] (w) edge[dashed, out=40, in=200]  (l1);
\path[->] (w) edge[out=25, in=220]  (l1);
\path[->] (w) edge[dashed, out=23, in = 200] (lm);
\path[->] (w) edge[out=13, in=210] (lm);
\path[->>] (w) edge[bend left]  (H1);
\path[->>] (w) edge[out=45,in=230, yshift=2mm] (L1);
\end{tikzpicture}
\end{center}
The proof is similar as before. When $\Delta$ is an atom $p$ or $\bot$ we only create the model using $\calK_i$ and $\calL_j$, because both case (c) and (d) do not occur.
\end{itemize}
\end{proof}

Note that the proof of Theorem \ref{thm countermodels} shows us that $\iSL$ is complete with respect to finite tree-like models. 

\begin{corollary}\label{cor:cutSL4a}
$\DYSLprime$ admits the rules {\it Contraction} and {\it Cut}.
\end{corollary}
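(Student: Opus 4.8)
The plan is to obtain Cut-admissibility in $\DYSLb$ by combining the Soundness and Completeness theorems, exploiting the crucial feature that the Completeness theorem (Theorem~\ref{thm countermodels}) does not merely produce \emph{some} proof but a \emph{cut-free} one. This is the semantic route to cut-elimination, in the spirit of Avron's treatment of classical $\GL$ referred to at the start of the section: soundness-with-Cut plus cut-free completeness squeezes out every application of Cut.

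First I would record that Cut is a semantically sound rule for $\iSL$-models. Indeed, if $\Gamma_1 \models \phi$ and $\phi, \Gamma_2 \models \Delta$, then for any model $\calM$ and world $w$ with $\calM, w \Vdash \Gamma_1, \Gamma_2$ we get $\calM, w \Vdash \phi$ from the first premise and hence $\calM, w \Vdash \Delta$ from the second, so $\Gamma_1, \Gamma_2 \models \Delta$. Consequently the induction on derivation height that establishes the Soundness theorem extends by exactly one further case to any derivation that uses the Cut rule: every sequent provable in $\DYSLb$ augmented with Cut is valid in all $\iSL$-models.

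With this in hand the argument is immediate. Suppose $\Gamma \Rightarrow \Delta$ has a derivation in $\DYSLb$ possibly using Cut. By the extended soundness just described, $\Gamma \models \Delta$. Applying Theorem~\ref{thm countermodels} then gives $\vdash_\DYSLb \Gamma \Rightarrow \Delta$, and since the countermodel construction in its proof refutes precisely those sequents lacking a \emph{cut-free} proof, this derivation is cut-free. Hence every sequent derivable with Cut is already derivable without it, which is exactly the statement that Cut is admissible in $\DYSLb$.

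I do not expect a serious obstacle here: the only point requiring attention is the very first step, namely checking that enlarging $\DYSLb$ with Cut preserves soundness, and this reduces to the elementary semantic verification above. All the genuine difficulty has already been absorbed into Theorem~\ref{thm countermodels}, whose countermodel construction is what guarantees the resulting proof is cut-free; granted that theorem, the corollary follows in a line or two.
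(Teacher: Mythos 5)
Your proposal is correct and is exactly the paper's intended argument: the whole point of Section~\ref{Kripke iSL} is to derive cut-elimination semantically, by noting that Cut is sound for $\iSL$-models while Theorem~\ref{thm countermodels} yields a proof in $\DYSLb$ itself, which contains no Cut rule and hence is cut-free. Nothing is missing; the elementary verification that Cut preserves validity is the only step the paper leaves implicit, and you supply it.
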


\section{Other axiomatizations}
 \label{secother}
In this section we discuss alternative axiomatizations of $\iSL$. \

\subsection{Hilbert systems}
The Hilbert systems for $\iSL$ are already briefly discussed in the introduction. Here we state them again to provide a complete list of proof systems for $\iSL$. Let $\iHSL$ denote the Hilbert system consisting of the Hilbert system $\iK$ extended by the Strong L\"ob Principle 
\[
 {\rm (SL)} \ \ \  (\bx\phi \imp \phi) \imp \phi.
\]
Here $\iK$ consists of a standard Hilbert system for $\IPC$ extended by the axiom 
\[
 {\rm (K)} \ \ \ \bx(\phi\imp \psi) \imp (\bx\phi\imp\bx\psi)
\]
and the Necessitation rule (from $\proofs \phi$ infer $\proofs \Box \phi$). Cut-elimination for $\LJSL$ (Theorem~\ref{thmcutadm}), implies
\[
 \af_{\iHSL} \phi \text{ if and only if } \af_{\LJSL}(\ \seq \phi). 
\]
Another Hilbert style axiomatization, denoted $\iHGLC$, consists of $\iK$, L\"ob's Principle {\rm (GL)}  and the Completeness Principle {\rm (CP)} \citep{litak14}.
\[
  {\rm (GL)} \ \ \  \bx(\bx\phi \imp \phi) \imp \bx\phi \ \ \ \ \ \ \ 
  {\rm (CP)} \ \ \  \phi \imp \bx\phi.
\] 

\subsection{Sequent calculi}
We have seen two variants of the modal rule, that is $\rsch_{\rm SL}$ and $\rsch_{\rm SL}^4$, and we can also identify a third one that we call $\rsch_{\rm SL}^{4a}$: 
$$
  \infer[\rsch_{\rm SL}]{\bx\Sig,\Pi,\bx\Ga \seq \bx\phi}{\Pi,\dbx\Ga,\bx\phi \seq \phi} \ \ \ \ 
  \infer[\rsch_{\rm SL}^4]{\Pi,\bx\Ga \seq \bx\phi}{\Pi,\dbx\Ga,\bx\phi \seq \phi}  \ \ \ \ 
  \infer[\rsch_{\rm SL}^{4a}]{\Sigma,\bx\Ga \seq \bx\phi}{\Sigma,\dbx\Ga,\bx\phi \seq \phi}
$$
There is no restriction on the formulas in $\Sig$, but $\Pi$ does not contain boxed formulas. Based on these rules we can define different but equivalent systems in addition to the systems defined in Figure~\ref{figlogics}:

\[
 \begin{array}{ll} 
\text{name} & \text{calculus}  \\
\LJSLvier\  & \LJm +\rsch_{\rm SL}^{4}  \\
\LJSLa\  & \LJm + \rsch_{\rm SL}^{4a}  \\
\DYSLa\ & \DYm + \leftimprule_{\rm SL} + \rsch_{\rm SL}^{4a} \\
 \end{array}
\]

\begin{lemma}\label{lem systems equivalent}
Systems $\LJSL$, $\LJSLvier$, $\LJSLa$, $\DYSL$ and $\DYSLa$ are equivalent. In addition, $\DYSL$ and $\DYSLa$ are terminating modulo extended axioms.
\end{lemma}
\begin{proof}
By Theorem~\ref{thmequivalence}, systems $\LJSL$ and $\DYSL$ are equivalent. Therefore, it is sufficient to show that $\LJSLvier$ and $\LJSLa$ are equivalent to $\LJSL$ and that $\DYSLa$ is equivalent to $\DYSL$. For each equivalence we have to show two directions which are all proved by induction on the height $d$ of a derivation. Since all axioms and nonmodal rules of the considered systems are equal, it suffices to focus on the modal rules. We only rely on the height-preserving weakening in $\LJSL$ and $\DYSL$ from Lemma~\ref{lemstructural}. Let us only look at the equivalence between $\LJSL$ and $\LJSLa$. For one direction suppose that the last inference of the derivation in $\LJSL$ looks as follows:
\[
 \infer[\rsch_{\rm SL}]{\Box \Sig, \Pi, \bx\Ga \seq \bx\phi}{ \Pi, \dbx\Ga,\bx\phi \seq \phi}
\]
By height-preserving weakening $\proofs^{d-1}_\LJSL (\Box \Sigma, \Pi, \boxdot \Gamma, \Box \phi \Impl \phi)$. By the induction hypothesis this is also derivable in $\LJSLa$. An application of $\rsch_{\rm SL}^{4a}$ gives the desired result.
For the other direction suppose the last inference of the derivation looks as follows:
\[
 \infer[\rsch_{\rm SL}^{4a}]{\Sig,\bx\Ga \seq \bx\phi}{\Sig,\dbx\Ga,\bx\phi \seq \phi}
\]
By the induction hypothesis there exists a proof in $\LJSL$ of the premise. Suppose that $\Sig=\Sig^n \cup \bx\Sig^b$, where the formulas in $\Sig^n$ are not boxed. By Lemma~\ref{lemstructural} there exists a proof in $\LJSL$ of $(\Sig^n,\Sig^b,\bx\Sig^b,\dbx\Ga,\bx\phi \seq \phi)$ as well. An application of $\rsch_{\rm SL}$ shows that the endsequent has a proof in $\LJSL$. 

Termination modulo extended axioms for $\DYSL$ is shown in Theorem \ref{thmtermination} and the same proof applies to $\DYSLa$.
\end{proof}

Hilbert style calculus $\iHGLC$ suggests a sequent calculus for $\iSL$ consisting of $\LJGL$ and the following rule:
\[
 \infer[L\Box]{\Ga, \phi \seq \De}{\Ga,\bx\phi \seq \De}
\]

We call it $\LJGLC$. 

\begin{lemma}
$\LJGLC$ and $\LJSL$ are equivalent.
\end{lemma}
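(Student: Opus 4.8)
The plan is to observe that $\LJGLC$ and $\LJSL$ share every rule of $\LJm$ (the same axioms and propositional rules), so it suffices to show that each calculus's special modal rules are admissible in the other; the equivalence then follows by the routine induction on derivation height in which every application of a special rule is replaced by its simulation. Throughout I would use that weakening and contraction are admissible in both systems (for $\LJSL$ by Lemma~\ref{lemstructural}, for $\LJGLC$ by the standard height-preserving argument, which goes through unchanged for the extra rule $L\Box$) and that {\it Cut} is admissible in $\LJSL$ by Theorem~\ref{thmcutadm}.

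For the inclusion $\af_{\LJGLC}S\Rightarrow\af_{\LJSL}S$ I would simulate both special rules of $\LJGLC$ inside $\LJSL$. First, $\rsch_{\rm GL}$ is admissible in $\LJSL$: given its premise $\dbx\Ga,\bx\phi\seq\phi$, split the side context as $\Sig=\Sig^n\cup\bx\Sig^b$ with $\Sig^n$ containing no boxed formula, weaken the premise by $\Sig^n$ to obtain $\Sig^n,\dbx\Ga,\bx\phi\seq\phi$, and apply $\rsch_{\rm SL}$ with $\Pi:=\Sig^n$ and boxed side context $\bx\Sig^b$; the conclusion is exactly $\Sig,\bx\Ga\seq\bx\phi$. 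Second, $L\Box$ is admissible in $\LJSL$: the sequent $\phi\seq\bx\phi$ is $\LJSL$-derivable by one application of $\rsch_{\rm SL}$ whose premise is an extended axiom (taking $\Pi=\{\phi\}$ when $\phi$ is not boxed, so the premise is $\phi,\bx\phi\seq\phi$; and taking $\bx\Ga=\{\phi\}$ when $\phi=\bx\psi$ is boxed, so the premise is the extended axiom $\psi,\bx\psi,\bx\bx\psi\seq\bx\psi$). Then from an $\LJSL$-proof of $\Ga,\bx\phi\seq\De$ one obtains $\Ga,\phi\seq\De$ by weakening $\phi\seq\bx\phi$ to $\Ga,\phi\seq\bx\phi$, cutting on $\bx\phi$, and contracting.

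For the reverse inclusion $\af_{\LJSL}S\Rightarrow\af_{\LJGLC}S$ I would simulate $\rsch_{\rm SL}$ in $\LJGLC$. Given the premise $\Pi,\dbx\Ga,\bx\phi\seq\phi$ with $\Pi$ unboxed, weaken it on the left by $\Sig,\bx\Sig,\bx\Pi$; its antecedent then becomes $\dbx\Delta,\bx\phi$ for $\Delta=\Sig\cup\Pi\cup\Ga$, so one application of $\rsch_{\rm GL}$ (taking its side context to be empty) yields $\bx\Sig,\bx\Pi,\bx\Ga\seq\bx\phi$. Finally, applying $L\Box$ once to each of the formulas of $\bx\Pi$ re-exposes them unboxed and produces precisely the $\rsch_{\rm SL}$-conclusion $\bx\Sig,\Pi,\bx\Ga\seq\bx\phi$; no contraction is needed since the weakening introduced exactly the boxed copies that $L\Box$ consumes.

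The argument is essentially multiset bookkeeping. The step that needs the most care is this last simulation of $\rsch_{\rm SL}$ in $\LJGLC$: one must check that after $\rsch_{\rm GL}$ has boxed the entire context, $L\Box$ can peel the boxes off exactly the formulas of $\Pi$ and no others, which is guaranteed by the restriction that $\Pi$ be unboxed. One should also verify the boxed case of the auxiliary derivation of $\phi\seq\bx\phi$ and confirm that admissibility of weakening for $\LJGLC$ survives the addition of $L\Box$, both of which are routine.
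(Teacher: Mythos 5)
Your proof is correct and follows essentially the same route as the paper: both directions proceed by rule simulation, deriving $L\Box$ in $\LJSL$ via the sequent $\phi \seq \bx\phi$ (obtained from $\rsch_{\rm SL}$ applied to an extended axiom, with the same case split on whether $\phi$ is boxed) followed by {\it Cut}, and deriving $\rsch_{\rm SL}$ in $\LJGLC$ by weakening the premise with $\bx\Pi$, applying $\rsch_{\rm GL}$, and peeling off the boxes on $\Pi$ with repeated $L\Box$. The only differences are cosmetic: you place $\bx\Sig$ into the weakening step rather than into the side context of $\rsch_{\rm GL}$ as the paper does, and you spell out the simulation of $\rsch_{\rm GL}$ in $\LJSL$, which the paper dismisses as easy.
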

\begin{proof}
We prove that the rules of $\LJGLC$ are derivable in $\LJSL$ and vice versa. Each nonmodal rule is the same for both calculi, so we only consider the modal rules. To prove that $\LJGLC$ implies $\LJSL$, we show that $\rsch_{\rm SL}$ is derivable in $\LJGLC$. It is easy to see that $\LJGLC$ admits weakening.
\begin{center}
\AXC{$\Pi, \boxdot \Gamma, \Box \phi \Impl \phi$}
\doubleLine
\RL{{\it Weakening}}
\UIC{$\boxdot \Pi, \boxdot \Gamma, \Box \phi \Impl \phi$}
\RL{$\rsch_{\rm GL}$}
\UIC{$\Box \Sigma, \Box \Pi, \Box \Gamma \Impl \Box \phi$}
\RL{$L\Box$}
\UIC{$\Box \Sigma, \Pi, \Box \Gamma \Impl \Box \phi$}
\DisplayProof
\end{center}
For the other direction we have to show that $\rsch_{\rm GL}$ and $L\Box$ are derivable in $\LJSL$. $\rsch_{\rm GL}$ is easy and the argument for $L\Box$ is as follows, where we use the cut rule in $\LJSL$. We consider the two cases, where $\phi$ is boxed (say $\phi = \Box \phi'$) or not.
\begin{center}
\AXC{$\boxdot \phi', \Box \Box \phi' \Impl \Box \phi'$}
\RL{$\rsch_{\rm SL}$}
\UIC{$\Box \phi' \Impl \Box \Box \phi'$}
\AXC{$\Gamma, \Box \Box \phi' \Impl \Delta$}
\RL{$Cut$}
\BIC{$\Gamma, \Box \phi' \Impl \Delta$}
\DisplayProof  \ \ \ 
\AXC{$\phi, \Box \phi \Impl \phi$}
\RL{$\rsch_{\rm SL}$}
\UIC{$\phi \Impl \Box \phi$}
\AXC{$\Gamma, \Box \phi \Impl \Delta$}
\RL{$Cut$}
\BIC{$\Gamma, \phi \Impl \Delta$}
\DisplayProof
\end{center} 
\end{proof}

We say that a Hilbert system is equivalent to a sequent calculus if whenever formula $\phi$ is provable in the Hilbert system, the sequent $(\Impl \phi)$ is provable in the sequent calculus. The following equivalences have been obtained in this paper.  

\begin{corollary}[Equivalence proof systems $\iSL$] \\
Proof systems  $\LJSL$, $\LJSLa$, $\LJSLvier$,  $\DYSL$, $\DYSLprime$, $\DYSLa$, $\LJGLC$, $\iHSL$ and $\iHGLC$  are all equivalent. 
\end{corollary}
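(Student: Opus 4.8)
The plan is to route every equivalence through the single hub $\LJSL$ and then invoke transitivity of ``equivalence''. Four of the six links to the hub are already in hand. First, $\LJSL$ and $\DYSL$ derive the same sequents by Theorem~\ref{thmequivalence}. Second and third, the two lemmas of Section~\ref{secother} give $\LJSLb \equiv \LJSL$ and $\LJGLC \equiv \LJSL$. Fourth, the displayed biconditional $\af_{\iHSL}\phi \Leftrightarrow \af_{\LJSL}(\ \seq \phi)$, which rests on cut-elimination for $\LJSL$ (Theorem~\ref{thmcutadm}), links the Hilbert system $\iHSL$ to the hub at the level of formulas. Hence only $\DYSLb$ and $\iHGLC$ remain to be attached.

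For $\DYSLb$ I would establish $\DYSLb \equiv \DYSL$ syntactically, by mutual admissibility of the modal and implication rules, using that {\it Cut} is admissible in both calculi (Corollary~\ref{corconcutdy} for $\DYSL$, Corollary~\ref{cor:cutSL4a} for $\DYSLb$). The direction $\af_{\DYSLb} S \Rightarrow \af_{\DYSL} S$ is largely immediate: $\rsch_{\rm SL}^{a}$ is literally an instance of $\rsch_{\rm SL}^{4}$ (take $\Sig=\Pi$, so that $\Pi,\boxdot\Ga,\bx\phi\seq\phi$ is $\Pi,\dbx\Ga,\bx\phi\seq\phi$), and the rules $L\!\imp\!\imp^a$, $\leftimprule_{{\rm SL},1}$, $\leftimprule_{{\rm SL},2}$ are derived from their $\DYSL$-counterparts by weakening together with an application of the admissible {\it Cut}. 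The converse direction $\af_{\DYSL} S \Rightarrow \af_{\DYSLb} S$ is symmetric: each $\DYSL$-rule is recovered in $\DYSLb$ by splitting the unrestricted context of $\rsch_{\rm SL}^{4}$ and $\leftimprule_{\rm SL}$ into a boxed part absorbed into $\bx\Ga$ and a non-boxed remainder $\Pi$, again using weakening and the {\it Cut} of Corollary~\ref{cor:cutSL4a}. (Semantically this is corroborated: the Soundness theorem and the Completeness theorem, Theorem~\ref{thm countermodels}, show that $\DYSLb$ proves exactly the $\iSL$-valid sequents, which is also the class captured by the hub.) Either way $\DYSLb$ joins the hub.

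For $\iHGLC$ I would reduce to the $\iHSL$ link already in place. The system $\iHGLC$ is $\iK$ extended by {\rm (GL)} and {\rm (CP)}, and over $\iK$ the Strong L\"ob Principle {\rm (SL)} is interderivable with the conjunction of {\rm (GL)} and {\rm (CP)}; this is precisely the equivalence of $\iSL$ with $\iGL + {\rm CP}$ recorded in the introduction and available from \citep{litak14}. Consequently $\af_{\iHGLC}\phi$ iff $\af_{\iHSL}\phi$, and composing with $\af_{\iHSL}\phi \Leftrightarrow \af_{\LJSL}(\ \seq\phi)$ attaches $\iHGLC$ to the hub. Should a self-contained argument be preferred, one can instead mimic the $\iHSL$--$\LJSL$ proof directly for the calculus $\LJGLC$ suggested by $\iHGLC$: derive each axiom of $\iHGLC$ as a sequent in $\LJGLC$ and simulate modus ponens (via the {\it Cut} inherited from $\LJSL$ through the $\LJGLC \equiv \LJSL$ lemma) and necessitation, and conversely check that every $\LJGLC$-rule preserves $\iHGLC$-provability under the interpretation $I(\cdot)$; this yields $\af_{\iHGLC}\phi \Leftrightarrow \af_{\LJGLC}(\ \seq\phi) \Leftrightarrow \af_{\LJSL}(\ \seq\phi)$. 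Collecting all links and applying transitivity proves the corollary.

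I expect the main obstacle to be the attachment of $\DYSLb$, because here two different readings of ``equivalence'' meet: sequent-level agreement among the $\DY$- and $\LJ$-style calculi on the one hand, and the purely semantic characterization furnished by soundness and completeness on the other. The tempting semantic shortcut ``both systems prove exactly the valid sequents'' needs soundness of $\LJSL$ (or $\DYSL$), which is not proved separately in the paper --- only $\DYSLb$ is shown sound --- so one must either supply that routine check or, as proposed above, keep the argument internal by verifying the mutual rule-admissibilities and leaning on the two {\it Cut}-admissibility results. The remaining verifications (the rule translations for $\DYSLb$ versus $\DYSL$ and the axiom-by-axiom simulation for $\iHGLC$) are routine but must be carried out with care about the restriction forbidding boxed formulas in the contexts $\Pi$.
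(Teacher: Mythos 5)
Your proposal is correct, and its skeleton --- route every system to a hub and apply transitivity --- is exactly what the paper's proof does, since that proof likewise just assembles Theorem~\ref{thmequivalence}, the two lemmas of Section~\ref{secother}, and the Hilbert--sequent correspondence. The interest lies in how you attach the last two links, which is where you genuinely diverge. For $\DYSLb$ you give a rule-by-rule syntactic simulation against $\DYSL$, leaning on the two cut-admissibility results (Corollaries~\ref{corconcutdy} and~\ref{cor:cutSL4a}); your observation that $\rsch_{\rm SL}^{a}$ is literally the $\Sig=\Pi$ instance of $\rsch_{\rm SL}^{4}$ is right, and the remaining simulations go through (for instance, $\leftimprule_{{\rm SL},1}$ is recovered in $\LJSL$ by absorbing $\bx\phi\imp\psi$ into the non-boxed context $\Pi$ of $\rsch_{\rm SL}$ and then applying $L\!\imp$, so cut is not even needed there), though you would still have to verify the routine structural facts (weakening, extended axioms, invertibility of $R\!\imp$) for $\DYSLb$, which the paper states only for $\LJSL$ and $\DYSL$. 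The paper instead argues more abstractly: since every sequent calculus in the list admits cut, each is equivalent to the Hilbert system $\iHSL$ by the standard axioms-plus-modus-ponens simulation, so the paper's effective hub is $\iHSL$ rather than $\LJSL$. For $\iHGLC$ you use the axiomatic interderivability of ${\rm (SL)}$ with ${\rm (GL)}+{\rm (CP)}$ over $\iK$, whereas the paper goes semantic: $\iHGLC$ is complete with respect to $\iSL$-models by \citep{litak14}, and $\DYSLb$ is sound and complete with respect to the same class (Theorem~\ref{thm countermodels}). Both routes import something from \citep{litak14}, just different facts; your fallback via $\LJGLC$ (deriving $\af_{\iHGLC}\phi \Leftrightarrow \af_{\LJGLC}(\seq\phi)$ and composing with the lemma $\LJGLC$--$\LJSL$, which already establishes the ${\rm (GL)}+{\rm (CP)}$ versus ${\rm (SL)}$ equivalence proof-theoretically) is arguably more self-contained than the paper's own argument. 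Finally, your caution about the semantic shortcut is well placed and consistent with the paper's behaviour: soundness is proved only for $\DYSLb$, and the paper's proof indeed never claims soundness of $\LJSL$ or $\DYSL$ directly.
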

\begin{proof}
We sum up the equivalences provided in the paper: $\LJSL$ is equivalent to $\LJSLa$, $\LJSLvier$,  $\DYSL$, $\DYSLa$, $\LJSLvier$ and $\LJGLC$ by the previous two lemmas. 
Calculus $\DYSLprime$ is shown to admit the cut rule (Corollary \ref{cor:cutSL4a}), which implies equivalence to $\LJSL$. We know that all sequent calculi admit the cut rule. For each sequent calculus this implies equivalence to Hilbert system $\iHSL$. Equivalence of~$\iHGLC$ with~$\DYSLprime$ follows from completeness of Kripke semantics and \citep{litak14}, where completeness for $\iHGLC$ with respect to $\iSL$-models is stated.
\end{proof}

We conclude this section with the following thought. One might expect that the following rules are also sound and complete for $\iSL$, where we drop each $\boxdot$, since $\boxdot \chi \leftrightarrow \chi$ for each formula $\chi$ by the Completeness Principle.
\[ \begin{array}{cc} 
  \infer[\rsch_{\rm SL}']{\bx\Sig,\Pi,\bx\Ga \seq \bx\phi}{\Pi,\Ga,\bx\phi \seq \phi} & 
  \infer[\rsch^{4'}_{\rm SL}]{\Pi,\bx\Ga \seq \bx\phi}{\Pi,\Ga,\bx\phi \seq \phi}  \\
  \\
  \infer[\rsch^{4a'}_{\rm SL}]{\Sigma,\bx\Ga \seq \bx\phi}{\Sigma,\Ga,\bx\phi \seq \phi}  &
  \infer[\leftimprule_{\rm SL}']{\Pi,\bx\Ga,\bx\phi \imp \psi \seq \De}{
   \Pi,\Ga,\bx\phi, \Box \phi \impl \psi \seq \phi & \Pi,\bx\Ga,\psi \seq \De}

\end{array}
\]
It is easily shown that the rules are sound, but completeness is again difficult (or impossible) to prove. The current order to show termination with rule $\rsch^{4}_{\rm SL}$ relies on the fact that boxed
formulas do not disappear in the antecedents by bottom-up applications of the rules. This crucial fact is not true for $\rsch^{4'}_{\rm SL}$. As formula $\phi$ is ‘duplicated’ in the premise, the question is with what order the system with this rule would terminate.

\section{Conclusion}
This paper introduces several sequent calculi for intuitionistic strong L\"ob logic $\iSL$, the two main ones are 
$\LJSL$ and $\DYSL$.
$\LJSL$ and $\DYSL$ are extensions of $\LJ$ and $\DY$, respectively, and it is shown that in both calculi the structural rules {\it Weakening}, {\it Contraction} and {\it Cut} are admissible. For $\DYSL$ it does not suffice to add a modal rule to $\DY$, as is the case for $\LJSL$, also an implication rule has to be added in order to deal with implications on the left which antecedent is a boxed formula. 

It is shown in Sections~\ref{sectermination} and \ref{secinterpolation} that $\DYSL$ is a terminating proof system and that it has interpolation (the termination is not needed for that), thus implying that $\iSL$ has interpolation, as is known from the literature. 
A constructive proof of the equivalence of~$\LJSL$ and~$\DYSL$ is provided in Section~\ref{secequivalence}. As mentioned in the introduction, terminating systems for a small class of intuitionistic modal logics are used to establish results on uniform interpolation \citep{iemhoff17}. In contrast to this work, the strong termination of $\DYSL$ is modulo extended axioms and is based on another well-ordering. It raises the question whether the methods developed in \citep{iemhoff17} can be adjusted to provide a syntactic proof of uniform interpolation for $\iSL$.

A well-known semantics for intuitionistic modal logic is the class of Kripke models with two relations, a modal relation  $R$ and a partial order $\leq$, the intuitionistic relation. As usual, the models are upwards persistent along the intuitionistic relation $\leq$ and $w \leq v R x$ implies $wRx$ for all nodes $w,v,x$. As is known from the literature \citep{litak14}, $\iSL$ is sound and complete with respect to the class of Kripke models in which $R$ is transitive, conversely well-founded and a subset of $\leq$. Here we give a proof of this fact by showing that a variant of $\DYSL$, $\DYSLprime$, is sound and complete with respect to that same class of models. For this we use a countermodel construction that given an underivable sequent produces a finite countermodel, thus resulting in a constructive proof of completeness for $\iSL$.

\paragraph{Acknowledgements.}
We thank Tadeusz Litak and Albert Visser for several enjoyable conversations about Intuitionistic Strong L\"ob Logic and related matters. We further thank the reviewers for their useful remarks for improvement. Support by the Netherlands Organisation for Scientific Research under grant 639.073.807 is gratefully acknowledged.

\end{document}